\numberwithin{equation}{section}
\theoremstyle{plain}
\newtheorem{theorem}{Theorem}[section]
\newtheorem{proposition}[theorem]{Proposition}
\newtheorem{lemma}[theorem]{Lemma}
\theoremstyle{definition}
\newtheorem{definition}{Definition}[section]
\newtheorem{remark} [theorem] {Remark}
\newcommand{\C}{\mathbb{C}}
\newcommand{\Z}{\mathbb{Z}}
\newcommand{\Q}{\mathbb{Q}}
\newcommand{\F}{\mathbb{F}}
\newcommand{\PP}{\mathbb{P}}
\newcommand{\Proj}{\mathbb{P}}
\DeclareMathOperator{\Pf}{Pf}
\DeclareMathOperator{\codim}{codim}
\DeclareMathOperator{\Imago}{Im}
\DeclareMathOperator{\LT}{LT}
\DeclareMathOperator{\rk}{rk}
\DeclareMathOperator{\Sing}{Sing}
\DeclareMathOperator{\Cl}{Cl}
\begin{document}

\title{Sarkisov links for index 1 Fano 3-folds in codimension 4}

\author{Livia Campo}

\address{School of Mathematics \\
	University of Birmingham \\
	Edgbaston \\
	Birmingham, B15 2TT \\
	United Kingdom}

\email{l.campo@bham.ac.uk}

%\subjclass[2010]{Primary: 05C??. Secondary: 05C??}

\keywords{Fano 3-fold, Sarkisov link, Unprojection, Picard rank}

\thanks{The author would like to thank Hamid Ahmadinezhad, Gavin Brown,  
and Miles Reid for the precious conversations and useful comments during the development of this work. The author was supported by EPSRC Doctoral Training Partnership and by EPSRC grant EP/N022513/ held by Alexander Kasprzyk.}

\begin{abstract} 
	We classify Sarkisov links from index 1 Fano 3-folds anticanonically embedded in codimension 4 that start from so-called Type I Tom centres. We apply this to compute the Picard rank of many such Fano 3-folds.
\end{abstract}

\maketitle

\section{Introduction}

The construction of sequences of birational maps linking algebraic varieties to one another is a crucial part in the Minimal Model Program (MMP). 
In the framework of the MMP, the most fundamental such sequences go under the name of Sarkisov links (\cite{Corti95,HaconMcKernan}). In this context, the notions of birational rigidity and pliability for Fano 3-folds, and for Mori fibre spaces (Mfs) more generally, are important, and relate to the uniqueness or otherwise of outputs of the MMP. 
The pliability (see \cite[Definition 1.5(4)]{CortiMella}) is the number of different Mori fibre spaces $W$ that are birational to a given Mfs $X$. If the pliability is 1, then $X$ is said to be birationally rigid; if it is 2 or more, then by \cite{Corti95} it is known that the birational transformation between $X$ and any $W$ can be factorised into a sequence of Sarkisov links. 
The literature often considers Sarkisov links from $X$ according to the codimension of $X$ in its anticanonical embedding. Corti, Pukhlikov, Reid and others (\cite{CPR,CheltsovParkRigidHypersurfaces}) show that quasi-smooth members of the 95 index 1 terminal Fano 3-fold weighted hypersurfaces of \cite{ReidCanonical3folds,IanoFletcher} are birationally rigid. In codimension 2, 19 families of Fano 3-folds are birationally rigid, and 66 are non-rigid (\cite{IskovskikhPukhlikov,OkadaCI,AhmadinezhadZucconiCI}); in codimension 3, Brown and Zucconi prove birational non-rigidity whenever there is a Type I centre \cite{BrownZucconi}. 
Codimension 3 is completed by Ahmadinezhad and Okada \cite{AhmadinezhadOkadaPfaff}, where they prove that an index 1 terminal Fano 3-fold in codimension 3 is birationally rigid if and only if it does not have any Type I or Type II$_1$ centres (this happens for 3 of the 70 Hilbert series). The expectation is that as the codimension increases, rigidity becomes more rare. 

We follow ideas of \cite{CortiMella,BrownZucconi} and we focus on terminal $\Q$-factorial Fano 3-folds in codimension 4 having at least one Type I centre that are listed in the Graded Ring Database \cite{grdb}. 
In particular we examine those deformation families arising from Type I unprojections of pfaffian Fano 3-folds in Tom format (see \cite[Section 3]{T&Jpart1}): we call these \emph{Fano 3-folds of Tom type}.

In our Main Theorem \ref{T outputs of sarkisov links} we give a description of 
birational links for Fano 3-folds of Tom type based on the weights of their ambient space and their basket of singularities, in a similar flavour to the main theorems in \cite{CortiMella,BrownZucconi,CheltsovParkRigidHypersurfaces}. 

Theorem \ref{T outputs of sarkisov links} is is also related to other works in the literature, such as Takagi's \cite{Takagi}, and a comparison with that can be found in Subsection \ref{comparison with Takagi}.
The explicit results are given in detail in \cite{BigTableLinks}. Some important remarks regarding Theorem \ref{T outputs of sarkisov links} are in Section \ref{Setting}.

In Section \ref{section on Pic} we apply Theorem \ref{T outputs of sarkisov links} to compute the Picard rank of some Fano 3-folds in codimension 4.

\section{The Main Theorem} \label{Setting}
We work over the field of complex numbers $\C$. A Fano 3-fold is a normal projective 3-dimensional variety $X$ with ample anticanonical divisor $-K_X$ and at worst terminal singularities.
\begin{definition}
	The \emph{Fano index} of a Fano 3-fold X is defined to be
	\begin{equation*}
	\iota_X \coloneqq \max \{ q \in \Z_{\geq 1} \; : \; -K_X = q A \text{ for some } A \in \Cl(X) \} \; .
	\end{equation*}
\end{definition} 
Our focus will be on those having Fano index $\iota_X = 1$ and codimension 4 in their total anticanonical embedding (cf \cite[Section 1]{AltinokBrownReidK3,T&Jpart1}). A complete description of Type I unprojections is provided in \cite{T&Jpart1}, giving a tool to produce families of Fano 3-folds in codimension 4. 
These realise 115 of the possible Hilbert series, and present at least two distinct deformation families of quasi-smooth Fano 3-folds for each, called \emph{Tom} and \emph{Jerry}. 
By construction, all such Fano varieties have at least one Type I centre. The list of all possibilities for Hilbert series can be found on the Graded Ring Database \cite{grdb}: each is identified with an ID number preceded by \#.%, e.g \#11021.
\begin{definition} \label{Tom type def}
	Let $X$ be a codimension 4 index 1 Fano 3-fold $X$ listed in the table \cite{TJBigTable}. We say $X$ is \textit{of Tom Type} if it is obtained as Type I unprojection of the codimension 3 pair $Z \supset D$ in a Tom family (see \cite{T&Jpart1,PapadakisReidKM} for background and examples; see \ref{unprojection setup} for details). 
	The image of $D \subset Z$ in $X$ is called \textit{Tom centre}: it is a cyclic quotient singularity $p \in X$. In the unprojection setup $D \subset Z$, $D$ is a complete intersection of four linear forms of weight $d_1, \dots, d_4$: we refer to $d_1, \dots, d_4$ as the \textit{ideal weights}. Such $X$ of Tom type is said to be \textit{general} if $Z \supset D$ is general in its Tom family.
\end{definition}

We prove the following main theorem. Here $X$ is a $\Q$-factorial Fano 3-fold. At this stage, we do not assume that $X$ is a Mori fibre space. However, the explicit construction that we carry out shows a posteriori that the endpoint of each birational link described in Theorem \ref{T outputs of sarkisov links} is a Mori fibre space.
\begin{theorem} \label{T outputs of sarkisov links}
	Let $X$ be a general codimension 4 Fano 3-fold of Tom type and let $p \in X$ be a Tom centre. Then:
	
	\begin{enumerate}[label=\textbf{(\Alph*)}]
		\item \label{all links for Tom} $X$ admits a birational link to a Mori fibre space $Y \rightarrow S$. The link is initiated by the Kawamata blow-up of $p \in X$.
		Let $d_1 \geq d_2 \geq d_3 \geq d_4$ be the four ideal weights for the Tom centre $p \in X$. In each case the Kawamata blow-up is followed by an algebraically irreducible flop of finitely many smooth rational curves, and proceeds as follows according to $d_1 \geq d_2 \geq d_3 \geq d_4$:
		\begin{enumerate}[label=\textbf{(\roman*)}]
			\item \label{1>2>3>4} \underline{$d_1>d_2>d_3>d_4$}: a composition of two flips, followed by a divisorial contraction $\Phi'$ of $(2,0)$-type to another (non-isomorphic) Fano 3-fold $X'$;
			\item \label{1>2=3>4} \underline{$d_1>d_2=d_3>d_4$}: a flip (missed in cases \#1218 and \#1413) followed by a divisorial contraction $\Phi'$ of $(2,1)$-type to another Fano 3-fold $X'$;
			\item \label{1=2>3>4} \underline{$d_1=d_2>d_3>d_4$}: two simultaneous flips, followed by a divisorial contraction $\Phi'$ of $(2,0)$-type to another Fano 3-fold $X'$;
			\item \label{1>2>3=4} \underline{$d_1>d_2>d_3=d_4$}: a a composition of two hypersurface flips, followed by a del Pezzo fibration: $\Phi'$ is of $(3,1)$-type;
			\item \label{1=2>3=4} \underline{$d_1=d_2>d_3=d_4$}: two simultaneous flips followed by a del Pezzo fibration: $\Phi'$ of $(3,1)$-type;
			\item \label{1>2=3=4} \underline{$d_1>d_2=d_3=d_4$}: a toric flip (missed in case \#6865) to a conic bundle: $\Phi'$ is of $(3,2)$-type;
			\item \label{1=2=3>4} \underline{$d_1=d_2=d_3>d_4$}: a divisorial contraction $\Phi'$ of $(2,1)$-type to another Fano 3-fold $X'$;
			\item \label{1=2=3=4} \underline{$d_1=d_2=d_3=d_4$}: a conic bundle over a quadric surface in $\Proj^3$: $\Phi'$ is of $(3,2)$-type.
		\end{enumerate}
		\item \label{X not bir rigid, X not iso to X'} In every birational link in \ref{all links for Tom}, the resulting Mfs $Y \rightarrow S$ is not isomorphic to $X$. 
		
		\item \label{if Pic=1 then X bir rigid} If in addition the Picard rank of $X$ is $\rho_X=1$, then the link produced in \ref{all links for Tom} is a Sarkisov link, and so $X$ is not birationally rigid.
	\end{enumerate}
\end{theorem}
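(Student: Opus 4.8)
The plan is to construct the link explicitly by a $2$-ray game inside a rank-$2$ toric ambient space, in the spirit of \cite{BrownZucconi,CortiMella}, exploiting the Tom format to keep everything computable. First I would start from the Type I unprojection underlying Definition \ref{Tom type def}: $X$ is anticanonically embedded in a weighted projective space $\PP(a_0,\dots,a_7)$ and arises from the codimension $3$ pfaffian Fano $Z\subset\PP(a_0,\dots,a_6)$ with $Z\supset D$ in a Tom family, the Tom centre $p\in X$ being the image of $D$ and a cyclic quotient singularity. The structural input is that the Kawamata blow-up $\sigma\colon\widetilde X\to X$ of $p$ is a weighted blow-up which, at the level of the ambient spaces, is the toric blow-up of $\PP(a_0,\dots,a_7)$ along the coordinate point of $p$: it produces a projective simplicial toric variety $T$ with $\rk\Cl(T)=2$ into which $\widetilde X$ embeds, and one checks $-K_{\widetilde X}=\sigma^{*}(-K_X)-aE$ is nef but not ample. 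One extremal contraction of $T$ reproduces $\sigma$; the other initiates the link.

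Second, I would run the toric $2$-ray game on $T$. Its cone of movable divisors decomposes into finitely many nef chambers, giving a chain of toric flips $T=T_0\dashrightarrow T_1\dashrightarrow\cdots\dashrightarrow T_k$ that ends in a toric Mori fibration $T_k\to S$; the number and types of these flips and the type of the final contraction --- divisorial, del Pezzo fibration, or conic bundle --- are dictated purely by the pattern of equalities among the ideal weights $d_1\ge d_2\ge d_3\ge d_4$ (there are exactly eight such patterns, matching \ref{1>2>3>4}--\ref{1=2=3=4}). Then I would show this toric game restricts to $\widetilde X$: on the strict transform the game begins with an algebraically irreducible flop of finitely many curves, which generality of the Tom family forces to be smooth and rational; each later toric flip restricts to a flip (or an isomorphism) on $\widetilde X$; and $T_k\to S$ restricts to a Mori fibre space $Y\to S$. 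Concatenating with $\sigma$ yields the link of \ref{all links for Tom}, and the same computation records, in the divisorial cases \ref{1>2>3>4}, \ref{1>2=3>4}, \ref{1=2>3>4}, \ref{1=2=3>4}, the anticanonical degree and the singularity basket of the target Fano $X'$.

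The main obstacle is precisely this restriction step, and it is where generality is essential. One must check, family by family, that $\widetilde X$ meets each relevant toric stratum in the expected dimension, so that a toric flip genuinely restricts to a flip or flop rather than contracting a divisor or acting trivially on $\widetilde X$; that no non-terminal or non-$\Q$-factorial point is introduced along the chain, so that quasi-smoothness and terminality are propagated through the unprojection and the blow-up; and that the degenerations flagged in the statement --- the missing flip in \#1218 and \#1413, and the missing toric flip in \#6865 --- are correctly diagnosed. I expect this bookkeeping, rather than any single conceptual point, to be the bulk of the argument.

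Finally, \ref{X not bir rigid, X not iso to X'} and \ref{if Pic=1 then X bir rigid}. In the fibration cases \ref{1>2>3=4}, \ref{1=2>3=4}, \ref{1>2=3=4}, \ref{1=2=3=4} the base $S$ is positive-dimensional, so $Y\to S$ is not isomorphic to the Mori fibre space $X\to\{\mathrm{pt}\}$; in the divisorial cases one compares the invariants of $X'$ computed above with those of $X$ recorded in \cite{grdb} and verifies, case by case, that $X'\not\cong X$ --- typically because $(-K_{X'})^{3}\ne(-K_X)^{3}$ or the baskets of quotient singularities differ, with a finer invariant settling any residual coincidence. For \ref{if Pic=1 then X bir rigid}: when $\rho_X=1$, the variety $X$ is itself a Mori fibre space over a point, the Kawamata blow-up raises its Picard rank to $2$, every flip and flop in the game preserves it, and the terminal contraction either returns it to $1$ (with $X'$ a Fano of Picard rank $1$) or yields $\rho_Y=2$ over $\rho_S=1$; hence the whole chain is a Sarkisov link of type I, II, III, or IV. By \ref{X not bir rigid, X not iso to X'} its output is not isomorphic to $X$, so the pliability of $X$ is at least $2$ and $X$ is not birationally rigid.
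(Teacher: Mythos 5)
Your strategy is essentially the paper's: initiate with the Kawamata blow-up, embed the blown-up 3-fold in a rank-2 toric variety, run the toric 2-ray game, and check that it restricts to a flop, a chain of flips (or isomorphisms in codimension 1), and a final divisorial contraction or fibration, with the eight cases organised by $d_1\ge d_2\ge d_3\ge d_4$; parts (B) and (C) then follow from tracking invariants through the link.

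Two places where your sketch glosses over ideas the paper genuinely needs. First, the rank-2 ambient is not canonically ``the'' toric blow-up of $w\Proj^7$ at $P_s$: the Kawamata weights determine only the gradings of $t,x_1,x_2,x_3$, and the bottom weights $\delta_j$ of the unprojection variables must be chosen so that the unprojection equations keep $sy_j$ as leading term and the saturated ideal of $Y_1$ actually follows the toric game. The paper proves $\delta_j=r+d_j$ (Proposition \ref{shape of F tom}) via Lemma \ref{existence of mon tom}, i.e.\ the existence of pure monomials in $x_1,x_2,x_3$ in each unprojection equation, which is a consequence of the Tom format through Papadakis' algorithm; without this the assertion that the chamber structure --- and hence your eight-case division --- is ``dictated purely by the pattern of equalities among the $d_j$'' does not follow. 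Second, whether a toric flip restricts to a genuine flip on $Y_i$ is decided not by family-by-family dimension counts but by the weight configurations \ref{case A}/\ref{case B} of the syzygy matrix: configuration \ref{case B} forces a pure power $y_j^2$ into the saturated equations, so the corresponding flip restricts to an isomorphism (Proposition \ref{skipped flip in >>>}), and this is exactly how \#1218, \#1413 and \#6865 are diagnosed. Finally, for part (B) the paper argues uniformly rather than case by case: it tracks the basket and shows the blown-up cyclic quotient singularity can never be re-created by the flips, and in the fibration cases it compares Picard ranks --- note that your remark that $Y\rightarrow S$ and $X\rightarrow\{\mathrm{pt}\}$ are different Mori fibre space structures does not by itself exclude $Y\cong X$ as varieties. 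Your computational route (comparing $(-K)^3$ and baskets against the database) would also work given the explicit classification, but it is a different and heavier mechanism than the one used in the paper.
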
 
The notation on fibrations and divisorial contractions in the above theorem is: $(m,n)$ where $m$ is the dimension of the exceptional locus of $\Phi'$ in $Y$ (where applicable) and $n$ is the dimension of its image. For instance, if $\Phi'$ is of $(2,0)$-type it contracts a $w\PP^2$ to a point in $X'$.

Note that the flip in case \ref{1=2>3>4} and \ref{1=2>3=4} is algebraically irreducible: in this situation, we say that we have two \textit{simultaneous flips}. We expand this in Remark \ref{simultaneous flips}. Moreover, Theorem \ref{T outputs of sarkisov links} does not apply to the Fano 3-folds considered in \cite{brownP2xP2} (see Remark \ref{on P2xP2}).

\section{The input data}

\subsection{Construction and notation} \label{Construction}

The Definition \ref{def Sarkisov link} of Sarkisov link stems from the notion of 2-ray game, especially in the context of toric varieties (see \cite{BrownZucconi} for a description in terms of graded rings). 
A birational link for a codimension 4 Fano 3-fold $X \ni p$ is partially subject to the behaviour of the link for its ambient space $w\Proj^7 \supset X$. 
Call $x_1,x_2,x_3,y_1,y_2,y_3,y_4,s$ the coordinates of $w\Proj^7 = \Proj^7(a,b,c,d_1,d_2,d_3,d_4,r)$, and suppose to blow up the cyclic quotient singularity at $p=P_s \in w\Proj^7$. Call this blow-up $\F_1$: this is a rank 2 toric variety whose bi-grading defining the $\C^\times \times \C^\times$ action on $\F_1$ we will deduce below. 
In toric geometry (cf \cite{CoxToricVarieties}), this corresponds to adding a new lattice vector $\rho_t$ to the 1-skeleton of $w\Proj^7$ given by the 
lattice vectors $\rho_s, \rho_{x_i}, \rho_{y_j}$ that satisfy the relation
\begin{equation*}
r \rho_s + a\rho_{x_1} + b\rho_{x_2} + c\rho_{x_3} + \sum_{j=1}^{4} d_j \rho_{y_j} = 0 \; .
\end{equation*}
The new vector $\rho_t$ must be inside the fan constituted by the convex cone $\sigma_s \coloneqq \langle\rho_{x_1}, \rho_{x_2}, \rho_{x_3},$ $\rho_{y_1}, \rho_{y_2}, \rho_{y_3}, \rho_{y_4}\rangle$; that is, an integer multiple of $\omega \rho_t$ of $\rho_t$ is the integer positive sum of all rays other than $\rho_s$: there are many possible choices to choose the coefficients for this positive sum, and we will identify a particular one. For $\omega$, $\omega_i$, $\delta_j > 0$ and $i \in \{1,2,3\}$, $j \in \{1,2,3,4\}$, the relation involving $\rho_t$ is
\begin{equation}
-\omega \rho_t + \sum_{i=1}^{4} \omega_i\rho_{x_i} + \sum_{j=1}^{4} \delta_j \rho_{y_j} = 0 \; .
\end{equation}
In other words, $\F_1$ is the variety with Cox ring $\C[t, s, x_1, x_2, x_3, y_1, y_2, y_3, y_4]$ having the grading and the irrelevant ideal shown below. In the language of the graded Cox rings, the bottom weights of the bi-grading of $\F_1$ are the coefficient of the rays in the definition of $\rho_t$. Since $\rho_s$ does not appear in the expression for $\rho_t$, its bottom weight is 0. Thus the bi-grading of $\F_1$ looks like
\begin{equation} \label{non well formed scroll}
\left(
\begin{array}{c c | c c c c c c c}
t & s & x_1 & x_2 & x_3 & y_1 & y_2 & y_3 & y_4 \\
0 & r & a & b & c & d_1 & d_2 & d_3 & d_4 \\
-\omega & 0 & \omega_1 & \omega_2 & \omega_3 & \delta_1 & \delta_2 & \delta_3 & \delta_4
\end{array}
\right)
\end{equation}
and its irrelevant ideal is $(t,s) \cap (x_1,x_2,x_3,y_1,y_2,y_3,y_4)$, as indicated by the vertical bar between $s$ and $x_1$. We will determine the values of the bottom weights $\omega, \omega_1,\omega_2,\omega_3, \delta_1, \dots, \delta_4$ later in this Section (also refer to the appendix of \cite{BCZ} for further details). Note that this is not well-formed: we come back to this later.

The 2-ray game for $w\Proj^7$ is determined by the ray-chamber structure of the Mori cone of $\F_1$. Each bi-degree in \eqref{non well formed scroll} represent a character of the $\C^* \times \C^*$ action on $\F_1$ which correspond to the rays $\rho_s, \rho_{x_i}, \rho_{y_j}$; these in particular represent the linear systems associated to each of the coordinates of $w\Proj^7$. This induces the ray-chamber structure in Figure \ref{Mori cone basic}.
\begin{figure}[htb!] 
	\begin{tikzpicture}
	\draw[thick] (0,0) -- (0,1.5);
	\node [above left] at (0,1.5) {$t$};
	
	\draw[thick] (0,0) -- (1,1.5);
	\node [above left] at (1,1.5) {$s$};
	
	\draw[thick] (0,0) -- (1.75,1.25);
	\node [above right] at (1.75,1.25) {$x_1$};
	
	\draw [fill] (1.8,0.3) circle [radius=0.02];
	\draw [fill] (2,0.3) circle [radius=0.02];
	\draw [fill] (2.2,0.3) circle [radius=0.02];

	\draw[thick] (0,0) -- (2.1,-0.5);
	\node [below right] at (2.1,-0.5) {$y_3$};
	
	\draw[thick] (0,0) -- (1.5,-1);
	\node [below right] at (1.5,-1) {$y_4$};
	\end{tikzpicture}
	\caption{The Mori cone of $\F_1$} \label{Mori cone basic}
\end{figure}
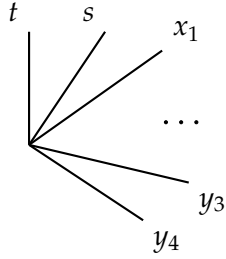

The variation of GIT on $\F_1$ corresponds to the wall-crossing in the picture above. This induces a 2-ray game for $w\Proj^7 \ni p$. Given $X \ni p$ of a Tom-type Fano 3-fold and of a Type I centre $p \in X \subset w\Proj^7$,  we want to embed the 2-ray game for $X \ni p$ into the 2-ray game for $w\Proj^7, \ni p$: this is achieved by finding the appropriate weights $\omega, \omega_1,\omega_2,\omega_3, \delta_1, \dots, \delta_4$ for the grading of the toric variety $\F_1$. %In our setting, this is a Sarkisov link for $(X,p)$.

The objects of the \textit{Mori category} are projective, $\Q$-factorial terminal 3-folds. 
The Fano 3-folds of Definition \ref{Tom type def} are in the Mori category.

\begin{definition} \label{def Sarkisov link}
	A \textit{Sarkisov link} for $X \ni p$ is a birational map between the Mori fibre spaces $X \rightarrow S$ and $X' \rightarrow S'$ that factors as
	\begin{figure}[htb!]	
		\begin{equation} \label{Sarkisov link notation}
		\xymatrix@C=15pt{
			& Y_1 \ar[dl]_\Phi \ar[dr]^{\alpha_1} \ar@{-->}[rr]^{\Psi_1} & & Y_2 \ar[dl]_{\beta_1} \ar[dr]^{\alpha_2} \ar@{-->}[rr]^{\Psi_2} & & Y_3 \ar[dl]_{\beta_2} \ar[dr]^{\alpha_3} \ar@{-->}[rr]^{\Psi_3} & & Y_4 \ar[dl]_{\beta_3} \ar[dr]^{\Phi '} &  \\
			w\Proj \supset X & & Z = Z_1 & & Z_2 & & Z_3 & & X' \subset w\Proj'
		}
		\end{equation} 
	\end{figure}
	
	The birational maps $\Psi_1,\Psi_2,\Psi_3$ are isomorphisms in codimension 1, that is, antiflips, flops, flips in this order (cf \cite[Remark 3.5]{BlancCheltsovDuncanProkhorov}). The map $\Phi$ is a divisorial extraction, and $\Phi'$ can be either a divisorial contraction or a fibration (del Pezzo fibration or conic bundle, in which case the second Mori fibre space is $Y_4 \rightarrow w\Proj'$). Call $\mathbb{G}_i$ the image of $\alpha_i$ (or $\beta_i$), and $Z_i$ the image of $\alpha_i$ restricted to $Y_i$; this is the same as the image of the restriction of $\beta_i$ to $Y_{i+1}$. A Sarkisov link takes place in the Mori category if it satisfies the properties listed in Definition 2.2 of \cite{BrownZucconi}.
\end{definition}
This sets our nomenclature. By a little abuse of notation, we call the coordinates of each $\F_i$ in the same way. Following \cite{BrownZucconi}, each ray of the ray-chamber structure is associated to the linear system defined by the bi-degree of the variable(s) generating it and induces a map of toric varieties. 
Each ray corresponds to one of the toric varieties in the bottom row of the 2-ray game (the ambient spaces of the $Z_i$) in \eqref{Sarkisov link notation}, while each chamber corresponds to one of the $\F_i$ at the top row of \eqref{Sarkisov link notation}. Transitioning from one chamber to another adjacent chamber performs the isomorphism $\Psi_i \colon \F_i \rightarrow \F_{i+1}$ in codimension 1. Approaching the ray in between the two chambers from one side or another indicates the two maps $\alpha_i \colon \F_i \rightarrow \mathbb{G}_i$ and $\beta_i \colon \F_{i+1} \rightarrow \mathbb{G}_i$ (defined by the same linear system). In the language of Geometric Invariant Theory, this is a variation of GIT quotient on $\F_1$.

\subsection{The unprojection setup: construction of $X$} \label{unprojection setup}

The starting point to construct $X$ is the following type of data, coming from \cite{T&Jpart1,grdb}.

\begin{itemize}
	\item A fixed projective plane $D \coloneqq \Proj^2(a,b,c) \subset \Proj^6(a,b,c, d_1, \dots, d_4)$ with coordinates $x_1, x_2, x_3,$ $y_1, \dots,y_4$ respectively and $d_1 \geq d_2 \geq d_3 \geq d_4$. So $D$ is defined by the ideal $I_D \coloneqq \Span[y_1,y_2,y_3,y_4]$.
	\item A family $\mathcal{Z}_1$ of codimension 3 Fano 3-folds $Z \subset w\Proj^6$, each defined by maximal pfaffians of a skew-symmetric $5 \times 5$ syzygy matrix $M$ whose entries $(a_{i,j})$ have weights 
	\begin{equation*}
	\left(
	\begin{array}{c c c c}
	m_{1,2} & m_{1,3} & m_{1,4} & m_{1,5} \\
	& m_{2,3} & m_{2,4} & m_{2,5} \\
	& & m_{3,4} & m_{3,5} \\
	& & & m_{4,5}
	\end{array}
	\right) \; .
	\end{equation*}
\end{itemize}
Here we use the notation of \cite{T&Jpart1} for skew-symmetric matrices: we omit the principal diagonal, whose entries are all zero, and the lower-left triangle, which is the symmetric of the upper-right triangle with opposite signs. The matrix $M$ is graded, i.e. ~each of its entries is occupied by a polynomial in the given degree. A list of the grading of $M$ is in \cite{TJBigTable}.

The plane is a divisor $D \cong \Proj^2_{x_1,x_2,x_3} (a,b,c)$ of $Z_1 \in \mathcal{Z}_1$ if the equations of the latter are the maximal pfaffians of a matrix $M$ in either Tom or Jerry format.

\begin{definition}[\cite{T&Jpart1}, Definition 2.2]\label{Tom definition}
	A $5 \times 5$ skew-symmetric matrix $M$ is in Tom$_k$ format if and only if each entry $a_{i,j}$ for $i,j \neq k$ is in the ideal $I_D$.
\end{definition}

Not all possible formats can be realised: \cite{TJBigTable} records exactly the data of $D \subset w\Proj^6$, the weights of the syzygy matrix, and the successful formats (and why the others fail). Each of them corresponds to a distinct deformation family of $Z_1$ (cf \cite{T&Jpart1}): there can be at most four different deformation families, with at most two realised as Tom formats, and at most two realised as Jerry formats. In this paper we only focus on the Tom case: the aim is to construct $M$ in this general setting by filling its entries with homogeneous polynomials in the $x_i$ and $y_j$ subject to the Tom constraints. 
It is often possible to place some of the variables in a matrix position having the same degree. The following lemma highlights a key feature of $M$, that is, the presence of certain quasilinear monomials in the ideal variables. It is a direct observation on the weights $m_{k,l}$ of $M$. By generality of $Z_1$, $y_j$ and $x_j$ appear linearly in suitable entries. This is in a similar spirit to \cite[Section 3]{BrownZucconi}.

\begin{lemma} \label{quasilinearity of entries}
	Let $Z_1 \supset D$ be a general member of a Tom$_i$ family in \cite{TJBigTable} where $i \in \{ 1, \dots, 5\}$. Then there are at least three entries $a_{k,l}$ of $M$  with $k \not = i$, $l \not = i$ such that $d_j = m_{k,l}$, that is, $y_j$ appears linearly in  $a_{k,l}$. Except for \#12960 in \cite{grdb}, there is an entry $a_{k,l}$ of $M$ with $k = i$ or $l = i$ such that $m_{k,l}$ is equal to $a,b$, or $c$, i.e. linear in at least one of the orbinates $x_j$.
\end{lemma}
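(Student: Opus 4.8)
The plan is to prove Lemma~\ref{quasilinearity of entries} by a direct, case-free analysis of the numerical data attached to each Tom family, exploiting the rigidity imposed by the unprojection numerology. First I would fix notation: for a Tom$_i$ family with ambient $\Proj^6(a,b,c,d_1,\dots,d_4)$ and syzygy matrix $M=(a_{k,l})$ of weights $m_{k,l}$, recall that the maximal pfaffians cut out a codimension~3 Fano $Z_1$ with $-K_{Z_1}=\mathcal{O}(1)$; the standard pfaffian adjunction formula gives $\sum_{k<l} m_{k,l} = a+b+c+\sum_j d_j + (\text{canonical shift})$, and more importantly there are the pfaffian grading constraints $m_{k,l}+m_{p,q}=m_{k,p}+m_{l,q}=\dots$ coming from the fact that the $4\times 4$ pfaffians have a fixed common weight. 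These linear relations on the $m_{k,l}$, together with the Tom$_i$ condition that every $a_{k,l}$ with $k,l\neq i$ lies in $I_D=(y_1,y_2,y_3,y_4)$, force the six ``off-$i$'' entries $m_{k,l}$ ($k,l\neq i$) to be expressible through $a,b,c,d_1,\dots,d_4$ in a way that makes at least three of them coincide with one of the $d_j$.

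The core combinatorial step is this: the submatrix of $M$ obtained by deleting row and column $i$ is a $4\times 4$ skew-symmetric matrix with six entries $a_{k,l}$, all in $I_D$; its single pfaffian is one of the five equations of $Z_1$, and since $Z_1$ is Fano of index~1 that pfaffian has weight equal to $\sum_j d_j + a+b+c - m_{1,i}-\dots$ — I would extract the precise value from the grading table \cite{TJBigTable}. Being a sum of products of pairs of the six off-$i$ entries, and lying in $I_D^2 \subset$ the coordinate ring, this pfaffian must have weight at least $2\min_j d_j$; combined with the general-position requirement (so that $D\subset Z_1$ is genuinely cut out and $Z_1$ is quasi-smooth), one checks that not all six off-$i$ weights can exceed all the $d_j$, and in fact at least three must equal some $d_j$ exactly — otherwise the entries of the $4\times 4$ block would be forced to be nonlinear in every $y_j$, and the resulting $Z_1$ would fail quasi-smoothness along $D$ (no $y_j$ appears to first order), contradicting the classification in \cite{TJBigTable}. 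This is where I expect the real work to be: it is essentially a finite check that the $115$ Hilbert series in \cite{grdb}, with their tabulated syzygy weights, all satisfy the stated inequality, but it should be packaged as a uniform argument from (a) the pfaffian weight relations and (b) quasi-smoothness of a general $Z_1$, rather than $115$ separate verifications.

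For the second assertion — that (except for \#12960) some entry $a_{k,l}$ with $k=i$ or $l=i$ has weight in $\{a,b,c\}$ — the argument is dual: the four entries $a_{i,l}$ ($l\neq i$) are unconstrained by the Tom condition and hence, for a general member, are filled as generally as their weights allow; quasi-smoothness of $Z_1$ (and, ultimately, of the unprojected $X$) at the coordinate points $P_{x_1},P_{x_2},P_{x_3}$ requires that each $x_j$ appear to first order somewhere in the equations, and since the off-$i$ block lies in $I_D$ it cannot supply an $x_j$-linear term at those points; therefore the $i$-th row/column must contain an entry of weight $a$, $b$, or $c$. The exception \#12960 is precisely the family where the numerics make all four $i$-th entries of weight strictly larger than $\max\{a,b,c\}$ while quasi-smoothness at $P_{x_j}$ is nevertheless achieved by the pfaffian structure itself (or by one of the $y_j$-variables); I would simply point to its entry in \cite{TJBigTable} as the single tabulated counterexample. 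The main obstacle throughout is organizing the finite case analysis of \cite{TJBigTable} into the two clean numerical statements above; once the pfaffian weight identities and the quasi-smoothness constraints are written down, the inequalities ``$\geq 3$ entries equal a $d_j$'' and ``an $i$-entry of weight $a,b$, or $c$'' follow, with \#12960 flagged by hand.
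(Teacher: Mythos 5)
Your bottom line (a finite verification over the classified families, with generality ensuring that a variable actually appears linearly whenever the weights permit it) is in fact all the paper does: the lemma is introduced there as ``a direct observation on the weights $m_{k,l}$ of $M$'' read off from the tables in \cite{TJBigTable}, so in that sense your fallback coincides with the paper's argument. The problem is the uniform packaging you want to wrap around it, which contains steps that would fail. First, you invoke quasi-smoothness of $Z_1$ along $D$, but $Z_1$ is never quasi-smooth along $D$: by construction of a Type I unprojection it acquires finitely many nodes on $D$, and the genuine constraints are ``only nodes on $D$'' plus quasi-smoothness of the unprojected $X$. Second, even with the correct constraint, nodality along $D$ only forces the $y_j$ to appear to first order with coefficients that are not identically zero, i.e.\ it only forces $m_{k,l}\geq d_j$ for suitable entries (a term like $x_1y_j$ in an entry of weight $d_j+1$ serves the Jacobian just as well); it cannot force the exact equality $m_{k,l}=d_j$ that the lemma asserts, and it certainly does not produce the count ``at least three''. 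So the claimed implication ``otherwise no $y_j$ appears to first order'' does not establish the numerical statement, and the real content remains the inspection of the weight matrices.

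The same objection applies to your dual argument for the orbinates: quasi-smoothness at the coordinate points $P_{x_j}\in D\subset Z_1$ (or of $X$ there) is governed by which monomials of the form $x_j^m\cdot(\text{variable})$ occur in the equations, not by the existence of a matrix entry of weight exactly $a$, $b$ or $c$ in row or column $i$; conflating ``$x_j$ appears to first order somewhere'' with ``some $a_{i,l}$ has weight in $\{a,b,c\}$'' is the same gap as above. Indeed the exception \#12960 in \cite{grdb} already shows that no unconditional uniform derivation of the second assertion can exist, so any such argument must at some point return to the tabulated data. In short: state the lemma as what it is — a case check against the weight tables of \cite{TJBigTable}, with generality upgrading ``weight matches'' to ``variable appears linearly with nonzero coefficient'' — and drop the quasi-smoothness-along-$D$ reasoning, which as formulated is incorrect.
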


Once this set-up in codimension 3 is done and the equations of $Z_1$ are found, the unprojection of $Z_1$ at the divisor $D$ is a birational transformation that produces a new Fano 3-fold $X \subset w\Proj^7$ in codimension 4. In particular, $X$ inherits the five pfaffian equations of $Z_1$ and gains four extra equations from the unprojection process, to which we will refer as unprojection equations in the rest of this paper. The unprojection equations are of the form $s y_i = g_i(x_1,x_2,x_3,y_1,y_2,y_3,y_4)$ where $s$ is the additional coordinate of $w\Proj^7$ and the right-hand side is a homogeneous polynomial of the same degree as $s y_i$. In the unprojection the divisor $D \subset Z_1$ is contracted to the Type I centre $P_s \in X$. In this paper we study birational links from $X \ni P_s$. In Appendix \ref{appendix} we present a brief summary about the explicit construction of the unprojection equations based on \cite{PapadakisComplexes}.

\subsection{The bi-grading of $\F_1$}

Consider $X \ni P_s$. To perform a blow-up $\Phi \colon \F_1 \rightarrow w\Proj^7$ at $P_s$ we choose a suitable grading $\omega, \omega_1,\omega_2,\omega_3, \delta_1, \delta_2, \delta_3, \delta_4$ for $\F_1$ in \eqref{non well formed scroll}. We follow a similar method to \cite{AhmadinezhadZucconiCircles}. Recall the following theorem.
\begin{theorem}[Kawamata blow-up, \cite{kawamata1996divisorial}] \label{kawamata thm}
	Let $X$ be a 3-fold, and $p \in X$ a terminal cyclic quotient singularity $\tfrac{1}{r}(a,b,c)$. Suppose that $\phi \colon (E\subset Y) \rightarrow (\Gamma \subset X)$ is a divisorial contraction with $p \in \Gamma$ and $Y$ terminal. Then, $\Gamma = \{p\}$ and $\phi$ is the weighted blow-up of $p$ with weights $(a,b,c)$ and therefore the exceptional divisor is $E \cong \Proj(a,b,c)$.
\end{theorem}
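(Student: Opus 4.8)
The plan is to reduce everything to the local analytic model and then invoke the classification of three-dimensional terminal cyclic quotient singularities. On an analytic neighbourhood of $p$ we may assume $X=\A^3/\mu_r$ with $\mu_r$ acting by $\tfrac1r(a,b,c)$; by the classification of terminal cyclic quotient singularities in dimension $3$, after reordering the type is $\tfrac1r(1,e,r-e)$ with $\gcd(e,r)=1$ (so in particular $\Proj(a,b,c)\cong\Proj(1,e,r-e)$). Since a divisorial contraction with centre a fixed point is determined on such a neighbourhood, the theorem reduces to showing there is exactly one divisorial contraction $\phi\colon(E\subset Y)\to(\Gamma\subset X)$ with $Y$ terminal and $p\in\Gamma$, namely the weighted blow-up of $p$ with weights $(1,e,r-e)$, whose exceptional divisor is $\Proj(1,e,r-e)$.

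I would first pin down the discrepancy $a(E,X)$ and $\dim\Gamma$. Because $rK_X$ is Cartier near $p$, the discrepancy of any divisor over $p$ lies in $\tfrac1r\Z_{>0}$, and a direct toric computation shows the \emph{minimal} discrepancy over $p$ is $\tfrac1r$, attained \emph{only} by the weighted blow-up $W\to X$ with weights $(1,e,r-e)$: this $W$ is terminal, its exceptional divisor $E_0\cong\Proj(1,e,r-e)$ satisfies $-E_0$ relatively ample, and $a(E_0,X)=\tfrac1r$, so at least one contraction as in the statement exists. To show an arbitrary such $\phi$ extracts exactly this divisor I would use the general elephant: a general $D\in|{-}K_X|$ near $p$ has a Du Val singularity (of type $A_{r-1}$) at $p$ by Reid's theorem, so $(X,D)$ is canonical; writing $K_Y+D_Y=\phi^*(K_X+D)+(a(E,X)-m)E$ with $m=\operatorname{mult}_E\phi^*D$, and combining $a(E;X,D)=a(E,X)-m\ge 0$ with the $\phi$-negativity of $E$ and the terminality of $Y$, one forces $a(E,X)=\tfrac1r$. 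In particular $a(E,X)=\tfrac1r<1$, so $\Gamma$ cannot be a curve (a divisor contracted to a curve in a $3$-fold has discrepancy $\ge 1$) and $\Gamma=\{p\}$.

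The decisive step is then the \emph{rigidity}: any divisorial contraction to $(p\in X)$ with terminal total space and discrepancy $\tfrac1r$ coincides with the toric one. For this I would pass to the index-one cover $\pi\colon\A^3\to X$ and take the normalised base change, obtaining a $\mu_r$-equivariant birational morphism $\widehat\phi\colon\widehat Y\to\A^3$ over the smooth point $0$; since $\widehat Y$ is a cyclic cover of the terminal $Y$ branched along $E$, it has at worst klt singularities. Divisorial contractions over a smooth $3$-fold point with such singularities are weighted blow-ups, so $\widehat\phi$ is the weighted blow-up of $0$ with some weights $(p_1,p_2,p_3)$; descending, $\phi$ is the weighted blow-up of $p$ with weights $\tfrac1r(p_1,p_2,p_3)$, and imposing that $Y=\widehat Y/\mu_r$ be terminal along $E=\Proj(p_1,p_2,p_3)/\mu_r$ forces, by the arithmetic underlying the terminal lemma, $(p_1,p_2,p_3)\equiv(1,e,r-e)\pmod r$ taken with minimal positive representatives, i.e.\ $(p_1,p_2,p_3)=(1,e,r-e)$. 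Hence $Y=W$, $E\cong\Proj(a,b,c)$, and $\phi$ is the weighted blow-up with weights $(a,b,c)$, as asserted.

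The main obstacle is exactly this rigidity step: one must control the singularities of the base change $\widehat Y$, justify the classification of divisorial contractions over a smooth point in the relevant singularity class, and carry out the finite but delicate arithmetic selecting the weights $(1,e,r-e)$; the discrepancy computation in the previous step, while more routine, still genuinely uses the general elephant and careful bookkeeping for the pair $(X,D)$. Of course, all of this is Kawamata's; the above merely reconstructs the shape of his argument.
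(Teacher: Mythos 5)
The paper does not prove this statement at all: it is quoted from \cite{kawamata1996divisorial} and used as a black box, so there is no internal proof to compare with, and your sketch can only be measured against Kawamata's actual argument. Your skeleton is close in spirit: reduce to the analytic model $\tfrac{1}{r}(1,e,r-e)$; note that the minimal discrepancy over $p$ is $\tfrac{1}{r}$ and is attained only by the weighted blow-up; and, once $a(E,X)=\tfrac{1}{r}<1$ is known, rule out $\dim\Gamma=1$ because the singularity is isolated near $p$, so a divisor contracted to a curve would have discrepancy at least $1$. But both decisive steps are gapped.

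First, the general-elephant step does not deliver $a(E,X)=\tfrac{1}{r}$. The pair $(X,D)$ is indeed canonical, but the inequality you extract, $a(E;X,D)=a(E,X)-m\ge 0$, is a \emph{lower} bound $a(E,X)\ge m\ge\tfrac{1}{r}$; what is needed is the \emph{upper} bound $a(E,X)\le\tfrac{1}{r}$, and ``combining with $\phi$-negativity of $E$ and terminality of $Y$'' is not an argument --- how terminality of $Y$ enters is precisely the missing content. Kawamata obtains the equality without any elephant, by playing the unique discrepancy-$\tfrac{1}{r}$ valuation $F_0$ against $E$: the centre of $F_0$ on $Y$ lies in $\phi^{-1}(p)\subset E$, and computing its discrepancy over $X$ through $Y$, $a(F_0,X)=a(F_0,Y)+c\,a(E,X)$ with $c>0$ and $a(F_0,Y)>0$ by terminality of $Y$, forces $F_0=E$ and hence $a(E,X)=\tfrac{1}{r}$. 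Second, the rigidity step is simultaneously too strong and unnecessary. The classification of divisorial contractions over a smooth $3$-fold point with terminal source is Kawakita's theorem, which postdates and is far deeper than the 1996 statement being proved, and in the klt generality you invoke it is false: there are divisorial valuations over a smooth point, not monomial in any analytic coordinates, whose anti-ample extraction has only klt singularities. Moreover, the klt-ness of $\widehat{Y}$ is itself unjustified, since $\widehat{Y}\to Y$ is in general branched along the divisor $E$, so it is not \'etale in codimension one and the singularities of $Y$ do not transfer for free. None of this machinery is needed: once $E$ is identified with the weighted-blow-up valuation, both $Y$ and the Kawamata blow-up $W$ are the anti-ample extraction of that single valuation, i.e.\ $Y\cong\operatorname{Proj}_X\bigoplus_{m\ge 0}\phi_*\mathcal{O}_Y(-mE)\cong W$ over $X$, which identifies $\phi$ with the weighted blow-up and gives $E\cong\Proj(a,b,c)$.
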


The blow-up map $\Phi$ is defined by the linear system $\left| \mathcal{O} \big( \substack{\scriptscriptstyle{1}\\\scriptscriptstyle{0}} \big) \right|$. Explicitly,
\begin{align*}
\Phi \; \colon \; \F_1 &\longrightarrow w\Proj^7 \\
\left(t,s, x_1, x_2, x_3, y_1, y_2, y_3, y_4\right) &\longmapsto \left( t^{\tfrac{\omega_1}{\omega}}x_1, t^{\tfrac{\omega_2}{\omega}}x_2, t^{\tfrac{\omega_3}{\omega}}x_3, t^{\tfrac{\delta_1}{\omega}}y_1, t^{\tfrac{\delta_2}{\omega}}y_2, t^{\tfrac{\delta_3}{\omega}}y_3, t^{\tfrac{\delta_4}{\omega}}y_4, s \right) \; .
\end{align*}

The blown-up point is the cyclic quotient singularity of index $r$ at $P_s$, so $\omega = r$. In \cite{BrownZucconi} it is shown that the exceptional locus $E$ of $\Phi$ is given by the vanishing of the coordinates $y_1,y_2,y_3,y_4$; thus, $E \cong \Proj^2(\omega_1,\omega_2,\omega_3)$. In order for $\Phi$ to be a Kawamata blow-up the weights $\omega_1, \omega_2, \omega_3$ must be $a, b, c$ respectively.

The equations of $X$ come into play to determine the value of the $\delta_j$. The key point is the definition of the Fano 3-fold $Y_1$. In the pull-back $\Phi^*(X)$, every monomial in each equation of $X$ picks up a suitable power of $t$. We aim at defining $Y_1$ as the saturation over $t$ of the total pull-back of $X$ (see Definition \ref{def of Y1}). On the other hand, we want to embed the link for $(X,P_s)$ into the link for $(w\Proj^7,P_s)$ in such a way that the birational transformations to which $\F_1$ is subject restrict to $Y_1$. Thus, we want the leading terms of the unprojection equations to be $s y_j$, as opposed to $s y_j t^\tau$ for an exponent $\tau >1$. We give a constructive definition of the $\delta_j$ starting with $\delta_4$: the analysis for $\delta_1$, $\delta_2$, and $\delta_3$ is done analogously. The fourth unprojection equation is of the form $sy_4 = g_4(x_1, x_2, x_3, y_1, y_2, y_3, y_4)$, where $g_4$ is a homogeneous polynomial of degree $r + d_4$. Its pull-back via $\Phi$ is 
\begin{equation*}
t^{\tfrac{\delta_4}{r}} s y_4 = g_4 \left( t^{\tfrac{a}{r}} x_1, t^{\tfrac{b}{r}} x_2, t^{\tfrac{c}{r}} x_3, t^{\tfrac{\delta_1}{r}} y_1, t^{\tfrac{\delta_2}{r}} y_2, t^{\tfrac{\delta_3}{r}} y_3, t^{\tfrac{\delta_4}{r}} y_4 \right) \; .
\end{equation*} 
By construction, every monomial in $g_4$ picks up a $t$ factor because there is no pure monomial in $s$ in $g_4$. Define $h_4$ to be the polynomial constituted by all the monomials of $g_4$ containing $y_4$, except for the term $s y_4$. For $g'_4 \coloneqq g_4 - h_4$ and $\kappa$ the minimum exponent that can be factorised from $h_4$, the equation above becomes
\begin{equation} \label{unproj eqn with g'}
t^{\tfrac{\delta_4}{r}} \left(s y_4 + t^{\tfrac{\kappa-\delta_4}{r}} h_4\right) = g'_4 \left( t^{\tfrac{a}{r}} x_1, t^{\tfrac{b}{r}} x_2, t^{\tfrac{c}{r}} x_3, t^{\tfrac{\delta_1}{r}} y_1, t^{\tfrac{\delta_2}{r}} y_2, t^{\tfrac{\delta_3}{r}} y_3 \right) \; .
\end{equation}

\begin{lemma} \label{delta_j > d_j}
	It holds that $\delta_4 \geq d_4$.
\end{lemma}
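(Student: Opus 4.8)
\emph{Proof proposal.} The plan is to unwind the construction of $\delta_4$ just given, reduce the inequality to an elementary statement about the monomials of $g_4$, and then invoke the shape of the unprojection equations. By construction $\delta_4$ is the largest exponent for which, after pulling back to $\F_1$ and saturating along $t$, the monomial $sy_4$ still appears with $t$-exponent $0$; equivalently, writing a monomial of $g_4$ as $\mu=x_1^{e_1}x_2^{e_2}x_3^{e_3}y_1^{f_1}y_2^{f_2}y_3^{f_3}y_4^{f_4}$ and setting
\[
v(\mu):=a e_1+b e_2+c e_3+\delta_1 f_1+\delta_2 f_2+\delta_3 f_3+\delta_4 f_4 ,
\]
one has $\delta_4=\min_{\mu}v(\mu)$. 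A monomial of $h_4$ contains $y_4$, hence contributes a summand $\delta_4$, so it cannot beat this minimum: the minimum is attained on a monomial of $g_4'$ — which is exactly why only $g_4'$ survives on the right-hand side of \eqref{unproj eqn with g'}. Thus it suffices to show $v(\mu)\ge d_4$ for every monomial $\mu$ of $g_4'$. Since $g_4$, and hence $g_4'$, is homogeneous of $w\Proj^7$-degree $r+d_4$, we have $ae_1+be_2+ce_3+d_1f_1+d_2f_2+d_3f_3=r+d_4$, so
\[
v(\mu)=(r+d_4)-\sum_{i=1}^{3}(d_i-\delta_i)f_i ,
\]
and the claim is equivalent to $\sum_{i=1}^{3}(d_i-\delta_i)f_i\le r$ for all such $\mu$.

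I would then prove the four inequalities $\delta_j\ge d_j$, $j=1,\dots,4$, simultaneously, since the identity above and its analogues for $j=1,2,3$ couple them. Set $\varepsilon_j:=\delta_j-d_j$; the identity becomes $\varepsilon_j=r+\min_{\mu\in g_j'}\sum_{i\ne j}\varepsilon_i f_i^{(\mu)}$, and we want $\varepsilon_j\ge 0$. Suppose not, and let $j_0$ realise $\varepsilon:=\min_j\varepsilon_j<0$, witnessed by a monomial $\mu^{*}$ of $g_{j_0}'$. Because each $y_i$ occurring in $\mu^{*}$ has weight $d_i\ge 1$ and $\mu^{*}$ has degree $r+d_{j_0}$, its total ideal-variable degree $F:=\sum_{i\ne j_0}f_i^{(\mu^{*})}$ is finite; feeding $\varepsilon_i\ge\varepsilon$ into the identity gives $\varepsilon\ge r+\varepsilon F$, i.e. $\varepsilon(1-F)\ge r>0$. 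If $F\le 1$ this is impossible, and we are done. The whole point is therefore to rule out $F\ge 2$, and this is where I expect to need the explicit form of $g_j$ from Appendix \ref{appendix} together with Lemma \ref{quasilinearity of entries}: in Tom$_i$ format $g_j$ is assembled from the entries of $M$ and Pfaffian cofactors of controlled $I_D$-order, so a monomial of $g_j'$ cannot carry two or more of $y_1,y_2,y_3,y_4$ at once; moreover $g_j'$ contains a pure-orbinate monomial away from the exceptional Hilbert series of Lemma \ref{quasilinearity of entries}, which incidentally sharpens the outcome to $\delta_j=r+d_j$.

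The main obstacle is precisely this last structural input. A purely numerical estimate only bounds $F$ by something of size $(r+d_{j_0})/d_4$, far too weak to close the loop, so one really has to look inside $g_j$ via the Kustin--Miller / Papadakis construction to see that its monomials carry at most one ideal variable. The few Hilbert series singled out in Lemma \ref{quasilinearity of entries} — in particular \#12960 — lie outside that description and should be checked directly against \cite{TJBigTable}.
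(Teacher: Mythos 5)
Your reduction is fine as far as it goes: rewriting $\delta_4=\min_\mu v(\mu)$, discarding $h_4$, passing to $\varepsilon_j=\delta_j-d_j$ and the coupled identities $\varepsilon_j=r+\min_{\mu\in g_j'}\sum_{i\neq j}\varepsilon_i f_i^{(\mu)}$, and the minimal-counterexample computation $\varepsilon(1-F)\geq r$ are all correct. But the proof stops exactly at the point you yourself flag as "the main obstacle": ruling out $F\geq 2$ at the minimising monomial. That is a genuine gap, and the structural claim you propose to fill it with --- that a monomial of $g_j'$ cannot carry two or more of $y_1,\dots,y_4$ --- is false in general. In the Papadakis construction (Appendix \ref{Papadakis' algorithm}) the polynomial $g_j$ is a $3\times 3$ determinant in the Pfaffians $\Pf_i(N_k)$ divided by $p_1$, and both the non-ideal entries $p_k$ and the coefficients $\alpha^j_{kl}$ may themselves involve ideal variables (already in the example \#10985 one has $p_3=-x_2^3+y_4$, $p_4=-x_3^4+y_3$); since $\deg g_j=r+d_j$ can easily exceed $d_{i_1}+d_{i_2}$, monomials of $g_j'$ quadratic in the ideal variables do occur. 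Neither Lemma \ref{quasilinearity of entries} (which is about linearity of entries of $M$, with \#12960 as the exception for orbinates) nor Lemma \ref{existence of mon tom} asserts anything like your claim, so the argument cannot be closed the way you indicate. Note also that the pure-orbinate monomial of Lemma \ref{existence of mon tom} gives the \emph{upper} bound $\delta_j\leq r+d_j$ (it is one competitor in the minimum); your parenthetical that it "sharpens the outcome to $\delta_j=r+d_j$" only works once the lower bound --- the content of this very lemma --- is already in hand, so it cannot be used as input.

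The paper's own proof takes a different and more direct route: it never sets up the coupled system in the $\varepsilon_j$, but instead runs a case analysis over the types of monomials of $g_4$ (pure in $x_1,x_2,x_3$; mixed in the $x_i$ and $y_1,y_2,y_3$; containing $y_4$; etc.), bounding the $t$-exponent each type acquires under $\Phi^*$ using the weight bookkeeping and the positivity $\delta_1,\delta_2,\delta_3\geq 1$ coming from the construction of $\rho_t$, and concluding that every monomial picks up at least $t^{(r+d_4)/r}$; the equality $\delta_j=r+d_j$ is then obtained in Proposition \ref{shape of F tom} via Lemma \ref{existence of mon tom}. So beyond the organisational difference (simultaneous fixed-point argument versus monomial-by-monomial estimate), the decisive step in your version is missing, and the specific statement you hope will supply it is not available.
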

\begin{proof}
	By construction of $\rho_t$, each $\delta_j$ is a strictly positive integer. We divide this proof in different cases according to the different types of monomials in $g_4$. We indicate by $\underline{x}^l$ the multiplication of pure powers of $x_1$, $x_2$ and $x_3$, not necessarily all together, with different multiplicities, summarised by the multi-index $l$ at the exponent, and similarly for $\underline{y}^{l'}$. In the following, $l$ and $l'$ vary from case to case. Monomials $\underline{x}^l$ with $|l| = \deg(g_4)=r + d_4$ pick up a $t$ factor with exponent $k = |l|= r + d_4$ in the pull-back. Monomials $\underline{x}^l \underline{y}^{l'}$ with $|l+l'|=\deg(g_4)=r + d_4$ pick up a $t$ factor with exponent $k \geq |l+l'| = r + d_4$ in the pull-back because $\delta_1, \delta_2, \delta_3 \geq 1$. Monomials $\underline{x}^l y_4^\lambda$ with $l+\lambda=\deg(g_4)=r + d_4$ pick up a $t$ factor with power $k \geq l + \lambda \delta_4 \geq r + d_4$. Monomials $\underline{y}^{l'} y_4^\lambda$ with $l'+\lambda=\deg(g_4)=r + d_4$ pick up a $t$ factor with power $k \geq l' + \lambda \delta_4 \geq r + d_4$. Monomials $\underline{x}^l \underline{y}^{l'} y_4^\lambda$ with $l+l'+\lambda=\deg(g_4)=r + d_4$ pick up a $t$ factor with power $k \geq l + l' + \lambda \delta_4 \geq r + d_4$. In conclusion, every monomial in $g_4$ picks up a $t^k$ factor with $ k \geq (r + d_4)/r$: then, $\delta_4 \geq d_4$.
\end{proof}
Then, for $\tau_l$ positive integers and $m_l$ monomials of $g'_4$, the pullback of the unprojection equation for $y_4$ is 
\begin{equation*}
t^{\tfrac{\delta_4}{r}} \left(s y_4 + t^{\tfrac{\kappa}{r}} h_4\right) = t^{\tfrac{\tau_1}{r}}m_1 + \dots + t^{\tfrac{\tau_{k_4}}{r}}m_{k_4} \; .
\end{equation*}
\begin{definition} \label{def delta j}
	Define $\delta_4$ as $\delta_4 \coloneqq \min \{\tau_l \; : \; 1 \leq l \leq k_4\}$.
\end{definition}
Since $g'_4$ does not contain $y_4$, $\delta_4$ is well-defined. The definition of $\delta_1$, $\delta_2$, and $\delta_3$ is analogous. The grading for $\F_1$ that we just obtained might not be well-formed (cf \cite{HamidPliabilityCox}), but a manipulation on the rows of \eqref{non well formed scroll} makes it well-formed \eqref{scroll for tom}.
\begin{proposition} \label{shape of F tom}
	Let $X$ be a codimension 4 index 1 Fano 3-fold of Tom type. Then the Kawamata blow-up of $X$ at the Tom centre $P_s$ is contained in a rank 2 toric variety $\F_1$ with grading
	\begin{equation} \label{scroll for tom}
	\left(
	\begin{array}{c c | c c c c c c c}
	t & s & x_1 & x_2 & x_3 & y_1 & y_2 & y_3 & y_4 \\
	0 & r & a & b & c & d_1 & d_2 & d_3 & d_4 \\
	1 & 1 & 0 & 0 & 0 & -1 & -1 & -1 & -1
	\end{array}
	\right) \; .
	\end{equation}
\end{proposition}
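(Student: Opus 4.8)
The plan is to identify all nine entries of the bottom row of \eqref{non well formed scroll} and then to apply the standard manipulation that well-forms the resulting toric datum. Five of these are already fixed by the discussion preceding the statement: requiring $\Phi$ (restricted to $Y_1$) to be the Kawamata blow-up of $P_s$, whose exceptional divisor is $\Proj(a,b,c)$ by Theorem~\ref{kawamata thm}, forces $\omega=r$ and $(\omega_1,\omega_2,\omega_3)=(a,b,c)$, since the exceptional locus $E$ is cut out in $\F_1$ by $y_1=y_2=y_3=y_4=0$ and hence equals $\Proj(\omega_1,\omega_2,\omega_3)$. It remains to pin down $\delta_1,\dots,\delta_4$ and to well-form.

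The crucial claim is that $\delta_j=d_j+r$ for every $j\in\{1,2,3,4\}$. By Definition~\ref{def delta j}, $\delta_j=\min_l\tau_l$ over the monomials $m_l$ of $g'_j$, where $t^{\tau_l/r}$ is the factor picked up by $m_l$ under $\Phi$. Since, by Lemma~\ref{delta_j > d_j} applied to all four indices, $\delta_k\ge d_k$ for every $k$, and every monomial of $g'_j$ has weighted degree $r+d_j$, each such $\tau_l$ is at least $r+d_j$; hence $\delta_j\ge r+d_j$. For the reverse inequality one needs a monomial of $g'_j$ that is a power-product of $x_1,x_2,x_3$ alone of weighted degree $r+d_j$: such a monomial picks up exactly $t^{(r+d_j)/r}$, giving $\delta_j\le r+d_j$. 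For a general $X$ of Tom type such a monomial is indeed present in $g'_j$: Lemma~\ref{quasilinearity of entries} places enough (quasi)linear variables in the syzygy matrix $M$, and feeding this into the explicit shape of the Type~I unprojection equations summarised in Appendix~\ref{appendix} produces the required pure power-product of the $x_i$. Therefore $\delta_j=d_j+r$.

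To well-form, substitute $\omega=r$, $(\omega_1,\omega_2,\omega_3)=(a,b,c)$ and $\delta_j=d_j+r$ into \eqref{non well formed scroll}. Subtracting the second row from the first gives the new first row $(r,r,0,0,0,-r,-r,-r,-r)$; since all of its entries are divisible by $r$, the first $\C^\times$-factor acts through an $r$-fold cover, and replacing this row by its $r$-th part $(1,1,0,0,0,-1,-1,-1,-1)$ is exactly the well-forming operation (cf.~\cite{HamidPliabilityCox}). Adding $r$ times this row back onto the second row turns the latter into $(0,r,a,b,c,d_1,d_2,d_3,d_4)$, and swapping the two rows yields precisely \eqref{scroll for tom}; one then checks column by column that this datum is well-formed and that the irrelevant ideal is unchanged. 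This proves the proposition.

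The only genuinely non-formal step is the equality $\delta_j=d_j+r$ — concretely, the upper bound $\delta_j\le d_j+r$, i.e.\ the presence in $g'_j$ of a power-product of $x_1,x_2,x_3$ of weighted degree exactly $r+d_j$. This rests on the precise form of the unprojection equations together with the quasilinearity of Lemma~\ref{quasilinearity of entries}, and it is the place where the borderline Hilbert series (such as \#12960, and the cases flagged in Theorem~\ref{T outputs of sarkisov links}) would have to be inspected individually.
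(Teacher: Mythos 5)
Your overall route is the same as the paper's: fix $\omega=r$ and $(\omega_1,\omega_2,\omega_3)=(a,b,c)$ from Theorem~\ref{kawamata thm}, show $\delta_j=r+d_j$ by combining a lower bound with the existence of a suitable monomial, and then well-form \eqref{non well formed scroll} by row operations (the paper subtracts the second row from the third and divides by $-r$, which is the same manipulation you describe). The lower-bound half of your argument is fine.

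The gap is in the upper bound $\delta_j\le r+d_j$, i.e.\ the claim that each $g'_j$ contains a power-product of $x_1,x_2,x_3$ alone of weighted degree $r+d_j$. You assert this by saying that Lemma~\ref{quasilinearity of entries} ``places enough (quasi)linear variables in $M$'' and that feeding this into the appendix ``produces the required pure power-product'', but this is precisely the content that needs a proof: in the paper it is an independent statement, Lemma~\ref{existence of mon tom}, and its proof does not run through the quasilinearity of the ideal entries in the way you suggest. The actual mechanism is the index~1 hypothesis: one of the orbinates, say $x_1$, has weight $1$, so for a general member the non-ideal entries $p_j$ of the Tom matrix contain the pure powers $x_1^{\deg p_j}$; one then uses Papadakis' construction (the identity $p_iH_j=p_jH_i$, so that $g_j$ may be computed from $H_1/p_1$, i.e.\ from determinants of the $N_j$ with the Pfaffians involving the top row excluded) to see that each entry of $Q$ in rows $2,3,4$, and hence each $g_j$, contains a polynomial purely in $x_1,x_2,x_3$. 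Without some such argument your $\delta_j\le r+d_j$ is unsupported. Relatedly, your closing caveat is misplaced: the existence statement holds with no exceptions among the Tom families, and the \#12960 exception in Lemma~\ref{quasilinearity of entries} (linearity in an orbinate inside $M$) is not what this proposition depends on --- that series is set aside in the Main Theorem for Picard-rank reasons (Remark~\ref{on P2xP2}), not because the grading \eqref{scroll for tom} fails.
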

Note that $x_1,x_2,x_3$ generate the same linear system and, therefore, the same ray in the ray-chamber structure of $\F_1$. Since the Fano index of $X$ is 1, then one of the $x_i$ has weight 1. To fix ideas, let the weight of $x_1$ be 1. 
To prove Proposition \ref{shape of F tom} we need the following
\begin{lemma} \label{existence of mon tom}
	Let $Z$ be a codimension 3 Fano 3-fold defined by pfaffians of a $5 \times 5$ skew-symmetric matrix $M$ in Tom format. Consider the Type I unprojection of $Z$ at a divisor $D$. Then each unprojection equation contains at least one monomial purely in $x_1,x_2,x_3$.
\end{lemma}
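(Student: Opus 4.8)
My plan is to deduce the statement from the geometry of the unprojection, without ever computing the polynomials $g_i$. Write the four unprojection equations as $sy_i=g_i(x_1,x_2,x_3,y_1,\dots,y_4)$, $i=1,\dots,4$, with $s$ of degree $r$. I will use two standard features of the Kustin--Miller construction (see \cite{T&Jpart1,PapadakisReidKM} and Appendix \ref{appendix}): (i) the compatibility relations $y_ig_j\equiv y_jg_i\pmod{I_Z}$, so that $\rho:=g_i/y_i$ is a well-defined rational section of $\mathcal O_Z(r)$ on $Z$, regular on $Z\setminus D$ (off $D$ one of the $y_i$ is invertible); and (ii) that the unprojection genuinely raises the codimension, i.e. $X$ is of codimension $4$ in its anticanonical embedding (equivalently, $s$ is a new generator of $R(X)=R(Z)[s]/(sy_1-g_1,\dots,sy_4-g_4)$).

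The first step is to check that $\rho$ is \emph{not} a global regular section on $Z$. If it were, then $\rho\equiv\bar\rho(x_1,x_2,x_3,y_1,\dots,y_4)\pmod{I_Z}$ for some homogeneous $\bar\rho$ of degree $r$, hence $g_i\equiv\bar\rho\,y_i\pmod{I_Z}$ for each $i$. Then $y_i(s-\bar\rho)=(sy_i-g_i)+(g_i-\bar\rho\,y_i)\in I_X$ (using $I_Z\subseteq I_X$); since $I_X$ is prime and $y_i\notin I_X$, this would give $s-\bar\rho\in I_X$, so $s$ could be eliminated and $X$ would sit in codimension $3$, contradicting (ii). Since $Z$ is normal, $D$ is a prime divisor on it, and $\rho$ is regular away from $D$ by (i), we conclude $\operatorname{ord}_D(\rho)<0$: the coordinate $s$ acquires a genuine pole along $D$ --- which is the infinitesimal version of the fact that $D$ is contracted to $P_s$.

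The second step is an order count along $D$. The subscheme $D\subset Z$ is reduced, with coordinate ring $k[x_1,x_2,x_3]$, so the classes of $y_1,\dots,y_4$ generate the maximal ideal of the discrete valuation ring $\mathcal O_{Z,\eta_D}$; for a general $Z$ in its Tom family one moreover has $\operatorname{ord}_D(y_i)=1$ for \emph{every} $i$ (the divisor $D$ is fixed, and a general $Z$ is transverse to each $\{y_i=0\}$ along $D$). Since $g_i=\rho\,y_i$ in $\mathcal O_{Z,\eta_D}$, we get $\operatorname{ord}_D(g_i)=\operatorname{ord}_D(\rho)+1\le0$; but $g_i$ is a polynomial, hence regular along $D$, so $\operatorname{ord}_D(g_i)=0$. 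As $(y_1,\dots,y_4)$ is a prime ideal containing $I_Z$, this says precisely that $g_i\notin(y_1,\dots,y_4)$, i.e. that some monomial of $g_i$ is divisible by none of $y_1,\dots,y_4$ --- a monomial purely in $x_1,x_2,x_3$, as claimed.

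I expect the genuinely delicate point to be the input $\operatorname{ord}_D(y_i)=1$ for all $i$ used in the last step: granting $\operatorname{ord}_D(\rho)<0$, this is in fact \emph{equivalent} to the conclusion of the lemma, so it cannot be sidestepped. It expresses transversality of $Z$ to each coordinate hyperplane $\{y_i=0\}$ along $D$, and it is where generality of $Z$ really enters; it is also where the combinatorics of the Tom format --- namely Lemma \ref{quasilinearity of entries}, which guarantees that enough of the $y_i$ occur linearly in entries of $M$ lying in $I_D$ --- does its work. Everything else in the argument is formal manipulation of the unprojection data.
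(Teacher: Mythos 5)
Your first step (the section $\rho=g_i/y_i$ is well defined off $D$, cannot be globally regular without making $s$ eliminable, hence has a pole along $D$) is essentially sound, granting routine points such as projective normality of $Z$ in its anticanonical embedding and primality of $I_X$; it is also a genuinely different and rather illuminating reformulation: the lemma is exactly the statement that, relative to each generator $y_i$, the Kustin--Miller section has a simple pole, i.e. $\operatorname{ord}_D(y_i)=1$ for every $i$. But this is where the proof stops being a proof. The input $\operatorname{ord}_D(y_i)=1$ for all $i$ is asserted with the parenthetical ``a general $Z$ is transverse to each $\{y_i=0\}$ along $D$'', and, as you yourself concede, once $\operatorname{ord}_D(\rho)<0$ is known this input is \emph{equivalent} to the conclusion of the lemma. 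Since $g_i$ is always regular, $\operatorname{ord}_D(g_i)=\operatorname{ord}_D(\rho)+\operatorname{ord}_D(y_i)=0$ forces $\operatorname{ord}_D(y_i)=-\operatorname{ord}_D(\rho)$ for each $i$; so the entire content has been relocated into the transversality claim, which is not proved. It is also not a soft generality statement: $Z$ does not move in a complete linear system but in the constrained Tom family, and whether $y_i$ is a uniformizer at the generic point of $D$ is governed by the linear-in-$y$ parts of the pfaffians along $D$, i.e. by the matrix $A=(\gamma_i(\Pf_j))$ of Lemma \ref{rank of A}: one has $\operatorname{ord}_D(y_i)\geq 2$ precisely when the $i$-th coordinate vector lies in the generic row space of $A$, and excluding this for every $i$ requires engaging with the weights $m_{k,l}$, the Tom constraints, and the index 1 hypothesis.

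That is exactly what the paper's proof does, and why the two arguments are not interchangeable: the paper works with Papadakis' closed formula (Appendix \ref{appendix}), uses that Fano index 1 forces an orbinate of weight 1 so that each non-ideal entry $p_j$ contains $x_1^{\deg(p_j)}$, and uses the generality of the ideal entries (each $y_j$ appearing either linearly or multiplied by $x$-monomials, cf. Lemma \ref{quasilinearity of entries}) to exhibit pure $x$-monomials directly in the entries of $Q$ entering \eqref{def of unproj in Pap}, hence in each $g_j$. Your argument never touches the weight data or the index, which is a warning sign that the formal valuation-theoretic part cannot by itself deliver the conclusion (the quasilinearity statement already has weight-dependent exceptions such as \#12960, so the conclusion is sensitive to precisely this data). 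To complete your route you would have to prove $\operatorname{ord}_D(y_i)=1$ for all $i$ for a general Tom member — for instance by the same explicit analysis of the entries of $M$ that the paper performs — at which point the detour through $\rho$ buys conceptual clarity but no saving of work.
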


\begin{proof}
	Since $x_1$ has weight 1, using the notation in Appendix \ref{Papadakis' algorithm}, $p_j$ contains a monomial of the form $x_1^{\deg(p_j)}$. There are different possibilities to fill the ideal entries $a_{k,l}$. If an ideal entry has the same weight as of one of the $y_j$, then it contains such ideal variable linearly, i.e $\alpha^j_{k,l}$ is constant. Otherwise, it contains multiplications of $y_j$ by the $x_i$, that is $\alpha^j_{k,l}$ is a polynomial containing a term in the $x_i$. We assume this without loss of generality.	Therefore, each $N_j$ has at least one entry that is either a constant or a monomial in the $x_i$.
	
	Since the vector of the $g_j$ is independent on the choice of $i$ in \eqref{H_i independent on i}, it is possible to consider only $\tfrac{H_1}{p_1}$. Therefore, we exclude from the calculation of $g_j$ all $\Pf_1(N_j)$, that is, all $\Pf_i(N_j)$ involving the top row of the matrices $N_j$, which are the ones containing pure terms in $x_1,x_2,x_3$. Thus, each entry of $Q$ in row 2, 3, and 4 contains a polynomial purely in $x_1,x_2,x_3$. The same holds for the other $g_i$.
\end{proof}

\begin{proof}[Proof of Proposition \ref{shape of F tom}]	
	By Lemma \ref{existence of mon tom}, each unprojection equation contains at least one monomial in the orbinates of $P_s$. By the proof of Lemma \ref{delta_j > d_j}, such monomial realises the minimum value of Definition \ref{def delta j}. Thus, by Lemma \ref{existence of mon tom}, we choose $\delta_4 = r + d_4$: so, $\delta_4$ is equal to the degree of $g_4$. In turn, we can apply this same strategy to $\delta_1$, $\delta_2$, $\delta_3$: the power of $t$ gained by each $y_j$ factor is greater or equal to $d_j/r$, so $\delta_j \geq d_j$. Since $\delta_j = r + d_j$ for each $j \in \{ 1,2,3,4\}$, the order in which we determine the $\delta_j$ is unimportant. The weights in \eqref{scroll for tom} follow by simple manipulation of the rows of \eqref{non well formed scroll} with the grading we just defined: subtracting the second row to the third row of \eqref{non well formed scroll} and dividing the third row by $-r$ we obtain an isomorphic rank 2 toric variety whose Cox ring is given by \eqref{scroll for tom}.
\end{proof}

\subsection{The Kawamata blow-up of a Fano: equations of the blow-up $Y_1$}

As anticipated above, we define the blow-up $Y_1$ of $X$ at $P_s$ as the following.
\begin{definition} \label{def of Y1}
	The ideal of $Y_1 \subset \F_1$ is the saturation over $t$ of the ideal of $\Phi^*(X)$.
\end{definition}
This motivates the construction of the bottom weights of $\F_1$ made in Definition \ref{def delta j}. In relation to the 1-skeleton in \eqref{scroll for tom}, $\Phi$ and $\alpha_1$ are given by the linear systems $\left| \mathcal{O} \big( \substack{\scriptscriptstyle{r}\\\scriptscriptstyle{1}} \big) \right| $, $\left| \mathcal{O} \big( \substack{\scriptscriptstyle{1}\\\scriptscriptstyle{0}} \big) \right| $ respectively. The next statements make Definition \ref{def of Y1} more manoeuvrable and explicit.

\begin{proposition} \label{Phi and alpha1 proportional}
	The pull-back of the pfaffian equations via $\Phi$ and via $\alpha_1$ are equal up to a $t$ factor.
\end{proposition}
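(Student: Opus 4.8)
The plan is a direct computation comparing the two pull-backs monomial by monomial, using the explicit form of $\Phi$. Recall that the five pfaffian equations $P_1,\dots,P_5$ of $X$ are exactly the ones inherited from $Z_1$: each $P_k$ is homogeneous of some degree $e_k$ in the variables $x_1,x_2,x_3,y_1,\dots,y_4$ only, with no occurrence of $s$.

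The first step is to record how the two maps act on homogeneous coordinates in the grading \eqref{scroll for tom}. From its defining formula, $\Phi$ substitutes $x_i\mapsto t^{a_i/r}x_i$, $y_j\mapsto t^{(r+d_j)/r}y_j = t\cdot t^{d_j/r}y_j$ and $s\mapsto s$. For $\alpha_1$, the linear system $|\mathcal{O}(1,0)|$ defining it acts on the coordinates by $x_i\mapsto x_i$ and $y_j\mapsto t\,y_j$; equivalently, $\alpha_1$ resolves the projection of $X$ away from $P_s$, on which the $s$-free pfaffian $P_k$ is invariant, so that $\alpha_1^{*}P_k = P_k(x_1,x_2,x_3,ty_1,\dots,ty_4)$. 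The second step is the computation itself. Take a monomial $m = \underline{x}^{\,l}\underline{y}^{\,l'}$ of $P_k$, of weighted degree $e_k$, and let $N = |l'|$ be the number of its $y$-factors counted with multiplicity. Under $\alpha_1$ it becomes $t^{N}\,\underline{x}^{\,l}\underline{y}^{\,l'}$, so $\alpha_1^{*}P_k = \sum_{l,l'} c_{l,l'}\, t^{|l'|}\,\underline{x}^{\,l}\underline{y}^{\,l'}$, bihomogeneous of bidegree $(e_k,0)$. Under $\Phi$ the substitutions $x_i\mapsto t^{a_i/r}x_i$, $y_j\mapsto t^{d_j/r}y_j$ contribute, by homogeneity of $P_k$, an additional factor $t^{(\sum a_i l_i+\sum d_j l'_j)/r} = t^{e_k/r}$, the same for every monomial, so $m$ becomes $t^{e_k/r+N}\,\underline{x}^{\,l}\underline{y}^{\,l'}$ and hence $\Phi^{*}P_k = t^{e_k/r}\,\alpha_1^{*}P_k$. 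Thus the two pull-backs agree up to the factor $t^{e_k/r}$.

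The one delicate point is the meaning of the exponent $e_k/r$, since the pfaffian weights $e_k$ need not be divisible by $r$. This is best read at the level of proper transforms: $\Phi^{*}\{P_k=0\}$ and $\alpha_1^{*}\{P_k=0\}$ have the same proper transform in $\F_1$ and differ only by a (rational) multiple of the $\Phi$-exceptional divisor $\{t=0\}$; equivalently, after saturation over $t$ — the normalization built into Definition \ref{def of Y1} — the two pull-backs generate the same ideal. This is exactly what is needed afterwards to match the pfaffian part of the equations of $Y_1$ with the equations of $Z_1$. I expect this bookkeeping, rather than any geometric difficulty, to be the only point requiring care; the rest is the homogeneity computation above.
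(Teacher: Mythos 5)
Your proof is correct and takes essentially the same route as the paper: the paper also compares the two coordinate substitutions (phrased as $t^{-1/r}\,\Phi=\alpha_1$, i.e.\ each variable $w$ picks up exponents differing by $\deg_{w\Proj^7}(w)/r$) and then uses homogeneity of the pfaffians to conclude $\Phi^{*}\Pf_k(M)=t^{e_k/r}\,\alpha_1^{*}\Pf_k(M)$. Your closing remark about the possibly fractional exponent corresponds to the paper's subsequent rescaling of $\Phi$ by a power of $t$ to land in a polynomial ring before saturating, so no gap remains.
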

More precisely, the evaluation of $\Pf_i(M)$ at the defining monomials of $\Phi$ is proportional by a $t$ factor to the evaluation of $\Pf_i(M)$ at those of $\alpha_1$.

\begin{proof}
	We prove that $t^{-\frac{1}{r}} \Phi = \alpha_1$. The map $\alpha_1 \colon \F_1 \rightarrow w\Proj^6$ is 
	\begin{equation*}
	\left(t,s, x_1, x_2, x_3, y_1, y_2, y_3, y_4\right) \longmapsto \left( x_1, x_2, x_3, t y_1, t y_2, t y_3, t y_4 \right) \; .
	\end{equation*}
	Consider a variable $w$ of $\F_1$ among $x_1, x_2, x_3,$ $y_1, y_2, y_3, y_4$ with bidegree $\binom{\nu_1}{\nu_2}$. Call $\zeta$ the exponent of the $t$ factor that $w$ needs to pick up such that the bidegree of $w t^\zeta$ is proportional to $\binom{r}{1}$. In other words, we need to find $\zeta$ such that $\deg w t^\zeta = \binom{\nu_1}{\nu_2 + \zeta} = \lambda \binom{r}{1}$ for some $\lambda > 0$.
	Since $\lambda = \nu_2 + \zeta$, we have that $\zeta = \tfrac{\nu_1}{r} - \nu_2$. On the other hand, the exponent $\zeta'$ of the $t$ factor needed such that the bidegree of $w t^{\zeta'}$ is proportional to $\binom{1}{0}$ is $\deg w t^{\zeta'} = \binom{\nu_1}{\nu_2 + \zeta'} = \mu \binom{1}{0}$ for some $\mu > 0$.
	Here $\zeta' = - \nu_2$. Thus, $\zeta - \zeta' = \nu_1/r = 1/r \deg_{w\Proj^7} w$. This means that on every variable $x_1, x_2, x_3,$ $y_1, y_2, y_3, y_4$ of $\F_1$ the exponents $\zeta$ and $\zeta'$ differ only by $1/r$. 
\end{proof}

\begin{remark} \label{ideal entries divisible by t}
	If $M$ is in Tom format, it is possible to cancel out from $\alpha_1^*(\Pf(M))$ a $t$ factor with power at least 1. 
\end{remark}
Let $I_X \coloneqq \Span[f_1, \dots, f_5, f_6, \dots, f_9]$ be the ideal of $X$, generated by polynomials $f_i \coloneqq \Pf_i(M)$ for $i \in \{ 1, \dots, 5\}$ and $f_i \coloneqq s y_i - g_i$ for $i \in \{ 6, \dots, 9\}$. Recall that $\Phi$ is expressed with fractional exponents of $t$. For $\Phi^*(X)$ to have equation in a polynomial ring, we write an equivalent expression for $\Phi$ considering its multiplication by a $t^{(r-a)/r}$ factor. Thus,
\begin{align*} \label{Phi with integer exponents}
&t^{\tfrac{r-a}{r}} \cdot \left( t^{\tfrac{a}{r}}x_1, t^{\tfrac{b}{r}}x_2, t^{\tfrac{c}{r}}x_3, t^{\tfrac{\delta_1}{r}}y_1, t^{\frac{\delta_2}{r}}y_2, t^{\tfrac{\delta_3}{r}}y_3, t^{\frac{\delta_4}{r}}y_4, s \right)= \nonumber \\
& \left( t x_1, t^{\tfrac{b(r-a)}{r} + \tfrac{b}{r}}x_2, t^{\tfrac{c(r-a)}{r} + \tfrac{c}{r}}x_3, t^{\tfrac{d_1(r-a)}{r} + \tfrac{\delta_1}{r}}y_1, t^{\tfrac{d_2(r-a)}{r} + \tfrac{\delta_2}{r}}y_2, t^{\tfrac{d_3(r-a)}{r} + \tfrac{\delta_3}{r}}y_3, t^{\tfrac{d_4(r-a)}{r} + \tfrac{\delta_4}{r}}y_4, t^{r-a} s \right)
\end{align*} 
This expression has integer exponents. Call $I_{\Phi^* X} \coloneqq \Span[\Phi^* f_1, \dots, \Phi^* f_5, \Phi^*f_6, \dots, \Phi^*f_9]$. Remark \ref{ideal entries divisible by t} guarantees that, up to a $t$ factor, $\Phi^*$ and $\alpha_1^*$ coincide on the pfaffian equations. Define the following polynomials
\begin{align}
h_1 &\coloneqq \tfrac{\alpha_1^*\Pf_1(M)}{t^2} = \tfrac{\alpha_1^*f_1}{t^2} \; ; & \\
h_i &\coloneqq \tfrac{\alpha_1^*\Pf_i(M)}{t} = \tfrac{\alpha_1^*f_i}{t} \qquad &\text{for } i \in \{ 2, \dots, 5\} \; ; \\
h_i &\coloneqq \tfrac{\Phi^*f_i}{t^{\delta_{i-5} + r-a}} \qquad &\text{for } i \in \{ 6, \dots, 9\} \; ;
\end{align}
and the ideal $I_{Y_1} \coloneqq \left( I_{\Phi^* X} \colon t^\infty \right)$ as the saturation of $I_{\Phi^* X}$ over $t$ as in Definition \ref{def of Y1}.

\begin{lemma}\label{saturation is ideal of Y}
	We have that $I_{Y_1} = \Span[h_1, \dots, h_5, h_6, \dots, h_9]$.
\end{lemma}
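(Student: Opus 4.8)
The plan is to prove the two inclusions $I_{Y_1} \subseteq (h_1,\dots,h_9)$ and $(h_1,\dots,h_9) \subseteq I_{Y_1}$ separately, working throughout in the Cox ring $\C[t,s,x_1,x_2,x_3,y_1,\dots,y_4]$ of $\F_1$.

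\medskip

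\noindent\textbf{The easy inclusion.} First I would show $(h_1,\dots,h_9) \subseteq I_{Y_1}$. By construction each $h_i$ is obtained from one of the generators $\Phi^* f_i$ of $I_{\Phi^*X}$ by dividing out a pure power of $t$; for $i\in\{1,\dots,5\}$ this uses Proposition \ref{Phi and alpha1 proportional} and Remark \ref{ideal entries divisible by t} to rewrite $\Phi^* f_i$ as $t^{\text{(something)}}\cdot(t^2 h_1)$ or $t^{\text{(something)}}\cdot(t\,h_i)$, and for $i\in\{6,\dots,9\}$ it is immediate from the definition $h_i = \Phi^*f_i / t^{\delta_{i-5}+r-a}$. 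Hence $t^{N_i} h_i \in I_{\Phi^*X}$ for suitable $N_i$, so each $h_i$ lies in the saturation $(I_{\Phi^*X}\colon t^\infty) = I_{Y_1}$. This direction is routine bookkeeping with the exponents computed in Proposition \ref{Phi and alpha1 proportional}.

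\medskip

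\noindent\textbf{The hard inclusion.} The substance is $I_{Y_1}\subseteq (h_1,\dots,h_9)$, i.e. that $(h_1,\dots,h_9)$ is already saturated with respect to $t$ (equivalently, $t$ is a nonzerodivisor on the quotient ring, or at least that no new elements appear after inverting $t$). Conversely it also requires that $(h_1,\dots,h_9)\supseteq I_{\Phi^*X}$ after saturating, which is the content of the factorisations above read backwards: $\Phi^* f_i = t^{N_i} h_i \in (h_1,\dots,h_9)$, so $I_{\Phi^*X}\subseteq (h_1,\dots,h_9)$ and therefore $(I_{\Phi^*X}\colon t^\infty)\subseteq ((h_1,\dots,h_9)\colon t^\infty)$; it then remains to see this last ideal equals $(h_1,\dots,h_9)$ itself. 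To prove $(h_1,\dots,h_9)$ is $t$-saturated I would set $t=0$ and examine the ideal $\overline{J} := (h_1,\dots,h_9)|_{t=0}$ cut out in $E \cong \Proj^2(a,b,c)$, the exceptional divisor where $y_1=\dots=y_4=0$ is not imposed --- rather, one checks that the restrictions $h_i|_{t=0}$ describe a variety of the expected dimension, so that no component of $V(h_1,\dots,h_9)$ is contained in $\{t=0\}$. Concretely: $h_1|_{t=0}$ and $h_i|_{t=0}$ for $i=2,\dots,5$ are the \emph{leading terms} (lowest-$t$-order parts) of the pulled-back pfaffians, and $h_i|_{t=0}$ for $i=6,\dots,9$ are the leading terms of the pulled-back unprojection equations; by the choice of $\delta_j = r+d_j$ in Proposition \ref{shape of F tom} the leading term of the $j$-th unprojection equation is exactly $sy_j$ (this is the whole point of Definition \ref{def delta j} and Lemma \ref{existence of mon tom}). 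So on $\{t=0\}$ the variables $y_1,\dots,y_4$ are expressed via $s y_j = (\text{leading part of }g_j)$, and one is left with a complete intersection/pfaffian locus of the correct codimension, hence $V(h_1,\dots,h_9)$ has no embedded or excess component inside $t=0$. This is the step I expect to be the main obstacle: it is essentially a flatness/leading-term argument and must use the Tom-format structure (via Lemmas \ref{quasilinearity of entries} and \ref{existence of mon tom}) to guarantee the leading forms are as claimed and are a regular sequence on $E$.

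\medskip

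\noindent\textbf{Assembling the proof.} Combining the two inclusions gives $I_{Y_1} = (h_1,\dots,h_9)$. In practice I would phrase the hard inclusion as: ``If $f\in I_{Y_1}$, then $t^M f \in I_{\Phi^*X}\subseteq (h_1,\dots,h_9)$ for some $M$; since $(h_1,\dots,h_9)$ is $t$-saturated --- because its restriction to $t=0$ defines a scheme of pure dimension $\dim \F_1 - 9$ with no component in the hyperplane $t=0$, as the leading forms $sy_j$ and the leading pfaffians form a regular sequence there --- it follows that $f \in (h_1,\dots,h_9)$.'' The verification that the nine leading forms form a regular sequence on the toric ambient space is the only place real work is needed, and it is exactly where the hypothesis that $X$ is a \emph{general} Tom-type Fano (so that the linear monomials of Lemma \ref{quasilinearity of entries} genuinely occur) is used.
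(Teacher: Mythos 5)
Your reduction is set up correctly: the inclusion $(h_1,\dots,h_9)\subseteq I_{Y_1}$ is indeed routine, and the hard direction does come down to showing that the ideal $(h_1,\dots,h_9)$ is already saturated with respect to $t$. But the argument you propose for that saturation step has a genuine gap, in two places. First, the dimension bookkeeping is wrong: $\F_1$ is $7$-dimensional and $Y_1$ has codimension $4$ in it, yet it is cut out by \emph{nine} generators (five pfaffians plus four unprojection equations). This is a Tom-unprojection ideal, very far from a complete intersection, so the nine leading forms cannot ``form a regular sequence'' and ``pure dimension $\dim\F_1-9$'' is not even meaningful; the step you yourself flag as the main obstacle is built on this impossible premise. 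Second, and more fundamentally, the criterion you invoke --- no irreducible component of $V(h_1,\dots,h_9)$ lies in $\{t=0\}$ --- only controls the \emph{minimal} primes. The equality $\left((h_1,\dots,h_9):t^\infty\right)=(h_1,\dots,h_9)$ requires $t$ to avoid every associated prime, embedded ones included, and nothing in your sketch excludes an embedded component supported on $\{t=0\}$. To repair the geometric route you would need an actual unmixedness or Cohen--Macaulayness statement for the specific ideal generated by the $h_i$, which is essentially as hard as the lemma itself and is not delivered by Lemmas \ref{quasilinearity of entries} and \ref{existence of mon tom} alone.

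The paper instead proves the equality ideal-theoretically by running the standard elimination algorithm for saturation: adjoin a variable $z$, form the ideal generated by $I_{\Phi^* X}$ and $tz-1$, and compute a Gr\"obner basis with respect to a monomial order designed to select the monomials of lowest $t$-order. The structural input you correctly identified (Lemma \ref{quasilinearity of entries}, Lemma \ref{existence of mon tom}, and the choice $\delta_j=r+d_j$ from Proposition \ref{shape of F tom}) enters exactly there: it pins down the leading terms $t^2y_{j_1}y_{j_2}$, $t x_{j_i}y_{j_i}$ and $t^{\delta_{i-5}+r-a}s y_{i-5}$, the S-polynomials against $tz-1$ strip off the $t$-powers and produce the $h_i$, and since each $\LT(h_i)$ is coprime to $tz$, Buchberger's criterion shows no further generators arise, so the elimination ideal is exactly $(h_1,\dots,h_9)$. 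That computation is what substitutes for the saturation claim you left unproved; as written, your proposal does not establish it.
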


\begin{proof}
	For the saturation algorithm we refer to \cite{CoxLittleOSheaIdVarAlgorithms}: we introduce a temporary variable $z$ and define the ideal $ J \coloneqq \Span[ I_{\Phi^* X}, t z -1 ] \subset S \coloneqq R[z]$, where $R\coloneqq \C \left[ t, s, x_1, x_2, x_3, y_1, y_2, y_3, y_4 \right]$. We study explicitly the Gr\"{o}bner basis of $J$ with respect to a complete monomial ordering $\succ$. Then, $\left( I_{\Phi^* X} : t^\infty \right) = J \cap R$ (see \cite[Chapter 4, \S 4]{CoxLittleOSheaIdVarAlgorithms}). The monomial ordering is such that $z$ is the largest, $s$ is the second largest, and the monomials containing the least number of $y_j$ follow. In other words, $\succ$ is defined by
	
	\begin{equation} \label{matrix of monomial order}
	\left(
	\begin{array}{c c c c c c c c c c}
	z & s & x_1 & x_2 & x_3 & y_1 & y_2 & y_3 & y_4 & t \\
	1 & 0 & 0 & 0 & 0 & 0 & 0 & 0 & 0 & 0 \\
	0 & 1 & 0 & 0 & 0 & 0 & 0 & 0 & 0 & 0 \\
	0 & 0 & a & b & c & d_1 -1 & d_2 -1 & d_3 -1 & d_4 -1 & 1 \\
	0 & 0 & a & b & c & d_1 -1 & d_2 -1 & d_3 -1 & d_4 -1 & 0 \\
	0 & 0 & a & b & c & d_1 -1 & d_2 -1 & d_3 -1 & 0 & 0 \\
	0 & 0 & a & b & c & d_1 -1 & d_2 -1 & 0 & 0 & 0 \\
	0 & 0 & a & b & c & d_1 -1 & 0 & 0 & 0 & 0 \\
	\end{array}
	\right) \; .
	\end{equation}
	Consider a polynomial $k$ in which $z$ does not appear. Call $k_1 \coloneqq \LT(k)$ the leading term of $k$ according to $\succ$, so $k = k_1 + k_2$ is the sum of the monomial $k_1$ and the polynomial $k_2 \coloneqq k - k_1$. The least common multiple between the respective leading terms of $t^d k$ and $t z -1$ is $\mathrm{lcm} \left( \LT(t^d k), \LT(t z -1) \right) = t^{d+1} k_1 z$. Then, following \cite{CoxLittleOSheaIdVarAlgorithms}, the S-polynomials for $t^d k$ for some $d \geq 1$ are $S\left( t^d k, t z -1 \right) = t^d k$.
	The leading term of $\Phi^*f_1$ is of the form $\LT(\Phi^*f_1) = t^2 y_{j_1} y_{j_2}$ for certain $j_1, j_2 \in \{ 1, 2, 3, 4\}$. Similarly for $i \in \{ 2, \dots , 5\}$, the leading term is $\LT(\Phi^*f_i) = t x_{j_i} y_{j_i}$ for certain $j_i \in \{ 1, 2, 3\}$ and $j_i \in \{ 1, 2, 3, 4 \}$. For $i \in \{ 6, \dots , 9\}$ instead, $\LT(\Phi^*f_i) = s y_{i-5} t^{\delta_{i-5} + r-a}$. The monomial ordering $\succ$ is designed to identify as biggest the monomials having the lowest exponent of $t$. Therefore, for each $i \in \{ 1, \dots , 9\}$ there is a suitable $d$ such that $\Phi^*f_i = t^d h_i$. So we have that $S\left( \Phi^*f_i, t z -1 \right) + S\left( t^d h_i, t z -1 \right) = t^d h_1$.
	Therefore, the Gr\"{o}bner basis of $J$ is 
	\begin{align*}
	&GB_\succ (\Phi^*f_1, \dots, \Phi^*f_9, t z -1) = \\
	&\left( t^ h_1, t h_2, t h_3, t h_4, t h_5, t^{\delta_{1} + r-a} h_6, t^{\delta_{2} + r-a} h_7, t^{\delta_{} + r-a} h_8, t^{\delta_{4} + r-a} h_9 \right) \cup \{ t z -1 \} \; .
	\end{align*}	
	For $i \in \{ 1, \dots , 9\}$, we have that $\LT(h_i)$ and $t z$ are coprime since the highest common factor $hcf \left( \LT(h_i), t z \right) = 1$. Thus, $	GB_\succ (h_1, \dots, h_9, t z -1) = GB_\succ \left(h_1, \dots, h_9 \right) \cup \{ t z -1 \}$. In conclusion,
	\begin{equation*}
	\begin{aligned}
	\left( \Span[h_1, \dots, h_9] \colon t^\infty \right) &= \Span[GB_\succ (h_1, \dots, h_9, t z -1) \cap R] \\
	&= \Span[GB_\succ (h_1, \dots, h_9)] = \Span[h_1, \dots, h_9] \; . 
	\end{aligned}
	\qedhere
	\end{equation*}
\end{proof}

\section{Description of the links and proof of the Main Theorem} \label{proof of main theorems}

We break down every step of the birational links described in Theorem \ref{T outputs of sarkisov links} and we give a proof of Theorem \ref{T outputs of sarkisov links}. We first mention the following remarks about Theorem \ref{T outputs of sarkisov links}.

\begin{remark} \label{simultaneous flips}
	The flip in case \ref{1=2>3>4} and \ref{1=2>3=4} is algebraically irreducible (that is, its base is irreducible as an algebraic set and its exceptional locus consists of one connected component), but the intersection between its exceptional locus and $Y_2$ consists of two disjoint tubular neighbourhoods, that are either both toric or both hypersurface. In other words, the intersection between $Y_2$ and the contracted locus of the flip is not irreducible, and it is formed of two distinct connected components. 
	
	In \ref{1=2=3>4} the exceptional divisor of $\Phi'$ contracts to an irreducible conic $\Gamma \subset \Proj^2$.
\end{remark}

\begin{remark} \label{on P2xP2}
	This theorem does not consider the Fano 3-folds in $\Proj^2 \times \Proj^2$ format of \cite{brownP2xP2} (often stemming from the second Tom deformation families), as they have Picard rank 2. The Hilbert series \#12960 is one of them, thus it is not described in \ref{1=2=3=4} of Theorem \ref{T outputs of sarkisov links}. 
\end{remark}

Let $X \subset w\Proj^7$ be a general codimension 4 Fano 3-fold of Tom type and $p \in X$ a Tom centre. We first prove part \ref{X not bir rigid, X not iso to X'} of Theorem \ref{T outputs of sarkisov links}. Part \ref{if Pic=1 then X bir rigid} is an immediate corollary of parts \ref{all links for Tom} and \ref{X not bir rigid, X not iso to X'}.

\begin{proof}[Proof of Theorem \ref{T outputs of sarkisov links}, \ref{X not bir rigid, X not iso to X'}]
	Consider a birational link for $X \ni p$ that terminates with a divisorial contraction. Suppose that the endpoint Mori fibre space $Y \rightarrow S$ is a Fano 3-fold $X' \rightarrow S=\{ pt \}$. Let $\mathcal{B}_X$ be the basket of singularities of $X$. It is possible to track $\mathcal{B}_X$ throughout the link to retrieve the basket $\mathcal{B}_{Y_4}$ of $Y_4$. The basket $\mathcal{B}_{X'}$ of $X'$ is a subset of $\mathcal{B}_{Y_4}$; that is, $\mathcal{B}_{X'}$ is $\mathcal{B}_{Y_4}$ minus the cyclic quotient singularities sitting inside the exceptional locus $E':=\mathbb{E}'\cap Y_4$. Moreover, if the determinant 
	\begin{equation} \label{determinant}
	\det\left(
	\begin{array}{c c}
	d_3 & d_4 \\
	-1 & -1
	\end{array}\right) = -1
	\end{equation}
	then $E'$ is contracted to a Gorenstein point $p' \in X'$, which does not contribute to the basket of $X'$. We claim that if $\Phi$ blows up the cyclic quotient singularity of highest degree, neither the flops nor the flips create a new cyclic quotient singularity of that degree. To prove this we refer to the notation used in the proof of Theorem \ref{it is a flip} below. First of all, the flop $\Psi_1$ leaves the basket of $Y_1$ unchanged, so $\mathcal{B}_{Y_1} = \mathcal{B}_{Y_2}$. Now for simplicity, suppose that $P_{y_2} \in Z_2$, that is $\Psi_2$ restricted to $Y_2$ is not an isomorphism. We have that $a,b,c < r$, and, since $\Phi$ blows up the cyclic quotient singularity of highest degree, $r \geq d_1$. In Theorem \ref{it is a flip} we prove that the map $\beta_2$ extracts a weighted $\PP^1$ or $\PP^2$, whose weights are among the following $(d_2-d_1, d_3-d_1,d_4-d_1)$, which are all strictly less than $r$ and $d_1$. Thus, the flip $\Psi_2$ never introduces singularities of degree $d_1$ or higher. A similar argument can be carried out when $d_1=d_2>d_3$. 
	
	On the other hand, if $\Phi$ blows up a cyclic quotient singularity of a lower degree that is, $r < d_1$, then the flips get rid of the one with higher degree, which will not be generated again for the same reason as above. Still referring to the notation used in the proof of Theorem \ref{it is a flip}, $\alpha_2$ contracts a weighted $\PP^1$ or $\PP^2$ with weights among $(d_1,a,b,c)$. The coordinate $y_1$ appears linearly in one entry of the matrix $M$ so locally analytically at $P_{y_1}$ one of the orbinates is eliminated. In addition, if $\alpha_2$ contracts a weighted $\PP^2$ the hypersurface thta is the intersection of $\PP^2$ and $Y_2$ always contains the coordinate point associated to $y_1$. Thus, the cyclic quotient singularities with higher degree are contracted by $\alpha_2$. 
	Therefore, the baskets $\mathcal{B}_X$ and $\mathcal{B}_{X'}$ are different, hence $X \not\cong X'$.  
	
	If the absolute value of the above determinant is greater or equal than 2, the divisorial contraction $\Phi'$ might create a new orbifold singularity with order equal to the absolute value of the determinant. This is a phenomenon that occurred already in \cite[Proposition 3.11]{BrownZucconi}: we refer to the latter for the proof of this fact. If $S$ is either a line or $\Proj^2$, $Y$ is $Y_4$. We conclude that $X$ cannot be isomorphic to $Y$ because their Picard ranks are different.
\end{proof}
The rest of this section is dedicated to proving part \ref{all links for Tom} of Theorem \ref{T outputs of sarkisov links}.  
Following the notation in \ref{Construction}, we call $Y_i$ the push-forward $\Psi_{i*}(Y_{i-1}) \subset \F_i$ of $Y_{i-1}$ via $\Psi_i$. The Cox rings of the $\F_i$ can be naturally identified, as they are isomorphic in codimension 1: similarly holds for the Cox rings of the $Y_i$, for which we may choose the same generators of the quotient ideal. Throughout this paper we identify these rings and these coordinates, for all $\F_i$ and $Y_i$.
\begin{theorem} \label{first step is a flop}
	The first step $\psi_1 \colon Y_1 \dashrightarrow Y_2$ of the birational link for $X$ consists of a number $n$ of simultaneous flops, that is equal to the number of nodes on $D \subset Z_1$.
\end{theorem}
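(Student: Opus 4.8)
The plan is to realise $\psi_1$ inside the variation of GIT on $\F_1$ and to read off its effect on $Y_1$ from the explicit equations computed in Lemma \ref{saturation is ideal of Y}. From the $1$-skeleton \eqref{scroll for tom}, the chamber carrying $Y_1$ is bounded on one side by the ray of $s$, corresponding to the divisorial extraction $\Phi$ given by $\left|\mathcal{O}\left(\binom{r}{1}\right)\right|$, and on the other by the ray spanned by $x_1,x_2,x_3$, corresponding to $\alpha_1$ given by $\left|\mathcal{O}\left(\binom{1}{0}\right)\right|$; thus $\psi_1$ is the wall-crossing at this second ray and fits into $Y_1\xrightarrow{\alpha_1}\mathbb{G}_1\xleftarrow{\beta_1}Y_2$ in the sense of Definition \ref{def Sarkisov link}. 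First I would show that $\alpha_1$ maps $Y_1$ birationally onto the codimension $3$ pfaffian Fano $Z_1$: by Proposition \ref{Phi and alpha1 proportional} the generators $h_1,\dots,h_5$ of Lemma \ref{saturation is ideal of Y} are, up to the chartwise $t$-rescaling built into $\alpha_1$, the maximal pfaffians $\Pf_i(M)$ evaluated at $(x_1,x_2,x_3,ty_1,\dots,ty_4)$, so $\alpha_1(Y_1)\subseteq Z_1$; and the unprojection relations $h_6,\dots,h_9$, i.e. $sy_i=g_i$, recover $s$ as a rational function wherever some $y_j\ne 0$, so $\alpha_1$ restricts to an isomorphism $Y_1\setminus E\xrightarrow{\sim}Z_1\setminus D$, where $E:=\{y_1=\dots=y_4=0\}\cap Y_1$.

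Next I would check that $\psi_1$ is an isomorphism in codimension $1$. The only divisor of $Y_1$ contracted by $\Phi$ or by $\alpha_1$ is $E$: by the Kawamata blow-up Theorem \ref{kawamata thm}, $E\cong\Proj(a,b,c)$ is the $\Phi$-exceptional divisor, contracted to $P_s$, whereas $\alpha_1$ — which does not involve the coordinate $s$ — maps $E$ isomorphically onto $D=\{y_1=\dots=y_4=0\}\cap Z_1\cong\Proj(a,b,c)$. Hence $\alpha_1$, and symmetrically $\beta_1$, contract no divisor; both are small birational morphisms onto $Z_1$, and $\psi_1$ is an isomorphism in codimension $1$ — a flip, flop, or antiflip — with $\alpha_1,\beta_1$ the two opposite small modifications over $Z_1$.

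It remains to locate the exceptional curves and to decide whether $K_{Y_1}$ is positive, zero, or negative on them. Since $\alpha_1$ is small and restricts to an isomorphism over $Z_1\setminus D$, its exceptional locus lies over the points of $D$ at which $D$ fails to be Cartier in $Z_1$, which for $X$ (hence $Z_1$) general in its Tom family are finitely many ordinary double points lying on $D$. This is the step I expect to be the main obstacle: using Lemma \ref{quasilinearity of entries} I would argue that in Tom$_i$ format the at least three entries $a_{k,l}$ with $k,l\ne i$ linear in the $y_j$, together with the entry with $k=i$ or $l=i$ linear in one of $x_1,x_2,x_3$, make the ideal of $Z_1$ near $D$, after eliminating the variables that appear linearly, generated by a small system whose part linear in $(y_1,\dots,y_4)$ has maximal rank off a length-$n$ subscheme of $D$; hence $Z_1$ is smooth along $D$ away from $n$ nodes. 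Over each node $p_m$ the threefold is analytically $\{xy=zw\}$ with $D$ one of the two planes through it, and $Y_1\to Z_1$ is the small resolution for which $D$ becomes Cartier, so $\alpha_1^{-1}(p_m)=C_m\cong\Proj^1$, and distinct nodes yield disjoint curves. Since a small resolution of an ordinary double point is crepant, $K_{Y_1}=\alpha_1^*K_{Z_1}$ near each $p_m$ and $K_{Y_1}\cdot C_m=0$; as $\beta_1\colon Y_2\to Z_1$ is the opposite small resolution of the same $n$ nodes, $\psi_1$ is the flop of $C_1,\dots,C_n$, and being a single wall-crossing on $\F_1$ with the $C_m$ pairwise disjoint it flops all of them at once. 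This gives the asserted $n$ simultaneous flops, with $n$ the number of nodes on $D\subset Z_1$.
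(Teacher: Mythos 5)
Your overall architecture (show $\alpha_1|_{Y_1}$ and $\beta_1|_{Y_2}$ are small, locate the contracted curves over the nodes, then check anticanonical degree zero) is close in spirit to the paper's proof, but two steps are genuinely defective. First, your codimension-one argument rests on identifying $E:=\{y_1=\dots=y_4=0\}\cap Y_1$ with the $\Phi$-exceptional divisor and on the claim that $\alpha_1$ maps it isomorphically onto $D$; neither is correct. Because the bottom weights were chosen so that $\delta_j=r+d_j$ is attained by a monomial of $g_j$ purely in $x_1,x_2,x_3$ (Lemma \ref{existence of mon tom}, Proposition \ref{shape of F tom}), each saturated unprojection equation has the shape $s y_j - q_j(x_1,x_2,x_3) - t(\cdots)$ with $q_j\neq 0$ and every $t$-term containing some $y$. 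Hence on $\{t=0\}\cap Y_1$ (where $s\neq 0$) one gets $y_j=q_j(x)/s$, generically nonzero: the Kawamata exceptional divisor is $\{t=0\}\cap Y_1$, a graph over $\Proj^2_{x_1,x_2,x_3}$, not $\{y=0\}\cap Y_1$. Conversely, on $\{y_1=\dots=y_4=0\}$ the same equations force $q_1(x)=\dots=q_4(x)=0$, so $\{y=0\}\cap Y_1=\alpha_1^{-1}(D)$ is the union of the $\Proj^1_{t,s}$-fibres over the finitely many points of $D$ where all $q_j$ vanish (the nodes): it is a curve, not a divisor, its image is the node set rather than $D$, and it is precisely the locus to be flopped. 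So "maps $E$ isomorphically onto $D$" would leave nothing to flop and contradicts your own later step placing the exceptional curves over points of $D$. (The conclusion that $\alpha_1|_{Y_1}$ is small is true, but it needs this corrected description; the loose sentence about $E$ earlier in the paper should not be taken at face value here.)

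Second, the step you yourself flag as "the main obstacle" is exactly where the paper's proof does its work, and it is not supplied: one must show that on \emph{both} sides of the wall the exceptional locus consists of exactly $n$ disjoint $\Proj^1$'s over the $n$ nodes. The paper does this by writing $(\Pf_2,\dots,\Pf_5)^T=A\cdot(y_1,\dots,y_4)^T$ on $\{t=s=0\}$, proving $2\le\rk A_p\le 3$ for all $p\in D$ (Lemma \ref{rank of A}, via the pfaffian syzygy), and identifying the rank-two locus with the nodes through $J(Z_1)|_D=\left(\begin{smallmatrix}0&0\\0&A\end{smallmatrix}\right)$; together with the unprojection equations (which restrict the base of $\{t=s=0\}\cap Y_2$ to the nodes) this is what cuts the $\Proj^3$-bundle $\{t=s=0\}$ down to curves. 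In your proposal the $\beta_1$-side is dismissed with "symmetrically", but the situation is not symmetric: the toric locus contracted by $\beta_1$ is a divisorial $\Proj^3$-bundle over $D$, and only this rank analysis makes $\beta_1|_{Y_2}$ small and identifies the extracted curves. Your crepancy-of-ODP-small-resolutions argument for $K_{Y_1}\cdot C_m=0$ is a legitimate alternative to the paper's direct computation (there $-K_{Y_i}=\{x_1=0\}$ and, by generality, the nodes avoid $\{x_1=0\}$), but it also presupposes the missing local analysis (that over each node $Y_1\to Z_1$ and $Y_2\to Z_1$ really are the two small resolutions with fibre a single $\Proj^1$), so as written the proof is incomplete.
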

To fix ideas and without loss of generality, assume $X$ is of Tom$_1$ type throughout this proof: then, $\Pf_2(M), \Pf_3(M), \Pf_4(M), \Pf_5(M)$  are linear in the generators of $I_D$ and $\Pf_1(M)$ is quadratic in those. The locus $\mathbb{B}_1 \in \F_2$ extracted by $\beta_1$ is defined by $\{ t=s=0 \}$ (cf \cite{BrownZucconi}), which is isomorphic to a weighted $\Proj^3$. Therefore there is a weighted  $\Proj^3$-bundle over $\Proj^2_{x_1,x_2,x_3} \cong D$. Thus, restricting to $\{ t=s=0 \}$ we have that $\left(\Pf_2, \Pf_3, \Pf_4, \Pf_5 \right)^T = A \cdot \left(y_1, y_2, y_3, y_4 \right)^T$
where $A$ is a $4 \times 4$ matrix defined as $A :=\left( \gamma_i(\Pf_j) \right)_{i=1\dots 4, j = 2 \dots 5}$
and $\gamma_i(\Pf_j) \in \C[x_1,x_2,x_3]$ is the coefficient of $y_i$ in $\Pf_j$. We need the following technical lemma.
\begin{lemma} \label{rank of A}
	For each point $p \in D$ the rank of $A_p := ev_p (A)$ is either 2 or 3.
\end{lemma}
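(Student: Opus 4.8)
The plan is to bound $\rk A_p$ from above and from below separately. For the \emph{upper bound} $\rk A_p \leq 3$: the matrix $A$ encodes the linear part (in the ideal variables $y_j$) of the four pfaffians $\Pf_2,\dots,\Pf_5$ along $D$, and these pfaffians are not independent — they satisfy the syzygy $M\cdot(\Pf_1,\dots,\Pf_5)^T=0$ coming from skew-symmetry of $M$. Restricting this syzygy to $\{t=s=0\}\cong D$, and using that in $\mathrm{Tom}_1$ format the entries $a_{i,j}$ with $i,j\neq 1$ lie in $I_D$ while $\Pf_1$ is \emph{quadratic} in the $y_j$ (hence vanishes to order $2$ on $D$), one sees that the four relations among $\Pf_2,\dots,\Pf_5$ given by rows $2,\dots,5$ of $M$ degenerate on $D$ to a nontrivial linear dependence; concretely, the vector $(a_{1,2},a_{1,3},a_{1,4},a_{1,5})|_D$ (the entries in the first row, which are \emph{not} forced into $I_D$) gives a syzygy $\sum_j a_{1,j}|_D\,\Pf_j=0$ modulo terms that vanish on $D$, forcing $A_p\cdot(a_{1,2},\dots,a_{1,5})^T_p$ into the kernel. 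Since by Lemma \ref{quasilinearity of entries} (and the general choice of $Z_1$) the row $(a_{1,j})$ does not vanish identically at a general $p\in D$, this produces a nonzero kernel vector, so $\rk A_p\leq 3$.

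For the \emph{lower bound} $\rk A_p \geq 2$: suppose $\rk A_p \leq 1$ at some $p\in D$. Then at least three of the four linear forms $\Pf_2|_D,\dots,\Pf_5|_D$ in the $y_j$ become proportional (or vanish) at $p$, so the locus $\{t=s=0\}\cap\{\Pf_2=\dots=\Pf_5=0\}$ has dimension $\geq 2$ in the fibre $\Proj^3_{y_1,\dots,y_4}$ over $p$. Together with the equation $\Pf_1=0$ (one further quadric in that $\Proj^3$) this would force the fibre of $Y_2\to D$ over $p$, that is of $Z_1$ restricted suitably, to have dimension $\geq 1$, contradicting that $Z_1$ is a $3$-fold with $D$ a divisor, equivalently contradicting quasi-smoothness / the expected dimension count for the complete intersection $D=\{y_1=\dots=y_4=0\}\subset Z_1$ near $p$. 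Here I would invoke the genericity of $Z_1$ in its Tom family together with Lemma \ref{quasilinearity of entries}, which guarantees enough of the $y_j$ appear linearly in entries off the $k$-th row so that the $4\times 4$ matrix $A$ cannot drop below rank $2$ on all of $D$.

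The main obstacle I expect is the lower bound: ruling out $\rk A_p\le 1$ requires using quasi-smoothness of $Z_1$ (or of $X$) at the points over $D$ in an essential way, and translating a rank drop of $A$ into a genuine singularity or dimension jump needs care about which of the $\Pf_j$ actually involve which $y_i$ linearly — this is exactly the bookkeeping controlled by Lemma \ref{quasilinearity of entries}, so the argument likely splits into a couple of cases according to the pattern of ideal weights $d_1\ge d_2\ge d_3\ge d_4$ and which entries of $M$ realise the quasilinear monomials. The upper bound, by contrast, is a clean consequence of the pfaffian syzygy restricted to $D$ and should be short.
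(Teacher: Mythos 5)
Your upper bound is essentially the paper's argument, but tidy the route to it: the syzygies coming from rows $2,\dots,5$ of $M$ pair the non-ideal entries $p_j$ with $\Pf_1\in I_D^2$ and ideal entries $a_{k,l}\in I_D$ with pfaffians lying in $I_D$, so they are trivial modulo $I_D^2$ and say nothing about the rows of $A$; the only usable relation is the first-row syzygy $p_1\Pf_2+p_2\Pf_3+p_3\Pf_4+p_4\Pf_5=0$, which is exactly the relation you write at the end of the paragraph and exactly what the paper uses. Two small caveats: the lemma is asserted at \emph{every} $p\in D$, so "the row $(a_{1,j})$ does not vanish at a general $p$" is not enough — you need that $(p_1,\dots,p_4)$ has no common zero on $D$, which holds because at such a zero the whole matrix $M$ would vanish modulo $I_D$ and $Z_1$ would be far worse than nodal there, excluded for a general Tom member.

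The genuine gap is in the lower bound. The mechanism you propose — a rank drop forces a positive-dimensional fibre, "contradicting quasi-smoothness of $Z_1$" or "the dimension count for $D\subset Z_1$" — cannot work as stated. First, $Z_1$ is \emph{not} quasi-smooth along $D$: it has nodes exactly on $D$, which is the whole point of the Type I unprojection setup, so an appeal to quasi-smoothness proves too much (it would give $\rk A_p=3$ everywhere, false at the nodes). Second, a one-dimensional fibre of $\mathbb{B}_1\cap Y_2\to D$ over an isolated point is not a contradiction — that is precisely what happens over each node and is what produces the flopped curves — and "$Z_1$ is a 3-fold with $D$ a divisor" is a global dimension statement unaffected by a rank drop at finitely many points. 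What actually excludes $\rk A_p\le 1$ is the singularity type: as the paper notes in the proof of Theorem \ref{first step is a flop}, $\rk A_p=\rk\big(J(Z_1)|_D\big)_p$, and a point of rank $\le 1$ would be a singular point of $Z_1$ strictly worse than an ordinary node, whereas a general member of a Tom family has only nodes on $D$ (a fact from the unprojection literature recorded in \cite{TJBigTable}); alternatively the paper argues structurally from the Tom format and Lemma \ref{quasilinearity of entries} that two columns of $A$, with entries in $\C[x_1,x_2,x_3]$, are independent at every point of $D$. So the missing ingredient in your plan is the "only nodes on $D$" property of general Tom members, not quasi-smoothness or a dimension count.
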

\begin{proof}
	The rank is at least 1. There are six syzygies involving the five pfaffians of $M$; in the notation set in \eqref{general form T1} one of them is $p_1 \Pf_2 + p_2 \Pf_3 + p_3 \Pf_4 + p_4 \Pf_5 = 0$. Therefore, at any point $p \in D$ it is possible to express one of the last four pfaffians in terms of the other three. This means that there are three equations left that are linear on $I_D$. Thus, $\rk(A_p) \leq 3$. 
	The restriction to $D$ kills all the monomials that come out from the non-linear (in $I_D$) terms of $\Pf_2,\Pf_3, \Pf_4, \Pf_5$. Since each of the $\Pf_j$ has at least one of the $y_i$ appearing at least once, then there are at least two linearly independent column vectors in $A$. Therefore, the entries of $A$ are all polynomials in $\C[x_1,x_2,x_3]$, so $\rk(A_p) \geq 2$.	
\end{proof}
\begin{proof}[Proof of Theorem \ref{first step is a flop}]
	We first prove that $\alpha_1$ contracts $n$ smooth rational curves. The locus $\mathbb{A}_1$ contracted by $\alpha_1$ is defined by $\{ y_1 = y_2 = y_3 = y_4 = 0 \}$ by \cite{BrownZucconi}. Since $Z_1$ is in Tom format, $Z_1 \cap \Imago(\alpha_1)$ restricted to $\mathbb{A}_1$ depends only on $x_1,x_2,x_3$, that is, it lies on $D$. Hence, over every node on $D$ there is a $\Proj^1$ having coordinates $t,s$. There are $n$ nodes on $D$ so $\alpha_1$ contracts $n$ lines.
	
	Now we need to prove that $\beta_1$ extracts $n$ lines. Since the locus $\mathbb{B}_1$ is fibred over $D$ with weighted $\Proj^3$ fibres, then at any point $p \in D$, if $ \rk(A_p)=2$ the image of $A$ is a 2-dimensional space in $\Proj^3$, which means that $\beta_1$ contracts a $\Proj^1 \subset \mathbb{B}_1 \cap Y_2$ to $p \in D$. Analogously, if $ \rk(A_p)=3$ the map $\beta_1$ is an isomorphism in a neighbourhood of a point $p' \subset \Proj^1 \subset \mathbb{B}_1$ to $p \in D$. This argument uses only the pfaffian equations of $X$, which contain all the necessary information about the flop. In fact, the unprojection equations do not play any role in the determination of the flop: note that $\mathbb{G}_1$ is a rank 1 toric variety of dimension 10 containing $w\Proj^6 \supset Z_1$. Its coordinates are $\xi_1 \coloneqq x_1$, $\xi_2 \coloneqq x_2$, $\xi_3 \coloneqq x_3$, $\upsilon_1 \coloneqq y_1 t$, $\upsilon_2 \coloneqq y_2 t$, $\upsilon_3 \coloneqq y_3 t$, $\upsilon_4 \coloneqq y_4 t$, $\sigma_1 \coloneqq s y_1$, $\sigma_2 := s y_2$, $\sigma_3 \coloneqq s y_3$, $\sigma_4 \coloneqq s y_4$. The unprojection equations globally eliminate the variable $s$ on $D$. In addition, on $D$, the Jacobian matrix of $Z_1$ is
	\begin{equation*}
	J(Z_1) |_D =
	\left(
	\begin{array}{c|c}
	0 & 0 \\
	\hline
	0 & A
	\end{array}
	\right) \; .
	\end{equation*}
	Therefore we deduce that for each point $p \in D$ we have $\rk(J(Z_1) |_D)_p = \rk(A_p)$.
	
	Lastly, we prove that $\psi_1$ is a flop. We have just shown that $\psi_1$ is an isomorphism in codimension 1, so we study the intersections $-K_{Y_1} \cap \mathbb{A}_1$ and $-K_{Y_2} \cap \mathbb{B}_1$. For both $i$ equal to 1 or 2, $-K_{Y_i}$ is $\{x_1=0\}$. On the other hand, none of the points in $\Sing(Z_1) \subset D$ satisfies the condition $x_1=0$. Hence, $-K_{Y_i} \cdot \Proj^1_{t,s}= 0$ for $i= 1,2$. 
\end{proof}
This proof is independent on the form of the right-hand-side of the unprojection equations: the information about the flop is all encoded in the geometry of $Z_1$, as expected.

We introduce the following configurations for the matrix $M$, which appear frequently in the rest of this paper. The argument holds independently on the Tom format. For some suitable positive integers $\sigma$ and $\tau$, define
\begin{enumerate}[label=\textbf{(\alph*)}]
	\item \label{case A} The entries $a_{2,4}, a_{2,5}, a_{3,4}, a_{3,5}$ all have weight $\pi$. Hence, in order to have homogeneous pfaffians and positive weights, the other weights of $M$ are
	\begin{equation} 
	\left(
	\begin{array}{c c c c}
	\sigma & \sigma & \pi + \sigma - \tau & \pi + \sigma - \tau \\
	\hline
	& \tau & \pi & \pi \\
	& & \pi & \pi \\
	& & & 2 \pi - \tau
	\end{array}
	\right) \; .
	\end{equation} 
	\item \label{case B} The entries $a_{2,5}, a_{3,4}$ both have weight $d_1=d_2$, while $a_{2,4}, a_{3,5}$ are free. Hence, the other weights of $M$ are
	\begin{equation} 
	\left(
	\begin{array}{c c c c}
	\sigma & \pi + \sigma - \upsilon & \pi + \sigma - \tau & 2 \pi + \sigma - \tau - \upsilon \\
	\hline
	& \tau & \upsilon & \pi \\
	& & \pi & 2 \pi- \upsilon \\
	& & & 2 \pi - \tau
	\end{array}
	\right) \; .
	\end{equation} 
\end{enumerate}

\subsection{Proof of \ref{1>2>3>4}} \label{proof of >>>}
The following theorem describes the flip that occurs when crossing the ray $\rho_{y_1}$. An identical argument applies when crossing $\rho_{y_2}$ if $d_1>d_2>d_3$.

\begin{theorem} \label{it is a flip}
	Suppose $d_1 > d_2$ and that $P_{y_1} \in Z_2$. Then, $\psi_2 \colon Y_2 \dashrightarrow Y_3$ is a flip.
\end{theorem}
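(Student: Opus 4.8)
The plan is to analyze the variation of GIT across the ray $\rho_{y_1}$ in the ray-chamber structure of $\F_2$, and then to check that this toric flip restricts to a genuine flip on $Y_2 \dashrightarrow Y_3$ by examining the discrepancies, i.e. the intersection of $-K$ with the contracted/extracted curves. Since crossing $\rho_{y_1}$ corresponds to unstabilizing the coordinate $y_1$ and stabilizing something else, I would first describe the two extremal contractions $\alpha_2 \colon \F_2 \to \mathbb{G}_2$ and $\beta_2 \colon \F_3 \to \mathbb{G}_2$ explicitly in terms of the grading \eqref{scroll for tom}. Concretely, $\alpha_2$ is given by a linear system proportional to the class of $y_1$, so its contracted locus $\mathbb{A}_2$ is $\{y_1 = 0\}$ intersected with the relevant stratum, a weighted projective space whose weights come from the remaining variables $x_i, y_2, y_3, y_4$; and $\beta_2$ extracts a weighted projective space whose weights are (after normalizing by subtracting the $y_1$-weight) among $(d_2 - d_1, d_3 - d_1, d_4 - d_1)$ together with the contribution from $t, s$. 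Because $d_1 > d_2 > d_3 > d_4$, these are the negative entries and the toric wall-crossing is of flipping type rather than a flop or a divisorial contraction.

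Next I would restrict to $Y_2$. By Lemma \ref{quasilinearity of entries} and the generality of $Z_1$, the coordinate $y_1$ appears linearly in some entry $a_{k,l}$ of $M$ with $k, l \neq 1$ (in the Tom$_1$ case), so locally analytically near $P_{y_1}$ this pfaffian equation can be used to eliminate one of the orbinates; hence $Y_2$ genuinely meets the toric contracted locus in a curve (or a finite union of curves), not in an empty set or a higher-dimensional piece. I would compute the dimension of $\mathbb{A}_2 \cap Y_2$ and $\mathbb{B}_2 \cap Y_3$ using the pfaffian and unprojection equations: the hypothesis $P_{y_1} \in Z_2$ (equivalently, that $\psi_2$ restricted to $Y_2$ is not an isomorphism) guarantees that the contracted locus on $Y_2$ is one-dimensional. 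The unprojection equations $s y_i = g_i$ are used to see how the extra coordinate $s$ interacts with the stratum, and — as in the proof of Theorem \ref{first step is a flop} — I expect them to simply eliminate $s$ or to impose no further obstruction along the relevant locus.

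The decisive computation is the sign of $K_{Y_2} \cdot C$ for $C$ the contracted curve(s) of $\alpha_2$ and $K_{Y_3} \cdot C'$ for the extracted curve(s) of $\beta_2$. Since $-K_{Y_i}$ is cut out by $\{x_1 = 0\}$ (the weight-1 variable), and since on the locus $\{y_1 = 0\} \cap \cdots$ the coordinate $x_1$ does not identically vanish while on the $\beta_2$-side the extracted $\PP^1$ lies in the locus where $x_1, x_2, x_3$ are the only non-vanishing coordinates so $-K \cdot C' = 0$ would be the flop case — here I must instead show $-K_{Y_2}\cdot C < 0$ and $-K_{Y_3}\cdot C' > 0$, which follows from the asymmetry $d_1 > d_2$: the $y_1$-class pairs strictly negatively against the contracted ray on one side and strictly positively on the other. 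This is a direct intersection-number bookkeeping on the toric ambient spaces, restricted via the observation that $Y_i$ meets the relevant torus-invariant stratum transversally in the expected dimension.

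The main obstacle I anticipate is controlling the restriction to $Y_2$ precisely: one must be sure that the quasilinear monomial in $y_1$ provided by Lemma \ref{quasilinearity of entries} really does cut down the fibre dimension by exactly one (so that a smooth rational curve, and not a surface, is flipped), and that no extra components of $\mathbb{A}_2 \cap Y_2$ appear away from $P_{y_1}$; this requires using the generality of $Z_1$ and checking the Jacobian rank along the stratum, much as the rank computation for the matrix $A$ in Lemma \ref{rank of A} did for the flop. Once the dimension count and the $-K$-pairing signs are established, the statement that $\psi_2$ is a flip is immediate. I would also note that the same argument with $y_2$ in place of $y_1$ handles the ray $\rho_{y_2}$ when $d_1 > d_2 > d_3$, since then $d_2$ is still strictly larger than $d_3, d_4$, preserving the flipping-type inequalities.
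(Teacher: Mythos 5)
Your overall strategy (toric wall-crossing at $\rho_{y_1}$, restriction to $Y_2$ via explicit eliminations, and a $-K$ sign check using $\{x_1=0\}$) is the same as the paper's, but as written the proposal has genuine gaps and errors at the decisive points. First, the toric picture is misidentified: the locus contracted by $\alpha_2$ is not $\{y_1=0\}$ but $\mathbb{A}_2=\{y_2=y_3=y_4=0\}\cong\Proj^4(d_1,r+d_1,a,b,c)$ with coordinates $t,s,x_1,x_2,x_3$, contracted to the point $P_{y_1}$, while $\beta_2$ extracts a weighted projective space in $y_2,y_3,y_4$ alone (weights among $d_1-d_2,d_1-d_3,d_1-d_4$), with no contribution from $t,s$. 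Second, and more seriously, the core of the proof is the dimension count showing that $\alpha_2$ restricted to $Y_2$ is a \emph{small} contraction of a curve, and you do not establish it: the claim that the hypothesis $P_{y_1}\in Z_2$ ``guarantees that the contracted locus on $Y_2$ is one-dimensional'' is false --- that hypothesis only gives non-emptiness of $Y_2\cap\mathbb{A}_2$. You flag this yourself as ``the main obstacle I anticipate,'' but a flagged obstacle is exactly the missing argument. The paper closes it by three separate codimension cuts inside $\Proj^4(d_1,r+d_1,a,b,c)$: the unprojection equation $sy_1=g_1$ eliminates $s$; a quasilinear entry of $M$ (Lemma \ref{quasilinearity of entries}) eliminates one orbinate; and a second unprojection equation containing a pure monomial in the orbinates and $t$ (Lemma \ref{existence of mon tom}) gives the third cut --- your plan uses only the first two --- and then it excludes the remaining possibilities (empty set via $P_{y_1}\in Z_2$; a single point via quasi-smoothness and $\Q$-factoriality, using that $\beta_2$ contracts the curve cut out by the quadratic pfaffian on the extracted side).

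Finally, your sign conditions are reversed. For $\psi_2\colon Y_2\dashrightarrow Y_3$ to be a flip (rather than an antiflip) one needs $-K_{Y_2}\cdot C>0$ on the curves contracted by $\alpha_2$ and $-K_{Y_3}\cdot C'<0$ on the curves contracted by $\beta_2$; this is what the paper verifies, using that $\{x_1=0\}\in\left|\mathcal{O}(-aK_{Y_2})\right|$ has class proportional to $\bigl(\begin{smallmatrix}1\\0\end{smallmatrix}\bigr)$ and is relatively ample for $\alpha_2$, hence strictly positive on the flipping curves (and lies outside the nef chamber of $\F_3$, hence is negative on the extracted curves). If you carried out your stated goal of proving $-K_{Y_2}\cdot C<0$ and $-K_{Y_3}\cdot C'>0$, the computation would contradict itself, since $\{x_1=0\}$ genuinely meets the $\alpha_2$-contracted curves positively; the accompanying justification via ``the $y_1$-class pairing'' conflates the wall class with the anticanonical class and does not give the needed inequality.
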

\begin{proof}
	Localise at the point $P_{y_1} \in Z_2$. So, after a row operation, the grading of $\F_2$ becomes
	\begin{equation*}
	\left(
	\begin{array}{c c c c c | c c c c}
	t & s & x_1 & x_2 & x_3 & y_1 & y_2 & y_3 & y_4 \\
	d_1 & r+d_1 & a & b & c & 0 & d_2-d_1 & d_3-d_1 & d_4-d_1 \\
	1 & 1 & 0 & 0 & 0 & -1 & -1 & -1 & -1
	\end{array}
	\right) \; .
	\end{equation*}
	The exceptional locus of $\alpha_2$ is $\mathbb{A}_2 = \{ y_2 = y_3 = y_4 = 0\}$, that is,
	\begin{equation*}
	\mathbb{A}_2 = \left(
	\begin{array}{c c c c c | c}
	t & s & x_1 & x_2 & x_3 & y_1 \\
	d_1 & r+d_1 & a & b & c & 0 \\
	1 & 1 & 0 & 0 & 0 & -1
	\end{array}
	\right) \cong \Proj^4(d_1,r+d_1,a,b,c)
	\end{equation*}
	and $\alpha_2(\mathbb{A}_2) = P_{y_1}$. 
	To prove that $\psi_2$ is a flip for $Y_2$, we show that the codimension of the intersection $Y_2 \cap \mathbb{A}_2$ is at least 3 in $\Proj^4(d_1,r+d_1,a,b,c)$. The unprojection equation $s y_1 = g_1$ allows one to eliminate $s$ locally above $P_{y_1} \in Z_2$. Thus, for a hypersurface $F$ isomorphic to the weighted $\Proj^3(d_1,a,b,c)$ defined by the unprojection equation relative to $y_1$ in which $y_1$ has been set at 1, we have $Y_2 \cap \mathbb{A}_2 \subset F \subset \Proj^4(d_1,r+d_1,a,b,c)$.
	Hence, $Y_2 \cap \mathbb{A}_2$ has at least codimension 1. By Lemma \ref{quasilinearity of entries}, in one of the pfaffian equations there is a monomial of the form $x_i y_1$, that is, locally at $P_{y_1}$, it is possible to eliminate $x_i$, i.e. $x_i$ can be expressed as a function of the other variables: suppose $i=1$. Thus, $Y_2 \cap \mathbb{A}_2 \subset \Proj^4(d_1,r+d_1,a,b,c)$ has at least codimension 2. From Lemma \ref{existence of mon tom} we deduce that there is another unprojection equation that contains monomials in the $x_i$ and $t$. Therefore,  $Y_2 \cap \mathbb{A}_2 \subset S \subset F \subset \Proj^4(d_1,r+d_1,a,b,c)$ where $S \cong \Proj^4(d_1,b,c)$: so, $Y_2 \cap \mathbb{A}_2$ has at least codimension 3 in $\Proj^4(d_1,r+d_1,a,b,c)$. To prove that the codimension is exactly 3 we need to show that the remaining equations define a curve in $S$, so we need to exclude the case in which they define a single point or the empty set. The vanishing locus of the remaining equations cannot be empty because $P_{y_1} \in Z_2$, so there must be an intersection between $Y_2$ and $\mathbb{A}_2$. In addition, $Y_2 \cap \mathbb{A}_2$ cannot be a single point either for the following reason. Since $X$ is quasi-smooth and $\Q$-factorial, the same holds for $Y_1$. Also $Y_2$ is quasi-smooth, but it is not isomorphic to $Y_1$ because $\beta_2 \colon Y_3 \rightarrow Z_2$ contracts the curve defined by the quadratic pfaffian equation (which is $\Pf_1$ if $M$ is in Tom$_1$ format).  Thus, by $\Q$-factoriality, $\beta_2$ must also contract a curve.
	
	The last thing to check is that the intersection of $-K_{Y_2}$ with $\mathbb{A}_2$ is positive and that the intersection of $-K_{Y_3}$ with $\mathbb{B}_2$ is negative. This is true because $\{ x_1 = 0\} \in \left| \mathcal{O}(-a K_{Y_2}) \right|$ is relatively ample with respect to $\alpha_2$, so it meets every curve positively.
\end{proof}

%On the other hand,
\begin{theorem} \label{skipped flip thm}
	If the point $P_{y_1} \not\in Z_2$, the restriction to $Y_2$ of the toric flip $\Psi_2 \colon \mathbb{F}_2 \dashrightarrow \mathbb{F}_3$ is an isomorphism $Y_2 \cong Y_3$.
\end{theorem}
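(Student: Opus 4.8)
The plan is to use that the toric flip $\Psi_2$ is an isomorphism away from the two strata it flips, together with the observation that the hypothesis $P_{y_1}\notin Z_2$ keeps both $Y_2$ and $Y_3$ disjoint from those strata. First I would recall the toric picture from the proof of Theorem~\ref{it is a flip}: localising the grading of $\F_2$ at the coordinate point $P_{y_1}$, the contraction $\alpha_2\colon\F_2\to\mathbb{G}_2$ contracts the stratum $\mathbb{A}_2=\{y_2=y_3=y_4=0\}$ to the point $P_{y_1}$, and because $\Psi_2$ is a toric flip its companion contraction $\beta_2\colon\F_3\to\mathbb{G}_2$ contracts the corresponding stratum $\mathbb{B}_2$ to the very same point $P_{y_1}$. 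Both $\alpha_2$ and $\beta_2$ restrict to isomorphisms $\F_2\setminus\mathbb{A}_2\xrightarrow{\ \sim\ }\mathbb{G}_2\setminus\{P_{y_1}\}$ and $\F_3\setminus\mathbb{B}_2\xrightarrow{\ \sim\ }\mathbb{G}_2\setminus\{P_{y_1}\}$; and by definition of the $2$-ray game $Z_2=\alpha_2(Y_2)=\beta_2(Y_3)\subseteq\mathbb{G}_2$.

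The key step is then immediate. If there were a point $q\in Y_2\cap\mathbb{A}_2$, then $\alpha_2(q)=P_{y_1}$ while also $\alpha_2(q)\in\alpha_2(Y_2)=Z_2$, forcing $P_{y_1}\in Z_2$ and contradicting the hypothesis; hence $Y_2\cap\mathbb{A}_2=\varnothing$. The same argument applied to $\beta_2$ gives $Y_3\cap\mathbb{B}_2=\varnothing$. Consequently $Y_2$ lies entirely in the locus on which $\alpha_2$ is an isomorphism, so $\alpha_2|_{Y_2}\colon Y_2\to Z_2$ is an isomorphism, and likewise $\beta_2|_{Y_3}\colon Y_3\to Z_2$ is an isomorphism. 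Composing, $\Psi_2|_{Y_2}=(\beta_2|_{Y_3})^{-1}\circ\alpha_2|_{Y_2}$ is an isomorphism $Y_2\xrightarrow{\ \sim\ }Y_3$, which is exactly the assertion.

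Given the machinery already developed, the argument is essentially formal, so the only content is in bookkeeping the toric inputs: that $\mathbb{A}_2$ (resp.\ $\mathbb{B}_2$) is precisely the exceptional locus of $\alpha_2$ (resp.\ $\beta_2$), and that both contract to the single point $P_{y_1}$. The former is the standard description of the maps attached to crossing the ray $\rho_{y_1}$ in the Mori chamber structure of $\F_1$ (cf.\ \cite{BrownZucconi} and the construction in Subsection~\ref{Construction}), and the latter follows from the toric-flip structure of $\Psi_2$ together with the identity $\alpha_2(\mathbb{A}_2)=P_{y_1}$ established in the proof of Theorem~\ref{it is a flip}. I do not expect any genuine obstacle beyond verifying these points carefully.
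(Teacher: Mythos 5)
Your proof is correct and follows essentially the same route as the paper: both argue that $\alpha_2(\mathbb{A}_2)=P_{y_1}$ while $\alpha_2(Y_2)=Z_2$, so $P_{y_1}\not\in Z_2$ forces $\mathbb{A}_2\cap Y_2=\varnothing$ and the toric flip restricts to an isomorphism on $Y_2$. The only difference is cosmetic: you also spell out the $\beta_2$-side check ($Y_3\cap\mathbb{B}_2=\varnothing$) and the composition $(\beta_2|_{Y_3})^{-1}\circ\alpha_2|_{Y_2}$, which the paper leaves implicit.
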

\begin{proof}
	The equations of $Z_2$ are the same as $Z_1$, albeit viewed in a different toric variety $\mathbb{G}_2$. If $P_{y_1} \not\in Z_2$ then there exists at least one pfaffian equation that is non-zero when evaluated at $P_{y_1}$. Moreover, $\alpha_2(\mathbb{A}_2) = P_{y_1}$; on the other hand, $\alpha_2(Y_2) = Z_2$. This means that the exceptional locus of the flip at the toric level does not intersect with $Y_2$, i.e. $\mathbb{A}_2 \cap Y_2 = \emptyset$.
\end{proof}

The nature of the weights of $M$ determine whether the hypotheses of either Theorem \ref{it is a flip} or Theorem \ref{skipped flip thm} are verified.

\begin{proposition} \label{skipped flip in >>>}
	Let $X$ be of Tom type. If the weights of $M$ fall in case \ref{case B}, then either the flip with base at $P_{y_1} \in Z_2$ or the flip with base at $P_{y_2} \in Z_3$ is an isomorphism.
\end{proposition}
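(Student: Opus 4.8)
The plan is to read off from the weight matrix in case \ref{case B} which of the two points $P_{y_1} \in Z_2$ or $P_{y_2} \in Z_3$ can actually lie on the corresponding codimension 3 variety, and to argue that at least one of them does not. Recall that in case \ref{case B} the entries $a_{2,5}$ and $a_{3,4}$ both carry weight $d_1 = d_2$, so (by Lemma \ref{quasilinearity of entries}, using generality of $Z_1$) the variables $y_1$ and $y_2$ appear linearly in those entries of $M$. First I would localise at $P_{y_1}$ (respectively $P_{y_2}$) as in the proof of Theorem \ref{it is a flip}, and expand the five pfaffians $\Pf_i(M)$ at that coordinate point. The point of the weight pattern in \ref{case B} is that the two ``free'' entries $a_{2,4}$ and $a_{3,5}$ occupy the positions that control whether the pfaffians through $P_{y_1}$ or through $P_{y_2}$ vanish identically; the Tom$_1$ constraint plus the assignment of $y_1,y_2$ to $a_{2,5},a_{3,4}$ forces one of the pfaffians evaluated at one of the two points to be a nonzero constant (or a nonzero monomial in the remaining orbinate), so that point is not on $Z$.

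Concretely, the key computation is: evaluate $\Pf_j(M)$ at $P_{y_1}$ by setting all coordinates except $y_1$ to zero. Since $y_1$ appears linearly in $a_{2,5}$ (with unit coefficient, after a coordinate change made possible by generality), and the remaining entries in the relevant $2\times 2$ minors of the skew matrix either vanish at $P_{y_1}$ or contribute the other linear term in $y_2$, the pfaffian expansion either produces $\pm 1$ (a nonzero constant) or $\pm y_2^{\,k}$ times a unit, both of which are nonzero at $P_{y_1}$ provided the residual monomial does not vanish there — and it does not, since we are at the coordinate point of $y_1$. By the weight bookkeeping in \ref{case B}, this cannot happen simultaneously at both $P_{y_1}$ and $P_{y_2}$: the entry $a_{2,4}$ of weight $\pi + \sigma - \upsilon$ and $a_{3,5}$ of weight $2\pi + \sigma - \tau - \upsilon$ are not both of a degree admitting a pure-orbinate monomial at the ``wrong'' point. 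Hence at least one of $P_{y_1}$, $P_{y_2}$ fails to lie on the corresponding $Z_i$.

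Once that is established, the proposition follows immediately from Theorem \ref{skipped flip thm}: if $P_{y_1} \notin Z_2$ then $\mathbb{A}_2 \cap Y_2 = \emptyset$ and the restriction of $\Psi_2$ to $Y_2$ is an isomorphism $Y_2 \cong Y_3$; symmetrically if $P_{y_2} \notin Z_3$ then the flip across $\rho_{y_2}$ restricts to an isomorphism. I would phrase the final sentence as ``therefore, by Theorem \ref{skipped flip thm}, whichever of the two points fails to lie on its $Z_i$ gives a flip that is the identity on the Fano side, completing the proof.''

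The main obstacle I anticipate is the explicit pfaffian bookkeeping: one has to be careful that the linear occurrence of $y_1$ (resp.\ $y_2$) guaranteed by Lemma \ref{quasilinearity of entries} really lands in the entry that makes some $\Pf_j$ nonvanishing at the coordinate point, rather than in an entry that gets killed in every $2\times 2$ minor contributing to the pfaffian. This is a finite check dictated entirely by the weight matrix in \ref{case B} and the Tom$_1$ format (the argument being format-independent up to relabelling), but it requires writing out the $5\times 5$ skew matrix with the weights of \ref{case B} inserted and verifying the claim pfaffian by pfaffian. I would handle it by noting that the syzygy $p_1\Pf_2 + p_2\Pf_3 + p_3\Pf_4 + p_4\Pf_5 = 0$ constrains which pfaffians can vanish simultaneously, exactly as in Lemma \ref{rank of A}, so that the non-vanishing of at least one pfaffian at one of the two points is forced.
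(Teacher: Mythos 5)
Your closing step is fine and matches the paper: once one of the coordinate points is known not to lie on the corresponding $Z_i$, Theorem \ref{skipped flip thm} immediately gives that the toric flip restricts to an isomorphism. The gap is in how you try to show that one of $P_{y_1}$, $P_{y_2}$ is off the variety. You start from ``$a_{2,5}$ and $a_{3,4}$ both carry weight $d_1=d_2$, so $y_1$ and $y_2$ appear linearly in those entries'' --- but this proposition sits in case \ref{1>2>3>4}, where $d_1>d_2>d_3>d_4$; there the common weight $\pi$ of $a_{2,5}$ and $a_{3,4}$ equals exactly one of $d_1$ or $d_2$ (never $d_3$ or $d_4$), so only \emph{one} ideal variable fits those two entries and it occupies \emph{both} of them. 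That is the whole point of the paper's argument: if $\pi=d_1$, then $y_1$ appears linearly in both $a_{2,5}$ and $a_{3,4}$, so the term $a_{2,5}a_{3,4}$ of $\Pf_1(M)$ contains the pure square $y_1^2$; by Lemma \ref{saturation is ideal of Y} this monomial survives in the equations of $Y_1$ with no $t$ factor, hence the relevant equation is nonzero at $P_{y_1}$, i.e.\ $P_{y_1}\notin Z_2$, and Theorem \ref{skipped flip thm} applies. If $\pi=d_2$ the same reasoning produces $y_2^2$ and the flip at $P_{y_2}$ is skipped; the ``either/or'' of the statement is exactly the dichotomy $\pi=d_1$ versus $\pi=d_2$.

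The substitute mechanism you propose does not close this gap. A linear occurrence of a variable in a single entry never makes a pfaffian nonvanishing at its coordinate point, because every term of a $4\times 4$ pfaffian is a product of two entries: you need the \emph{same} variable sitting in two entries that get multiplied together, which is precisely what configuration \ref{case B} supplies and what your argument never establishes. Evaluating a pfaffian at $P_{y_1}$ also cannot produce ``$\pm 1$'': the pfaffians are homogeneous of positive degree, so the evaluation is either zero or a nonzero multiple of a power of $y_1$, and proving the latter requires exhibiting the $y_1^2$ (or $y_2^2$) monomial. Likewise, the syzygy $p_1\Pf_2+p_2\Pf_3+p_3\Pf_4+p_4\Pf_5=0$ is no obstruction to all pfaffians vanishing simultaneously at a point, so it cannot force nonvanishing at one of $P_{y_1}$, $P_{y_2}$. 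Finally, even with the correct monomial in hand one must note (as the paper does via the saturation lemma) that it picks up no $t$ factor under the blow-up, so that the nonvanishing really holds for $Z_2$ (resp.\ $Z_3$) and not merely for $Z_1$; your proposal does not address this.
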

\begin{proof}
	In case \ref{case B} two ideal entries with the same weight are multiplied in $\Pf_1(M)$. Suppose that $\pi=d_1$. Thus, $y_1$ occupies both the entries $a_{2,5}$ and $a_{3,4}$. From Theorem \ref{saturation is ideal of Y} and since $y_1$ appears linearly in those entries, we deduce that the monomial $y_1^2$ is in the equations of $Y_1$. Therefore, repeating the proof of Theorem \ref{skipped flip thm}, we have that $\Psi_2$ restricted to $Y_2$ is an isomorphism. Same happens for $\pi=d_2$. The weight $\pi$ is never equal to $d_3$ or $d_4$.
\end{proof}
In this argument it is crucial that there is only one ideal generator having weight $d_1$. The concurrent presence of configuration \ref{case A} and of two distinguished ideal generators having the same weight leads to different consequences in \ref{1=2>3>4} and \ref{1=2>3=4}.

Although the majority of the Hilbert series of case \ref{1>2>3>4} falls in configuration \ref{case B}, it also happens that the weights of $M$ are in configuration neither \ref{case A} nor \ref{case B}. In this situation, both $\psi_1$ and $\psi_2$ are flips. In particular, this means that the mobile cone of $\F_1$ coincides with the mobile cone of $Y_1$. 
Theorem \ref{it is a flip} and Proposition \ref{skipped flip in >>>} can be also applied to the crossing of the wall adjacent to $d_2>d_3$.

Consider the rank 2 toric variety $\F_4$ in case \ref{1>2>3>4}, where $d_3>d_4$. The link terminates with a divisorial contraction.

\begin{lemma} \label{last map is div contr}
	Suppose that $\rho_X = 1 $. If $d_3>d_4$, the map $\Phi' \colon \F_4 \rightarrow \mathbb{G}_4$ is a divisorial contraction of $Y_4$ to a Fano 3-fold $X' \subset \Proj' \subset \mathbb{G}_4$.
\end{lemma}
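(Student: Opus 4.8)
The plan is to analyze the rank 2 toric variety $\F_4$ and its map $\Phi'$ defined by the linear system associated to the final ray $\rho_{y_4}$ in the ray-chamber structure, and to show that $\Phi'$ restricts on $Y_4$ to a divisorial contraction onto a Fano 3-fold. First I would write down the grading of $\F_4$ after the appropriate row operation localising at $P_{y_4}$, analogously to the matrix in the proof of Theorem \ref{it is a flip}: the grading becomes
\begin{equation*}
\left(
\begin{array}{c c c c c | c c c c}
t & s & x_1 & x_2 & x_3 & y_1 & y_2 & y_3 & y_4 \\
d_4 & r+d_4 & a & b & c & d_1-d_4 & d_2-d_4 & d_3-d_4 & 0 \\
1 & 1 & 0 & 0 & 0 & -1 & -1 & -1 & -1
\end{array}
\right) \; ,
\end{equation*}
so that $\Phi'$ contracts the locus $\mathbb{E}'=\{y_1=y_2=y_3=0\}$, a weighted $\Proj^4(d_4,r+d_4,a,b,c)$, to the point $P_{y_4}$. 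The first key step is to intersect $\mathbb{E}'$ with $Y_4$: using the unprojection equation $sy_4=g_4$ to eliminate $s$ above $P_{y_4}$, together with Lemma \ref{quasilinearity of entries} (a pfaffian entry linear in $y_4$, hence an $x_iy_4$ monomial eliminating some $x_i$), we cut $Y_4\cap\mathbb{E}'$ down. Since $d_3>d_4$ here, exactly one further pfaffian or unprojection equation survives to cut down by one more dimension, leaving $Y_4\cap\mathbb{E}'$ of dimension $2$, i.e. a divisor inside $Y_4$ contracted to a point; this is the $(2,0)$-type contraction in \ref{1>2>3>4}.

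Next I would verify that $\Phi'$ is actually a \emph{contraction} and not an antiflip or flop: this follows by checking the sign of $-K_{Y_4}\cdot C$ for a curve $C$ in a fibre of $\Phi'$, which is negative because $\{x_1=0\}\in|\mathcal{O}(-aK_{Y_4})|$ is relatively anti-ample with respect to $\Phi'$ (the opposite of the situation at the start of a link, where it was relatively ample with respect to $\Phi$) — exactly the mirror of the $-K$-intersection computation at the end of the proof of Theorem \ref{it is a flip}. Then I would identify the image: $\mathbb{G}_4$ is a rank $1$ toric variety, and its fixed-point subscheme away from $\mathbb{E}'$ gives an embedding $X'\subset\Proj'\subset\mathbb{G}_4$. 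To see $X'$ is Fano with terminal $\Q$-factorial singularities I would invoke that the link is played in the Mori category (Definition \ref{def Sarkisov link}) so $Y_4$ is terminal $\Q$-factorial, $\Phi'$ is a $K$-negative divisorial contraction, hence $X'=\Phi'(Y_4)$ is again terminal $\Q$-factorial with $\rho_{X'}=\rho_{Y_4}-1=1$ (using $\rho_X=1$ and that flops/flips preserve Picard rank, so $\rho_{Y_4}=\rho_{Y_1}=\rho_X+1=2$), and $-K_{X'}$ ample because $-K_{Y_4}$ is relatively ample over $\mathbb{G}_4$ and the whole 2-ray game terminates with a Mori fibre space to a point.

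The main obstacle I expect is the dimension count for $Y_4\cap\mathbb{E}'$: I must show it is \emph{exactly} $2$-dimensional, neither larger (which would make $\Phi'$ a flip/flop rather than a genuine divisorial contraction, contradicting that we have reached the end of the mobile cone) nor smaller, i.e. empty or a curve. Non-emptiness follows from $P_{y_4}\in Z_3\subset\mathbb{G}_3$ (the analogue of "$P_{y_1}\in Z_2$"), forcing an intersection; ruling out that it drops to a curve or point requires a careful count of which pfaffian and unprojection equations become identically zero or redundant on $\mathbb{E}'$ once $s$ and one $x_i$ have been eliminated, and this is where the hypothesis $d_3>d_4$ (as opposed to $d_3=d_4$, which produces a fibration in \ref{1>2>3=4}) is essential — with $d_3>d_4$ there is genuinely one extra equation left cutting $\mathbb{E}'$, and Lemma \ref{existence of mon tom} guarantees it is nontrivial (contains a monomial purely in the $x_i$). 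I would also need a short remark that, by generality of $X$ of Tom type, no extra unexpected vanishing occurs, so the contracted divisor is irreducible, matching the "$(2,0)$-type contracts a $w\Proj^2$ to a point" statement.
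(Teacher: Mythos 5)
Your proposal mis-identifies the final map, and this propagates through the whole argument. In the paper, $\Phi'\colon\F_4\rightarrow\mathbb{G}_4$ is induced by the linear system of bidegree $(d_3,-1)$ (see Lemma \ref{X' has smaller codim}), and its exceptional locus is the toric \emph{divisor} $\mathbb{E}'=\{y_4=0\}$, contracted to $P_{y_3}$ in case \ref{1>2>3>4} (cf.\ Section \ref{10985 T1}); hence $E'=\mathbb{E}'\cap Y_4$ is automatically of pure dimension $2$ once $Y_4\not\subset\{y_4=0\}$. What you describe --- a contraction of $\{y_1=y_2=y_3=0\}\cong\Proj^4(d_4,r+d_4,a,b,c)$ to the point $P_{y_4}$ --- is the picture of a small wall-crossing at the $y_4$ ray, which does not occur: the game ends at the $y_3$ wall, and the hypothesis $d_3>d_4$ is exactly what makes $y_3$ and $y_4$ span distinct rays, so that the last map is divisorial rather than the fibration of cases \ref{1>2>3=4}, \ref{1=2>3=4}. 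Consequently your central worry (that $Y_4\cap\mathbb{E}'$ might fail to be a surface) is aimed at the wrong locus, and your intersection-sign claim is backwards: for a curve $C$ contracted by $\Phi'$ the class of bidegree $(1,0)$ is positive on $C$ (it lies on the ample side of the $(d_3,-1)$ wall), so $-K_{Y_4}\cdot C>0$, as it must be for a Mori ($K$-negative) divisorial contraction; ``$\{x_1=0\}$ relatively anti-ample'' would contradict the link staying in the Mori category.

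The more serious gap is the Fano-ness of $X'$. You deduce ampleness of $-K_{X'}$ from ``$-K_{Y_4}$ is relatively ample over $\mathbb{G}_4$'' together with ``the 2-ray game terminates with a Mori fibre space'': the first statement only controls $K_{Y_4}$ on the curves contracted by $\Phi'$ and says nothing about curves of $X'$ away from $\Phi'(\mathbb{E}')$, while the second is precisely what the lemma is meant to establish, so the argument is circular. The paper supplies the missing global positivity as follows: since $\rho_X=1$ the exceptional divisor is irreducible and $\rho_{X'}=1$; then a curve $\Gamma\subset X'$ avoiding $\Phi'(\mathbb{E}')$ and the images of the flipped loci $\mathbb{B}_i$ (such curves exist since the excluded set has codimension $2$) lifts isomorphically to $\tilde{\Gamma}\subset Y_1$, and because $-K_{Y_1}=\alpha_1^*(-K_{Z_1})$ is nef and big --- every curve of $Y_1$ is either a flopping curve or strictly positive against $-K_{Y_1}$, so $Y_1$ is a weak Fano --- one gets $-K_{X'}\cdot\Gamma=-K_{Y_1}\cdot\tilde{\Gamma}>0$. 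Some version of this weak-Fano curve-lifting step is indispensable and is absent from your proposal; the parts of your plan that do survive (the local elimination analysis via Lemma \ref{quasilinearity of entries} and Lemma \ref{existence of mon tom}) belong to the flip analysis of Theorem \ref{it is a flip} and to the identification of $X'$ in Subsection \ref{identifying X'}, not to the proof of this lemma.
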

\begin{proof}
	Since  $\rho_X = 1 $, the exceptional divisor $\mathbb{E}'$ of $\Phi'$ is irreducible. Thus, $\rho_{X'} = 1 $ as well. Moreover, $X'$ is projective. In addition, $-K_{X'}$ is ample. Consider a curve $\Gamma$ in $X'$ that is not in the image of $\mathbb{E}'$ via $\Phi'$ and that is not in the image of the union of the right-hand-side contracted loci $\mathbb{B}_i$ of the flips. Such a curve can always be found because the set of curves of $X'$ lying in $\Phi'(\mathbb{E}')$ and the union of the proper transform of the $\mathbb{B}_i$ has codimension 2. The curve $\Gamma$ can be tracked back down to $Y_1$. Now, $-K_{Y_1} = \alpha_1^*(-K_{Z_1})$ and every curve in $Y_1$ is either a flopping curve or strictly positive against $-K_{Y_1}$ and contracted to $Z_1$. So, the divisor $-K_{Y_1}$ is nef and big, that is $Y_1$ is a weak Fano. Thus we have $-K_{X'} \Gamma = -K_{Y_1} \tilde{\Gamma} >0$, where $\tilde{\Gamma}$ is the proper transform of $\Gamma$, and is isomorphic to $\Gamma$.
\end{proof}

\subsubsection{Identifying the end of the link} \label{identifying X'}

By Lemma \ref{last map is div contr}, $\Phi'$ is a divisorial contraction to another Fano. A crucial observation is the following.

\begin{lemma} \label{X' has smaller codim}
	Let $X$ be a Fano 3-fold of Tom type and $p \in X$ a Tom centre such that the link for $X \ni p$ terminates with a divisorial contraction to another Fano 3-fold $X' \subset \Proj'$. Then, $\codim(X') < \codim(X)$.
\end{lemma}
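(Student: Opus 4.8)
The plan is to compute the codimension of $X'$ in its anticanonical embedding directly from the numerical data carried through the link, and to compare it with $\codim(X) = 4$. First I would recall that $X \subset w\Proj^7$ sits in codimension $4$ precisely because the anticanonical ring of $X$ has $8$ generators; equivalently, the Hilbert series of $X$ determines a minimal embedding in $w\Proj^7$. The key observation is that the Kawamata blow-up $\Phi$ extracts a divisor over the Type I centre $p$, which corresponds on the level of graded rings to \emph{removing} one of the generators — namely, the unprojection variable $s$ is traded for the exceptional parameter $t$, but more importantly the whole point of a Type I \emph{unprojection} is that $X$ has one more generator than the codimension-$3$ object $Z_1$. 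So one expects the link to run "in reverse" of an unprojection, decreasing the number of generators.

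The main steps I would carry out are: (1) identify $X' \subset \Proj' \subset \mathbb{G}_4$ concretely as the image of $Y_4$ under $\Phi'$, using that $\mathbb{G}_4$ is a rank $1$ toric variety (a weighted projective space) whose coordinate ring is generated by the $\Phi'$-invariant monomials in the Cox coordinates of $\F_4$; (2) count those generators. Following the pattern of the flop analysis (where $\mathbb{G}_1$ had coordinates $\xi_i = x_i$, $\upsilon_j = y_j t$, $\sigma_j = s y_j$), at the last wall the divisorial contraction $\Phi'$ contracts the locus cut out by the last surviving "$y$"-type coordinate, and the coordinate ring of $\mathbb{G}_4$ is generated by the remaining $x_i$, the single contracted-divisor parameter, and the products of $s$ with the surviving coordinates. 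The bookkeeping should show that $\Proj'$ has exactly $7$ coordinates, i.e. $X' \subset w\Proj^6$ and $\codim(X') = 3 < 4 = \codim(X)$; more precisely, I would argue that passing from $X$ to $X'$ eliminates the unprojection variable $s$ together with the fact that $D \subset Z_1$ was cut by four equations, so that the net change in the number of anticanonical generators is $-1$ (one generator $s$ removed, the blow-up parameter not contributing a new generator to the anticanonical ring of $X'$ since $-K_{X'}$ is the relevant polarisation pushed down).

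Concretely, I would phrase it as: the anticanonical ring $R(X', -K_{X'})$ is a subring of (a localisation/push-forward of) $R(Y_4)$, and the minimal generators of $R(X')$ are in bijection with the rays of $\mathbb{G}_4$ other than the one blown down by $\Phi'$; by the explicit description of the $\F_i$ (all isomorphic in codimension $1$ with the grading \eqref{scroll for tom}) there are $9$ Cox generators $t,s,x_1,x_2,x_3,y_1,\dots,y_4$, the wall-crossings do not change this count, and $\Phi'$ quotients by a $\C^\times$ leaving $8$ invariant coordinates, but the exceptional divisor $\mathbb{E}'$ of $\Phi'$ is not an anticanonical generator of $X'$, so $X'$ embeds in $w\Proj^6$. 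Since every codimension-$4$ Fano of the table \cite{TJBigTable} genuinely requires $8$ generators, $\codim(X') \le 3 < 4 = \codim(X)$.

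The main obstacle I anticipate is making rigorous the claim that the exceptional parameter of $\Phi'$ drops out of the \emph{anticanonical} graded ring of $X'$ — i.e. that $-K_{X'}$ is the ample generator of $\Cl(X')$ and that its section ring is generated by the images of $x_1,x_2,x_3,s y_j$ and one further coordinate, giving exactly $7$. This requires controlling the push-forward $\Phi'_*\mathcal{O}$ and checking that no monomial divisible by the $\Phi'$-contracted coordinate survives as a new minimal generator; the preceding lemmas (especially that $Y_1$ is a weak Fano with $-K_{Y_1} = \alpha_1^*(-K_{Z_1})$, and the computations of $-K_{Y_i}\cdot \Proj^1$) provide the tools, but the degree accounting must be done case by case against the weights in \eqref{scroll for tom}. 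A cleaner alternative, which I would fall back on if the direct count is messy, is to note that the whole link is by construction a $2$-ray game that \emph{undoes} a Type I unprojection at the level of the ambient toric varieties: the endpoint ambient space $\mathbb{G}_4$ (resp.\ $w\Proj'$) has one fewer torus factor contributing to the relevant polarisation than $w\Proj^7$, so $\codim(X') = \codim(Z_1) - (\text{correction}) < \codim(X)$, and since $\codim(Z_1)=3$ one gets the strict inequality immediately.
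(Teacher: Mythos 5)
Your overall direction --- exhibit the ambient space of $X'$ and count its coordinates --- is the same as the paper's, but the step that does the actual work is either missing or incorrect in your write-up. The paper's proof is short and concrete: $\Phi'\colon \F_4 \rightarrow \mathbb{G}_4$ is given by the linear system of bidegree $(d_3,-1)$, so the coordinates of $\mathbb{G}_4$ are monomials of the form $w\, y_4^{k_w}$ in the Cox coordinates, with the exceptional coordinate $y_4$ playing the role of the blow-up parameter; and the unprojection equation $s y_4 = g_4$ becomes a relation among these coordinates that \emph{globally eliminates} the coordinate $s' = s y_4^{\varsigma}$ of $\mathbb{G}_4$. Hence $X'$ sits inside at most a weighted $\Proj^6$, giving $\codim(X') \leq 3 < 4 = \codim(X)$. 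You do assert that ``the unprojection variable $s$ is eliminated'', but you never identify the mechanism --- the equation $s y_4 = g_4$ --- attributing it instead to the vague remark that $D \subset Z_1$ is cut out by four equations; without pinning down which equation eliminates which coordinate of $\mathbb{G}_4$, your count does not go through.

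Moreover, your explicit count rests on a false principle: passing to the GIT quotient at a ray does not simply leave ``$8$ invariant coordinates'' out of the $9$ Cox generators. The number of generators of the ring of sections of multiples of the ray depends on the combinatorics of the weights; the paper's own $\mathbb{G}_1$ is a rank $1$ toric variety with $11$ coordinates ($\xi_i$, $\upsilon_j$, $\sigma_j$). In the situation of the lemma one gets at most $8$ coordinates because $y_4$ is absorbed as the blow-up parameter and each remaining Cox variable contributes a generator of the form $w\,y_4^{k_w}$; your additional subtraction of ``the exceptional divisor coordinate'' then mislabels what happens, and is inconsistent with your earlier count (removing $s$, the exceptional coordinate, and one coordinate for the $\C^\times$-quotient would land you at $6$, not $7$). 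Finally, routing the argument through minimal generation of the anticanonical ring of $X'$ asks for more than the lemma needs --- it only requires exhibiting the embedding $X' \subset \Proj' \subset \mathbb{G}_4$ with $\Proj'$ of dimension at most $6$, which the construction of $\Phi'$ plus the elimination provides directly --- and you acknowledge you cannot complete that step; the ``undoing the unprojection'' fallback is a heuristic, not a proof, since $X'$ is not in general obtained from $Z_1$ by reversing the unprojection.
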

\begin{proof}
	The map $\Phi' \colon \F_4 \rightarrow \mathbb{G}_4$ is induced by the linear system $\big| \mathcal{O} \big( \substack{\scriptscriptstyle{d_3}\\\scriptscriptstyle{-1}} \big) \big|$. The equations of $Y_4$ constitute relations among the new coordinates of $\mathbb{G}_4$, that is, some of the equations of $Y_4$ eliminate (globally) some of the coordinates of $\mathbb{G}_4$. The global elimination of the variable $s'=s y_4^\varsigma$ of $\mathbb{G}_4$, for some exponent $\varsigma$, always happens due to the equation $s y_4 = g_4$.
	This phenomenon may occur for other coordinates too, depending on each specific case. This shows that $\Proj'$ is at most a weighted $\Proj^6$.
\end{proof}

It is also possible to track down the evolution of the basket of singularities of $X$ along the link, in order to deduce the one for $X'$. We refer to the proof of Theorem \ref{T outputs of sarkisov links}, \ref{X not bir rigid, X not iso to X'} for this. Its basket and its ambient space determine the Hilbert series of $X'$ univocally. 
We give an example of how to find $\mathcal{B}_{X'}$ in Section \ref{10985 T1}.

The equations of $X'$ can be found by rewriting the equations of $Y_4$ in terms of the new coordinates of $\mathbb{G}_4$, and by excluding the ones used to perform the global elimination. Usually, $X'$ is a special member in the family associated to its Hilbert series. We show this explicitly in the examples of Section \ref{examples}.

\subsection{Proof of \ref{1>2=3>4}}

Suppose $M$ has weights as in \ref{case B}, only for the two Hilbert series \#1218 and \#1413. For both, the equations of $Y_2$ have a pure monomial in $y_1$ (similarly to the phenomenon described in Theorem \ref{skipped flip thm}). Thus, the following holds.

\begin{theorem} \label{d2 = d3 special case}
	Consider the Hilbert series \#1218, \#1413 and the Fano 3-fold defined by Tom$_1$ for both. Then, their respective birational links evolve as follows: $\psi_1$ is a flop, $\Psi_2$ restricts to an isomorphism $\psi_2$ on $Y_2$, $\phi'$ is a divisorial contraction over $\Proj_{y_2,y_3}^1 \subset X'$.
\end{theorem}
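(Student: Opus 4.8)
The plan is to treat the two Hilbert series \#1218 and \#1413 as instances where the general machinery of the previous subsections specializes, rather than to redo everything from scratch. The first step, that $\psi_1$ is a flop contracting $n$ smooth rational curves over the nodes of $D \subset Z_1$, is immediate from Theorem \ref{first step is a flop}: it holds for any Fano 3-fold of Tom type and is insensitive to the right-hand sides of the unprojection equations, so nothing special to \#1218, \#1413 is needed here. I would just invoke it and move on.

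The heart of the argument is the claim that $\Psi_2$ restricts to an isomorphism $\psi_2$ on $Y_2$. Here the plan is to mimic the proof of Proposition \ref{skipped flip in >>>} verbatim, using the hypothesis that $M$ has weights as in configuration \ref{case B}. In \ref{case B} the two ideal entries $a_{2,5}$ and $a_{3,4}$ carry the same weight, which can be set equal to $d_1$ (after checking, as in Proposition \ref{skipped flip in >>>}, that this weight is neither $d_3$ nor $d_4$); by generality of $Z_1$ the variable $y_1$ then occupies both these entries linearly. The product $a_{2,5}a_{3,4}$ appears in $\Pf_1(M)$, so by Lemma \ref{saturation is ideal of Y} the monomial $y_1^2$ survives in the equations of $Y_1$, hence of $Y_2$. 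Exactly as in the proof of Theorem \ref{skipped flip thm}, this forces $P_{y_1} \notin Z_2$: the locus $\mathbb{A}_2$ contracted by the toric flip $\Psi_2$ does not meet $Y_2$, so $\psi_2\colon Y_2 \to Y_3$ is an isomorphism. The only thing I would verify by hand, case by case for the two IDs, is that the weight data of $M$ recorded in \cite{TJBigTable} for \#1218 and \#1413 indeed fit pattern \ref{case B} with $d_2 = d_3$; this is a finite check that I would relegate to a sentence referencing the table.

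Finally, for the last map I would argue that since the flip $\psi_2$ was skipped, the link crosses directly to the wall adjacent to the ray where $d_2 = d_3$ forces $x_1,\dots$ and $y_2,y_3$ to generate the relevant systems, and $\Phi'$ is the divisorial contraction induced by $\big|\mathcal{O}\big(\substack{\scriptscriptstyle d_2\\\scriptscriptstyle -1}\big)\big|$. Here I would reuse Lemma \ref{last map is div contr} (noting $\rho_X = 1$ for these families, or rather that the exceptional divisor is irreducible because $\Psi_2$ contributed nothing) and Lemma \ref{X' has smaller codim} to conclude that $\Phi'$ contracts $\mathbb{E}'$ to a copy of $\Proj^1_{y_2,y_3}$ sitting inside a Fano 3-fold $X' \subset \Proj'$ of smaller codimension; the image is a line (not a point) precisely because $d_2 = d_3$ makes $\mathbb{E}' \cap Y_4$ fibred over $\Proj^1_{y_2,y_3}$, which is the $(2,1)$-type contraction of case \ref{1>2=3>4}. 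The main obstacle is the middle step: one must be sure that configuration \ref{case B} genuinely applies to \#1218 and \#1413 and that the $y_1^2$ monomial is not accidentally cancellable in the saturation — but Lemma \ref{saturation is ideal of Y} already guarantees the generators $h_i$ are an honest generating set, so the $y_1^2$ term is genuinely present, and the rest is bookkeeping against \cite{TJBigTable}.
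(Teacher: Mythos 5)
Your proposal follows essentially the same route as the paper: $\psi_1$ is a flop by Theorem \ref{first step is a flop}; the configuration \ref{case B} weights (checked against \cite{TJBigTable}) force $y_1^2$ into $\Pf_1(M)$ and hence into the equations of $Y_2$, so $\Psi_2$ restricts to an isomorphism exactly as in Theorem \ref{skipped flip thm} and Proposition \ref{skipped flip in >>>}; and the last map is a divisorial contraction onto $\Proj^1_{y_2,y_3}\subset X'$ via Lemma \ref{last map is div contr}. The extra care you take (irreducibility of $\mathbb{E}'$ in place of the $\rho_X=1$ hypothesis, and pinning the shared entry weight to $d_1$ rather than $d_2=d_3$) is sound bookkeeping but not a different argument.
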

\begin{proof}
	By Theorem \ref{first step is a flop} $\psi_1$ is a flop. The weights of $M$ of the two Hilbert series are as in \ref{case B}. Therefore, $\Pf_1(M)$ contains $y_1^2$. Analogously to Theorem \ref{skipped flip thm}, $\psi_2$ is an isomorphism. The last map is a divisorial contraction to $X'$ by Lemma \ref{last map is div contr}. Note that $\Proj_{y_2,y_3}^1 \subset X'$ in both cases, so there is one divisorial contraction to $\Proj_{y_2,y_3}^1$.
\end{proof}

None of the other Hilbert series in \ref{1>2=3>4} comes from $M$ with \ref{case B} weights. In this instance, the flip $\psi_2$ is performed by $Y_2$ too, and it is followed by a divisorial contraction to $X'$.

\begin{theorem}
	Let $Z_1$ be defined by $M$ in Tom format with weights not in \ref{case B}. Then the birational link for $X$ is formed of: a flop, a flip, and a divisorial contraction to $\Proj_{y_2:y_3}^1 \subset X'$.
\end{theorem}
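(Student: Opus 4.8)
The plan is to follow exactly the template established by the proof of Theorem \ref{d2 = d3 special case}, but tracking the point $P_{y_1}\in Z_2$ rather than skipping the flip. We split the link into its three constituent maps and handle each with results already proved.

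First I would recall that the Kawamata blow-up $\Phi$ followed by $\alpha_1$ is the flop $\psi_1$ by Theorem \ref{first step is a flop}: since $X$ is of Tom type and $p$ is a Tom centre, $\psi_1$ consists of $n$ simultaneous flops, $n$ being the number of nodes on $D\subset Z_1$. This step is entirely independent of the format of the right-hand sides of the unprojection equations and of the weight configuration of $M$, so it applies verbatim. Next, the hypothesis $d_1>d_2$ places us in the setting of Theorem \ref{it is a flip}, provided we can show $P_{y_1}\in Z_2$. This is where the assumption that $M$ is \emph{not} in case \ref{case B} enters: the obstruction to $P_{y_1}\in Z_2$ found in Proposition \ref{skipped flip in >>>} was precisely that in case \ref{case B} a pure monomial $y_1^2$ appears in $\Pf_1(M)$, forcing $\mathbb{A}_2\cap Y_2=\emptyset$ by the mechanism of Theorem \ref{skipped flip thm}. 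Outside case \ref{case B}, no ideal generator of weight $d_1$ gets squared in $\Pf_1(M)$, so by Lemma \ref{quasilinearity of entries} and Lemma \ref{existence of mon tom} the pfaffian and unprojection equations do not contain a pure power of $y_1$; hence $P_{y_1}$ lies on $Z_2$, and Theorem \ref{it is a flip} gives that $\psi_2\colon Y_2\dashrightarrow Y_3$ is a flip. Because $d_2=d_3$, crossing the next wall $\rho_{y_2}$ and crossing $\rho_{y_3}$ coincide as a single toric operation; I would remark (as the paper does after Theorem \ref{it is a flip}) that this identifies the wall adjacent to $d_2=d_3$, so only one flip occurs here, not two.

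Finally, for the last map, I would invoke Lemma \ref{last map is div contr} (with $\rho_X=1$, as the paper's running hypothesis in this subsection allows, or more generally the argument that $\mathbb{E}'$ is irreducible so $\Phi'$ is a contraction to a Fano) to conclude that $\Phi'\colon \F_4\to\mathbb{G}_4$ contracts $Y_4$ to a Fano 3-fold $X'$. To pin down that the contracted locus maps onto $\Proj^1_{y_2:y_3}$, I would compute the image of the exceptional divisor $\mathbb{E}'=\{t=s=0\}$ under the linear system $\big|\mathcal{O}\big(\substack{\scriptscriptstyle d_3\\\scriptscriptstyle -1}\big)\big|$ defining $\Phi'$: on this locus the only surviving monomials of the right weight are the linear ones in $y_2,y_3$ (since $d_2=d_3$ and $d_3>d_4$ means $y_4$ does not contribute, while $y_1$ was eliminated along the flip), so $\Phi'(\mathbb{E}')\cong\Proj^1_{y_2:y_3}\subset X'$. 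This matches the $(2,1)$-type contraction asserted in \ref{1>2=3>4}.

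The main obstacle is the middle step: verifying that $P_{y_1}\in Z_2$ whenever $M$ is not in case \ref{case B}. One must rule out that some other pure monomial in $y_1$ — coming either from a different pfaffian or, more subtly, from an unprojection equation after saturation as in Lemma \ref{saturation is ideal of Y} — sneaks into the ideal of $Y_2$ and empties the exceptional locus. The point is that such a monomial would require two ideal entries of weight $d_1$ to be multiplied together (for a pfaffian) or a correspondingly degenerate right-hand side $g_j$, and the classification in \cite{TJBigTable} shows this does not happen for the relevant Hilbert series once case \ref{case B} is excluded; combined with the linearity statements of Lemma \ref{quasilinearity of entries} and Lemma \ref{existence of mon tom}, this leaves $P_{y_1}$ on $Z_2$ and the flip genuinely present on $Y_2$.
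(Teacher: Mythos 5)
Your proposal follows essentially the same route as the paper: the flop comes from Theorem \ref{first step is a flop}, the exclusion of configuration \ref{case B} guarantees that no pure power of $y_1$ survives in the equations, hence $P_{y_1}\in Z_2$ and Theorem \ref{it is a flip} applies, and the link ends with the divisorial contraction of Lemma \ref{last map is div contr} onto $\Proj^1_{y_2:y_3}\subset X'$ (the paper's own proof is exactly this, stated in one line and referred back to Theorem \ref{d2 = d3 special case}). One detail in your added verification is off: the exceptional divisor of $\Phi'$ is not $\{t=s=0\}$ --- that locus is codimension $2$ in the ambient toric variety and is precisely the locus $\mathbb{B}_1$ extracted by $\beta_1$ at the flop stage --- rather it is the divisor $\{y_4=0\}$ (compare the explicit $\Phi'$ in the example of Subsection \ref{10985 T1}), on which the sections of $\big|\mathcal{O}\big(\substack{\scriptscriptstyle d_2\\\scriptscriptstyle -1}\big)\big|$ reduce to $y_2,y_3$, so that $\mathbb{E}'\cap Y$ is a surface contracted onto $\Proj^1_{y_2:y_3}$; with that correction (and noting the last chamber here is $\F_3$, since only one flip occurs), your argument matches the paper's.
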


\begin{proof}
	The point $P_{y_1} \in Z_2$ because the weights of $M$ are not as in \ref{case B}. We connect this proof to the one of Theorem \ref{d2 = d3 special case}.
\end{proof}

\subsection{Proof of \ref{1=2>3>4} and \ref{1=2>3=4}}

Now we study the case where $d_1 = d_2$. Both \ref{1=2>3>4} and \ref{1=2>3=4} share the same behaviour when crossing of the ray $\rho_{y_1,y_2}$.

\begin{theorem} \label{theorem d1 = d2}
	Suppose $d_1 = d_2$. Then, there are two analytic flips (\textit{simultaneous flips} as in Remark \ref{simultaneous flips}) based at two distinct points in $Z_2$. 
\end{theorem}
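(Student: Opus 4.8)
The plan is to mimic the localisation argument of Theorem \ref{it is a flip}, but now tracking two coordinate points simultaneously. First I would perform the row operation on the grading \eqref{scroll for tom} that makes $y_1$ (equivalently $y_2$) have bottom weight $0$; since $d_1 = d_2$, the same row operation normalises \emph{both} $y_1$ and $y_2$ at once, and the grading of $\F_2$ becomes
\begin{equation*}
\left(
\begin{array}{c c c c c | c c c c}
t & s & x_1 & x_2 & x_3 & y_1 & y_2 & y_3 & y_4 \\
d_1 & r+d_1 & a & b & c & 0 & 0 & d_3-d_1 & d_4-d_1 \\
1 & 1 & 0 & 0 & 0 & -1 & -1 & -1 & -1
\end{array}
\right) \; .
\end{equation*}
The toric flip $\Psi_2$ across the ray $\rho_{y_1,y_2}$ has exceptional locus $\mathbb{A}_2 = \{y_3 = y_4 = 0\} \cong \Proj^4(d_1, r+d_1, a, b, c)$ on the $\F_2$ side, contracted to the line $\Proj^1_{y_1,y_2} \subset \mathbb{G}_2$, rather than to a point. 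The first key step is therefore to understand $Y_2 \cap \mathbb{A}_2$ not globally but \emph{fibrewise} over this $\Proj^1_{y_1,y_2}$: away from the finitely many points of $\Proj^1_{y_1,y_2}$ lying on $Z_2$, the fibre is empty; at a point of $\Proj^1_{y_1,y_2} \cap Z_2$ one localises (say at $P_{y_1}$, using the residual coordinate $y_2/y_1$ as an affine parameter) and runs exactly the codimension count of Theorem \ref{it is a flip}. Since $d_1 = d_2$, this parameter behaves like the other orbinates $x_j$ of weight $< r$ under pullback, so it does not interfere with the elimination steps there.

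The second step is to verify that there really are two such points. By Lemma \ref{quasilinearity of entries}, for a general Tom$_i$ matrix $y_1$ and $y_2$ appear linearly in suitable entries; since $d_1 = d_2$, the quadratic pfaffian (say $\Pf_1(M)$ in Tom$_1$ format) contains a binary quadratic form $q(y_1, y_2)$ in these two variables — unless the weights force configuration \ref{case A} or \ref{case B}, which is where one must be careful. When the weights are \emph{not} of type \ref{case B} (the type \ref{case A} / generic case for $d_1 = d_2$), this quadratic form $q$ is nondegenerate for general $Z_1$, so $\{q = 0\} \subset \Proj^1_{y_1,y_2}$ consists of two distinct reduced points, and over each of them one gets an honest flip by the localised argument; the remaining pfaffian equations cut out a curve there, by the same quasi-smoothness / $\Q$-factoriality dichotomy used at the end of the proof of Theorem \ref{it is a flip} ($\beta_2$ contracts the curve defined by the quadratic pfaffian, so by $\Q$-factoriality it must contract a curve on $Y_3$ too). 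The two points are interchanged by the monodromy of the pair of roots of $q$, which is precisely the sense in which the flip is algebraically irreducible but the intersection with $Y_2$ has two connected components (Remark \ref{simultaneous flips}). Finally, the discrepancy/ampleness check is unchanged from Theorem \ref{it is a flip}: $\{x_1 = 0\} \in |\mathcal{O}(-aK_{Y_2})|$ is $\alpha_2$-ample, hence positive on both contracted curves, so each analytic branch is a genuine flip.

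I expect the main obstacle to be the second step: showing that the two base points are genuinely distinct (not an infinitely-near pair) and that each carries a \emph{curve} rather than a fat point. This is a generality statement about the Tom family, and the cleanest route is to isolate the binary quadratic form $q(y_1, y_2)$ appearing in the saturated ideal of $Y_1$ via Lemma \ref{saturation is ideal of Y} and Lemma \ref{existence of mon tom}, and argue that for a general choice of the free entries of $M$ it has distinct roots and that, after localising at each root, the elimination of $s$ (via the relevant unprojection equation), of one $x_j$ (via a quasilinear pfaffian monomial, Lemma \ref{quasilinearity of entries}) and of a further variable (via a second unprojection equation with $x$-monomials, Lemma \ref{existence of mon tom}) leaves a one-dimensional locus. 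The subtlety is purely that two points must be controlled at once and that type \ref{case B} weights — which would put $y_1^2$ (or $y_2^2$) alone in the ideal and degenerate $q$ — must be explicitly excluded, as they are in the surrounding discussion and fall instead under the isomorphism scenario of Theorem \ref{skipped flip thm}.
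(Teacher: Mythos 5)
There is a genuine gap: you exclude configuration \ref{case B} from the theorem, asserting that for $d_1=d_2$ those weights "put $y_1^2$ (or $y_2^2$) alone in the ideal and degenerate $q$", so that those families fall under the isomorphism scenario of Theorem \ref{skipped flip thm}. This transplants the $d_1>d_2$ phenomenon (Proposition \ref{skipped flip in >>>}) into a setting where it does not apply, and it would leave the theorem unproved — indeed false, on your reading — for the \ref{case B} families, which the statement covers. The point you miss is the one the paper flags right after Proposition \ref{skipped flip in >>>}: the skipped-flip argument is only valid when there is a \emph{single} ideal generator of weight $d_1$. When $d_1=d_2$ there are two generators $y_1,y_2$ of that weight, so in configuration \ref{case B} the two entries $a_{25},a_{34}$ (both of weight $d_1=d_2$) generically contain independent linear combinations of $y_1$ and $y_2$; restricting $\Pf_1(M)$ to $\Proj^1_{y_1:y_2}$ then yields a \emph{rank two} binary quadratic (normalised to $y_1^2-y_1y_2$ in the paper's Proposition \ref{quad form on Z2}), not a pure square, and its two distinct roots give the two base points $P_1,P_2\in Z_2$ in case \ref{case B} just as in case \ref{case A}. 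Your argument therefore needs Proposition \ref{quad form on Z2} in full, covering both configurations, rather than a generality claim restricted to \ref{case A}.

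A secondary inaccuracy: your picture of the two points being "interchanged by the monodromy of the pair of roots of $q$" suggests the two analytic flips are symmetric. That is correct in configuration \ref{case A} (Lemma \ref{d1 = d2 case A lemma}: the two flips have the same weights, essentially because the relevant pairs of entries of $M$ have equal weights), but it fails in configuration \ref{case B}, where the roots of $y_1(y_1-y_2)$ are not exchangeable — one is a coordinate point — and the two flips can have different weights or even different numbers of eliminated variables (Remark \ref{d1 = d2 case B lemma}). The rest of your scheme — contracting $\mathbb{A}_2$ to the line $\Proj^1_{y_1:y_2}\subset\mathbb{G}_2$, cutting it with $Z_2$ via the restricted quadratic, and running the localised codimension count and ampleness check of Theorem \ref{it is a flip} at each of the two points — is the paper's approach and is fine once the two issues above are repaired.
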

To fix ideas, $M$ is in Tom$_1$ format throughout this proof. Here the specialisations to \ref{1=2>3>4} and \ref{1=2>3=4} of \ref{case A}, \ref{case B} still apply, but this time there two different variables that fit the entries with weight $d_1 = d_2$ (which replace the weight $\pi$ in \ref{case A}, \ref{case B}). 

Geometrically, $\alpha_2$ contracts the locus $\mathbb{A}_2$ to a line $\Proj^1_{y_1 : y_2} \subset \mathbb{G}_2$. So, the intersection $\mathbb{A}_2 \cap Y_2$ is mapped to $\Proj^1_{y_1 : y_2} \cap Z_2$. In Lemma \ref{d1 = d2 case A lemma} and in Remark \ref{d1 = d2 case B lemma} we discuss the nature of the intersection $\Proj^1_{y_1 : y_2} \cap Z_2$ in cases \ref{case A} and \ref{case B} respectively. The idea is that $\Proj^1_{y_1 : y_2}$ cuts out a rank 2 quadratic form in $y_1, y_2$, which determines two points in $Z_2$. Therefore, the variety $Y_2$ is subjected to two simultaneous flips.

\begin{proposition} \label{quad form on Z2}
	There exists a rank 2 quadratic form in $y_1, y_2$ defined on $\mathbb{G}_2$ that determines two distinct points $P_1, P_2$ in $Z_2$.
\end{proposition}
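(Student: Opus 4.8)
The plan is to analyze the restriction of the pfaffian equations of $Z_1$ to the line $\Proj^1_{y_1:y_2} \subset \mathbb{G}_2$, which is the image under $\alpha_2$ of the exceptional locus $\mathbb{A}_2$. Since $d_1 = d_2$, the entries of $M$ that had weight $\pi = d_1 = d_2$ in configurations \ref{case A} or \ref{case B} are now occupied by generic linear forms in $y_1, y_2$ (with possible $x_i$-corrections that vanish on the line). I would restrict the five pfaffians to the locus where all variables except $y_1, y_2$ are set to their coordinate-point values at the generic point of $\Proj^1_{y_1:y_2}$ — concretely, setting $x_1, x_2, x_3, y_3, y_4$ (and $t, s$, which are eliminated by the unprojection equations as in Theorem \ref{it is a flip}) appropriately. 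The claim is that what survives is, up to the syzygy relation, essentially one nontrivial equation, and that this equation is a quadratic form $q(y_1, y_2)$ of rank $2$, hence a product of two distinct linear factors cutting out two points $P_1, P_2 \in Z_2$.

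The key steps, in order, are: (1) Use Definition \ref{Tom definition} (Tom$_1$) to write out which entries of $M$ are linear in $I_D$; in particular, by Lemma \ref{quasilinearity of entries} and the weight configurations \ref{case A}, \ref{case B} specialised to $d_1 = d_2$, identify (at least) two entries $a_{k,l}$, $a_{k',l'}$ with $k,l,k',l' \neq 1$ in which $y_1$ and $y_2$ appear linearly, occupying positions of equal weight. (2) Restrict the quadratic pfaffian $\Pf_1(M)$ to the line: since $\Pf_1$ is a sum of $2\times 2$ products of entries with $k,l \neq 1$, and two of these entries carry the generic linear combination $\lambda_1 y_1 + \lambda_2 y_2$ and $\mu_1 y_1 + \mu_2 y_2$, the restriction of $\Pf_1$ picks up a term like $(\lambda_1 y_1 + \lambda_2 y_2)(\mu_1 y_1 + \mu_2 y_2) \cdot c$ for a nonzero constant $c$, plus lower-rank contributions; by generality of $Z_1$ in its Tom family this total is a rank $2$ quadratic form $q(y_1, y_2)$. (3) Use the syzygy $p_1\Pf_2 + p_2\Pf_3 + p_3\Pf_4 + p_4\Pf_5 = 0$ (as in the proof of Lemma \ref{rank of A}) to see that the four linear pfaffians contribute no independent equation once restricted, so $Z_2 \cap \Proj^1_{y_1:y_2}$ is cut out by $q(y_1,y_2) = 0$. (4) A rank $2$ binary quadratic form factors as a product of two distinct linear forms over $\C$, giving exactly two distinct points $P_1 \neq P_2$; generality of $Z_1$ guarantees the discriminant is nonzero.

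I expect the main obstacle to be step (2): showing that the surviving quadratic form has rank exactly $2$ rather than rank $1$ (a double point) or being identically zero. Rank $1$ would correspond to the two flip points colliding — this is exactly the degenerate situation that has to be excluded by the genericity hypothesis on $Z_1 \supset D$ in its Tom family. One must check that the coefficients of $y_1^2$, $y_1 y_2$, $y_2^2$ in the restricted $\Pf_1(M)$ are not forced into a perfect-square relation by the Tom constraints on the weights; this requires a careful bookkeeping of which matrix entries can legitimately contain constants versus forms in the $x_i$, using Lemma \ref{existence of mon tom} and the explicit weight tables, and is genuinely where the two subcases \ref{case A} and \ref{case B} are handled separately (as promised, in Lemma \ref{d1 = d2 case A lemma} and Remark \ref{d1 = d2 case B lemma}). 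The complementary checks — that $q$ is not the zero form, and that the syzygy indeed removes the linear pfaffians' contribution — are comparatively routine given the structure already established for the $d_1 > d_2$ case.
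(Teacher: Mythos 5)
Your overall strategy coincides with the paper's: restrict $\Pf_1(M)$ to the line $\Proj^1_{y_1:y_2}\subset\mathbb{G}_2$, observe that it produces a binary quadratic form which (by the weight configurations \ref{case A}, \ref{case B} and generality) has rank 2, and conclude that $Z_2\cap\Proj^1_{y_1:y_2}$ consists of two distinct points. Steps (1), (2) and (4) are in substance what the paper does; the paper simply exhibits the forms as $y_1^2-y_1y_2+y_2^2$ in case \ref{case A} and $y_1^2-y_1y_2$ in case \ref{case B} after placing $y_1$ in $a_{25}$ and $y_2$ in $a_{34}$.

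The genuine gap is your step (3). The syzygy $p_1\Pf_2+p_2\Pf_3+p_3\Pf_4+p_4\Pf_5=0$ cannot do the job you assign to it: it only expresses one of the four linear pfaffians in terms of the other three at points where not all $p_i$ vanish, and on $\Proj^1_{y_1:y_2}$ the restricted $p_i$ typically do vanish, so the relation is vacuous there; in any case it leaves open the possibility that some $\Pf_j$, $j\geq 2$, restricts to a nonzero linear form in $y_1,y_2$, which would cut the locus down to at most one point and destroy the conclusion. What is actually needed, and what the paper proves, is that \emph{every} monomial of the linear pfaffians dies on the line: any appearance of $y_1$ or $y_2$ in a top-row entry of $M$ picks up a $t$ factor after the blow-up and saturation (so it vanishes on $\Proj^1_{y_1:y_2}$, where the equations of $Z_2=\alpha_2(Y_2)$ are written in coordinates of $\mathbb{G}_2$ including $t$), and in the remaining ideal entries $y_1,y_2$ occur only multiplied by variables that vanish on the line. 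Hence only the term of $\Pf_1(M)$ coming from the product of the two weight-$d_1$ ideal entries survives, and the quadratic form is exactly the restriction of $\Pf_1(M)$. Replacing your syzygy argument by this $t$-factor/Tom-format bookkeeping closes the gap; the rest of your outline then goes through as in the paper.
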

\begin{proof}
	Independently on \ref{case A} and \ref{case B}, without loss of generality we assume that $y_1$ occupies the $a_{25}$ entry and that $y_2$ occupies the $a_{34}$ entry of $M$. 
	The equations of $Z_2$ are in terms of $t$ as well, being the image of $Y_2$ via $\alpha_2$. If any of $y_1$ or $y_2$ is in one of the entries in the top row of the matrix, it picks up a $t$ factor in the blow up of $X$, so it vanishes when restricted to $\Proj^1_{y_1 : y_2}$. Moreover, if $y_1$ and $y_2$ appear in other entries of $M$ they need to be multiplied by some other variable. 
	Therefore, the quadratic form is to be found in the first pfaffian of $M$, i.e. it is the restriction of $\Pf_1(M)$ to $\Proj^1_{y_1 : y_2}$. It is of the form $y_1^2 - y_1 y_2 + y_2^2$ in case \ref{case A}, whereas it is $y_1^2 - y_1 y_2$ in case \ref{case B}. No other monomials, also in other equations, survive the restriction to $\Proj^1_{y_1 : y_2}$. For \ref{case A}, \ref{case B} the two quadratic forms describe two distinct points on $Z_2$.
\end{proof}

\begin{lemma} \label{d1 = d2 case A lemma}
	Let $Z_1$ be defined by a graded matrix $M$ in Tom format having weights as in \ref{case A}. Then, the two flips have exactly the same weights. 
\end{lemma}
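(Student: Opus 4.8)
The plan is to reduce the statement to a single local computation of the flip at one of the two points, and then to invoke a symmetry that is special to case \ref{case A}. I would begin by recalling, from Proposition \ref{quad form on Z2}, that in case \ref{case A} the two points $P_1,P_2$ lie on the line $\ell\coloneqq\Proj^1_{y_1:y_2}\subset\mathbb{G}_2$ and are cut out on $\ell$ by the rank $2$ form $q=y_1^2-y_1y_2+y_2^2$. Over $\C$ this form factors as $q=u\,v$, where $u=y_1-\zeta y_2$ and $v=y_1-\bar\zeta y_2$ for the two distinct roots $\zeta,\bar\zeta$, and -- this is the point -- $\deg u=\deg v=d_1=d_2$. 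Thus $(y_1,y_2)\mapsto(u,v)$ is a degree preserving linear change of the two orbinates of weight $d_1$. In case \ref{case A} the four entries $a_{2,4},a_{2,5},a_{3,4},a_{3,5}$ all carry the single weight $\pi=d_1$, so this change carries the graded matrix $M$ to another graded matrix with the same weight pattern, in which $u$ and $v$ still appear linearly among $a_{2,4},a_{2,5},a_{3,4},a_{3,5}$: it preserves the Tom$_1$ format, the ideal $I_D=\langle u,v,y_3,y_4\rangle$, and all the generality hypotheses used so far. Hence we may assume from the outset that $q=u\,v$ and that $P_1=P_u$, $P_2=P_v$ are coordinate points of $\ell$.

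Next I would run the argument of Theorem \ref{it is a flip} at $P_1=P_v$. Localising at $P_v$ exhibits the relevant fibre of $\alpha_2$ inside $\mathbb{A}_2=\{y_3=y_4=0\}$ as a weighted projective space whose weights involve only $r,d_1,a,b,c$; the unprojection equation $s\,v=g_v$ then eliminates $s$, a quasilinear monomial $x_i\,v$ occurring in one pfaffian (available by Lemma \ref{quasilinearity of entries}) eliminates an orbinate, and a further unprojection equation with monomials purely in the $x_j$ and $t$ (available by Lemma \ref{existence of mon tom}), together with the leftover pfaffians, cuts out the curve $Y_2\cap\mathbb{A}_2$ contracted by $\alpha_2$ inside a weighted projective space whose weights are read off from $r,d_1,d_3,d_4,a,b,c$ alone; the same analysis on the other side determines the weights of the locus $\mathbb{B}_2$ extracted by $\beta_2$. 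Since the computation at $P_2=P_u$ is obtained from the one at $P_1=P_v$ simply by interchanging $u$ and $v$, and $\deg u=\deg v=d_1$, every weight that occurs is unchanged; therefore the two flips have exactly the same weights. Equivalently, the involution $u\leftrightarrow v$ lifts to $w\Proj^7$ (its gradings treat $u,v$ symmetrically, since $\deg u=\deg v$) and hence to a biregular involution of $X$, of $Y_1$ (via the description in Lemma \ref{saturation is ideal of Y}) and of the whole $2$-ray game, which swaps $P_1$ and $P_2$ and so conjugates one flip into the other.

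The step I expect to require the most care is the verification that the substitution $(y_1,y_2)\mapsto(u,v)$ genuinely leaves intact all the structural inputs of Theorem \ref{it is a flip} at \emph{both} $P_1$ and $P_2$: that the quasilinear monomials $x_i\,v$ and $x_j\,u$, and the auxiliary unprojection equations of Lemma \ref{existence of mon tom}, are present and of the same weights on both sides. This is exactly where the hypothesis of case \ref{case A} is essential and cannot be weakened: it is precisely because all four entries $a_{2,4},a_{2,5},a_{3,4},a_{3,5}$ share the weight $\pi=d_1=d_2$ that a constant linear mixing of $y_1$ and $y_2$ is an automorphism of $w\Proj^7$ respecting the Tom format. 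In case \ref{case B}, by contrast, only $a_{2,5}$ and $a_{3,4}$ have weight $d_1$ while $a_{2,4},a_{3,5}$ are free, the form $q$ degenerates to $y_1^2-y_1y_2$, the symmetry between the two points is lost, and the two flips need not coincide -- which is the content of Remark \ref{d1 = d2 case B lemma}.
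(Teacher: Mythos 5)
Your overall strategy is the same as the paper's: identify the two base points $P_1,P_2$ of Proposition \ref{quad form on Z2}, run the local analysis of Theorem \ref{it is a flip} at each of them, and conclude that the two flips carry the same weights. The preliminary linear change $(y_1,y_2)\mapsto(u,v)$ is legitimate (it preserves degrees, the ideal $I_D$ and the Tom format, precisely because $d_1=d_2$), and it is a reasonable way to make $P_1,P_2$ into coordinate points. The gap is in the step you yourself flag as delicate, which you never actually carry out. The assertion that ``the computation at $P_u$ is obtained from the one at $P_v$ simply by interchanging $u$ and $v$'' is not available: the entries of $M$, hence the equations of $X$ and of $Y_2$, are not symmetric in $u$ and $v$, so the two local computations are genuinely different computations. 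For the same reason your fallback claim fails: the involution $u\leftrightarrow v$ is an automorphism of $w\Proj^7$, but there is no reason it should preserve the (non-symmetric) ideal of $X$, so it does not induce a biregular involution of $X$ or of the $2$-ray game conjugating one flip into the other. Knowing only that $\deg u=\deg v$ does not tell you that the variables which get eliminated locally analytically at $P_1$ and at $P_2$ have equal weights --- and it is exactly this elimination data that determines the flip weights; indeed this is what breaks down in case \ref{case B} (Remark \ref{d1 = d2 case B lemma}), so some input beyond ``the two points lie on a line of ideal variables of equal degree'' is indispensable.

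What closes the argument, and is the content of the paper's proof, is a weight count on the remaining entries of $M$ forced by configuration \ref{case A}: since $a_{2,4},a_{2,5},a_{3,4},a_{3,5}$ all have weight $\pi$, the top-row entries pair up with equal weights, namely $a_{1,2}$ with $a_{1,3}$ (weight $\sigma$) and $a_{1,4}$ with $a_{1,5}$ (weight $\pi+\sigma-\tau$). In the pfaffians, the entry carrying one of the two weight-$\pi$ ideal variables is multiplied against one member of each pair, and the entry carrying the other against the other member; consequently, whichever variable can be eliminated locally at one of the two points can be eliminated, in the same weight, at the other (or no elimination occurs at either), while $y_3,y_4$ are never eliminated since on the top row they always appear with a factor of $t$. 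This is what forces the contracted and extracted loci over $P_1$ and $P_2$ to have identical weights. Your closing observation --- that in case \ref{case A} the linear mixing of $y_1,y_2$ respects the grading and the Tom format --- is true but does not substitute for this comparison: it only justifies the coordinate change, not the symmetry of the two eliminations.
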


\begin{proof}[Proof of Lemma \ref{d1 = d2 case A lemma}]
	Let $M$ have weights as in \ref{case A}. Place $y_1$ and $y_2$ in the entries $a_{25}$ and $a_{34}$ respectively. Locally at $P_{y_1}$ we can eliminate a potential linear term in the entries $a_{12}$ and $a_{15}$; likewise, locally at $P_{y_2}$ for a linear term in $a_{13}$ and $a_{14}$. Since $a_{12}$ and $a_{13}$ have the same weights, $y_1$ and $y_2$ eliminate the same variable when localising at their respective coordinate points; or otherwise they do not eliminate any variable in those entries at all (same for $a_{14}$ and $a_{15}$). The variables $y_3$ and $y_4$ cannot be eliminated, as they are always multiplied by a $t$ factor on the top row. Therefore, the birational transformations at $P_1$ and $P_2$ can only be flips. This proves that $\alpha_2$ contracts two loci of the same dimension: in fact, those loci are isomorphic. In conclusion, the flip phenomenon is symmetrical over $P_1, P_2 \in Z_2$.
\end{proof}

\begin{remark} \label{d1 = d2 case B lemma}
	The Lemma above does not hold if $M$ is as in \ref{case B}. In fact, if one of the flips is toroidal/hypersurface, the other one is not necessarily toroidal/hypersurface. This is because the weights in the top row of $M$ are all different, so $y_1$ and $y_2$ cannot eliminate the same variables, and therefore the flips at $P_1$ and $P_2$ cannot have the same weights. Moreover, suppose that a certain linear variable $w$ occupies the entry $a_{14}$: it can appear in the $a_{15}$ entry only if multiplied by a polynomial $f_{d_1-\upsilon}$ of degree $d_1-\upsilon$. Thus, there is no hope for $y_2$ to eliminate $w$, and therefore the two flips can have different numbers of weights.
\end{remark}

\begin{proof}[Proof of Theorem \ref{theorem d1 = d2}]
	Similarly to Theorem \ref{it is a flip}, $\Psi_2$ is an algebraically irreducible flip. Its restriction to $Y_2$ is constituted of two distinct components, each contracted to one of the points $P_1,P_2 \in Z_2$ (by Proposition \ref{quad form on Z2}). Lemma \ref{d1 = d2 case A lemma} and Remark \ref{d1 = d2 case B lemma} clarify the nature of such components.
\end{proof}

Theorem \ref{theorem d1 = d2} holds if $d_1 = d_2 > d_3 = d_4$ and $d_1 = d_2 > d_3 > d_4$, although the continuation of the link is different in the two cases. For the latter, the statements made for item \ref{1>2>3>4} still hold. For the former, we have that

\begin{theorem} \label{thm dP fibr}
	If $d_2>d_3=d_4$, then $\Phi'$ is a del Pezzo fibration over $\Proj^1_{y_3,y_4}$.
\end{theorem}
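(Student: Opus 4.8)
The plan is to run the last wall-crossing of the two-ray game prepared in Sections~\ref{Construction}--\ref{unprojection setup}, and then to track the anticanonical class through it. After the flop of Theorem~\ref{first step is a flop} and the flip(s) of Theorems~\ref{it is a flip} and~\ref{theorem d1 = d2}, the link has reached a three-fold $Y_4\subset\F_4$ inside the terminal rank $2$ toric variety $\F_4$, whose Cox ring and grading are still those of~\eqref{scroll for tom} but whose irrelevant ideal has become $(t,s,x_1,x_2,x_3,y_1,y_2)\cap(y_3,y_4)$. Since $d_3=d_4$, the coordinates $y_3$ and $y_4$ span one and the same ray $\rho_{y_3}=\rho_{y_4}$, of bidegree $\big(\substack{d_3\\-1}\big)$, and by the ray--chamber picture recalled in Section~\ref{Construction} this is the last ray of the game; the final map $\Phi'\colon\F_4\to\mathbb{G}_4$ is the contraction it defines, i.e. the morphism given by $\big|\,\mathcal{O}\big(\substack{d_3\\-1}\big)\,\big|$.

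First I would determine $\mathbb{G}_4$. The key point is that, because $d_1,d_2>d_3=d_4$, for every $n\ge 0$ the only monomials in $t,s,x_1,x_2,x_3,y_1,\dots,y_4$ of bidegree $\big(\substack{nd_3\\-n}\big)$ are the degree $n$ monomials in $y_3,y_4$: a power of $y_1$ or $y_2$ would raise the top degree above $nd_3$ while contributing the same to the second component as $y_3$ or $y_4$, and then, since $r,a,b,c>0$, no power of $t,s,x_1,x_2,x_3$ can occur either. Hence $\bigoplus_{n\ge 0}H^0\big(\F_4,\mathcal{O}\big(\substack{nd_3\\-n}\big)\big)=\C[y_3,y_4]$, so $\mathbb{G}_4=\Proj^1_{y_3:y_4}$ and $\Phi'\colon\F_4\to\Proj^1_{y_3:y_4}$ is a toric Mori fibration with $\rho\big(\F_4/\Proj^1\big)=1$. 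In particular $\Phi'$ is a fibration, not a divisorial contraction; this is exactly where the hypothesis $d_3=d_4$ enters, in contrast with the cases~\ref{1>2>3>4} and~\ref{1=2>3>4}.

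Next I would restrict $\Phi'$ to $Y_4$. As $\{y_3=y_4=0\}$ lies in the unstable locus of $\F_4$ it is empty, so a fortiori $\{y_3=y_4=0\}\cap Y_4=\emptyset$, and $\Phi'|_{Y_4}\colon Y_4\to\Proj^1_{y_3:y_4}$, $q\mapsto[y_3(q):y_4(q)]$, is a genuine morphism. It is surjective: if $Y_4$ were contained in a fibre then $y_3-\lambda y_4$ would lie in the ideal of $Y_4$ for some $\lambda$, which is impossible for a quasi-smooth $X$ of codimension $4$. Its fibres $S_\lambda:=Y_4\cap\{y_3=\lambda y_4\}$ (together with $Y_4\cap\{y_4=0\}$) are $\Q$-Cartier divisors in $Y_4$, hence purely two-dimensional, so $\Phi'|_{Y_4}$ is a fibration by surfaces. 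To finish, I would identify these surfaces. As recorded in the proof of Theorem~\ref{first step is a flop}, $-K_{Y_4}=\{x_1=0\}$, which has class $\mathcal{O}_{\F_4}\big(\substack{1\\0}\big)$ because $a=1$; the curve class contracted by $\Phi'$ is the extremal ray $v$ of $\overline{NE}(\F_4)$ with $v\cdot\mathcal{O}\big(\substack{d_3\\-1}\big)=0$, which one reads off from~\eqref{scroll for tom} to be $v=(1,d_3)$, and $v\cdot\mathcal{O}\big(\substack{1\\0}\big)=1>0$, so $-K_{Y_4}$ is $\Phi'$-ample. Two distinct fibres of $\Phi'|_{Y_4}$ are disjoint, so $S_\lambda|_{S_\lambda}\equiv 0$; adjunction then gives $-K_{S_\lambda}=(-K_{Y_4})|_{S_\lambda}$, which is ample, so every fibre is a del Pezzo surface, and the general one is smooth by Bertini, since the pencil spanned by $y_3,y_4$ is base-point-free on $Y_4$ and the finitely many singular points of $Y_4$ lie over only finitely many points of $\Proj^1$. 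Finally $\Cl(Y_4)=\Cl(\F_4)=\Z^2$, the two-ray game for $X$ being embedded in that for $w\Proj^7$, so $\rho\big(Y_4/\Proj^1_{y_3:y_4}\big)=1$, and $\Phi'|_{Y_4}\colon Y_4\to\Proj^1_{y_3:y_4}$ is a del Pezzo fibration in the sense of Definition~\ref{def Sarkisov link}.

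The step I expect to be the main obstacle is the $\Phi'$-relative ampleness of $-K_{Y_4}$, equivalently the genuine del Pezzo property of the fibres. It rests on two facts that must be pinned down carefully from the grading~\eqref{scroll for tom}: that the identification $-K_{Y_4}=\{x_1=0\}$ survives every flop and flip of the link, and the exact location inside $\overline{NE}(\F_4)$ of the curve class that $\Phi'$ contracts. Once those are in hand, the shape of $\mathbb{G}_4$ and the surjectivity and fibre dimension of $\Phi'|_{Y_4}$ follow directly, as does the conclusion.
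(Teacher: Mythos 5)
Your argument is correct and follows essentially the same route as the paper: read off from the bigrading that the last ray gives a fibration of the ambient rank~2 toric variety over $\Proj^1_{y_3,y_4}$, check that the fibres of the 3-fold are two-dimensional surfaces, and conclude they are del Pezzo because $-K$ (the class of $\{x_1=0\}$) restricts amply to them via adjunction, exactly as the paper does (it verifies this adjunction computation explicitly in the example of Section~\ref{20652 T1}). Your Bertini argument for smoothness of the general fibre is extra detail that the paper isolates in Lemma~\ref{general fibre of dP is smooth} with a slightly different (terminality) argument, but this does not change the substance.
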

\begin{proof}
	Consider 
	$\Phi' \colon \F_3 \rightarrow \Proj^1_{y_3,y_4}$. The grading of $\F_3$ can be written as
	\begin{equation*}
	\left(
	\begin{array}{c c c c c c c | c c}
	t & s & x_1 & x_2 & x_3 & y_1 & y_2 & y_3 & y_4 \\
	d_3 & r+d_3 & a & b & c & d_2-d_3 & d_2-d_3 & 0 & 0 \\
	-1 & -1 & 0 & 0 & 0 & 1 & 1 & 1 & 1
	\end{array}
	\right) \; .
	\end{equation*}
	This is a weighted $\Proj^6$-bundle over $\Proj^1$. The intersection of $Y_3$ with its general fibre has dimension 2, for $y_3$ and $y_4$ now act as parameters. The restriction of $K_{Y_3}$ to such intersection is still ample. Hence, $\Phi'$ is a del Pezzo fibration of $Y_3$ over $\Proj^1_{y_3,y_4}$.
\end{proof}
\begin{lemma} \label{general fibre of dP is smooth}
	The intersection of $Y_3$ with the general fibre of the bundle defined by $\Phi'$ is smooth.
\end{lemma}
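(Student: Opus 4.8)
The plan is to deduce this from generic smoothness in characteristic $0$, using the fact that $Y_3$, being a $\Q$-factorial terminal three-fold in the Mori category, has only isolated singularities, so that $\Sing(Y_3)$ sits over only finitely many points of the base $\Proj^1_{y_3,y_4}$. The general fibre will then be simultaneously disjoint from $\Sing(Y_3)$ and a smooth fibre of the smooth-locus restriction of $\Phi'$, hence a smooth surface.

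Concretely I would proceed as follows. First, record that $\Phi'|_{Y_3}\colon Y_3\to\Proj^1_{y_3,y_4}$ is a surjective \emph{morphism} with no base locus: $\Phi'\colon\F_3\to\Proj^1_{y_3,y_4}$ is the final toric contraction of the $2$-ray game and hence a genuine morphism, $Y_3\subset\F_3$, and surjectivity together with the two-dimensionality of the general fibre is already contained in Theorem \ref{thm dP fibr}. Second, since $Y_3$ is a terminal three-fold, $\Sing(Y_3)$ is a finite set of points, so $B:=\Phi'(\Sing(Y_3))\subset\Proj^1_{y_3,y_4}$ is finite; on the complement, $\Phi'|_{Y_3\setminus\Sing(Y_3)}$ is a dominant morphism of smooth varieties over $\C$, so by generic smoothness there is a dense open $U\subseteq\Proj^1_{y_3,y_4}$ over which it is a smooth morphism. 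Third, for any point of the nonempty open set $U\setminus B$, the corresponding fibre $F=\Phi'^{-1}(\mathrm{pt})\cap Y_3$ is disjoint from $\Sing(Y_3)$, hence coincides with the scheme-theoretic fibre of $\Phi'|_{Y_3\setminus\Sing(Y_3)}$ over a point of $U$, which is smooth. This gives that the general fibre of the $w\Proj^6$-bundle, intersected with $Y_3$, is a smooth surface.

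The only point demanding care is the claim that $\Sing(Y_3)$ is finite: this rests on $Y_3$ being terminal, which follows from $Y_1$ being terminal (it is the Kawamata blow-up of the terminal Fano $X$, Theorem \ref{kawamata thm}) together with the fact that the flop $\Psi_1$ and the simultaneous flips $\Psi_2$ of Theorems \ref{first step is a flop} and \ref{theorem d1 = d2} preserve terminality, so $Y_3$ remains in the Mori category. An alternative, more hands-on route would be to write the general fibre explicitly as the subvariety of $w\Proj^6(d_3,r+d_3,a,b,c,d_2-d_3,d_2-d_3)$ cut out by $h_1,\dots,h_9$ with $y_3,y_4$ specialised to general constants, and to run a Bertini argument for the base-point-free pencil of such fibres on $Y_3$; but this still requires the finiteness of $\Sing(Y_3)$ to control the singularities of the total space, so the argument via generic smoothness is the cleanest and I would present it as the proof.
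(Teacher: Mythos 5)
Your argument is essentially the paper's own proof, written out more carefully: the paper argues by contradiction that a singular general fibre would force a whole curve of singular points inside $Y_3$, which is impossible because $Y_3$ is terminal and hence has only isolated singularities, while you phrase the same two ingredients (characteristic-zero generic smoothness plus finiteness of $\Sing(Y_3)$ coming from terminality, preserved through the flops and flips) in contrapositive form. Your version makes explicit the generic-smoothness step that the paper leaves implicit, but the underlying idea is the same, so the proposal is correct.
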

\begin{proof}
	The generic fibre $S$ of $\Phi'$ is a surface in $Y_3$. Suppose $S$ is singular. In particular, its closure inside the 3-fold $Y_3$ is a line. Therefore, $Y_3$ would contain a whole singular line, which is a contradiction with $Y_3$ being terminal.
\end{proof}

In \cite{BigTableLinks} we compute the degree of the general fibre of these del Pezzo fibrations.

\subsection{Proof of \ref{1>2>3=4}}

Similarly to the cases above, the weights of $M$ influence the behaviour of the link, and the distinction of \ref{case A}, \ref{case B} plays a crucial role. The majority of Hilbert series that fall into case \ref{1>2>3=4} are such that the weights of $M$ are in configuration \ref{case B}.

\begin{proposition}
	Suppose $M$ has weights in configuration \ref{case B}. Then, either $y_1$ appears as a square in the equations of $Y_2$, or $y_2$ appears as a square in the equations of $Y_3$.
\end{proposition}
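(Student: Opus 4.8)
The plan is to adapt the mechanism behind Proposition \ref{skipped flip in >>>} and Theorem \ref{d2 = d3 special case} to the present situation. In configuration \ref{case B} the first pfaffian $\Pf_1(M)$ is quadratic in the ideal variables and, by construction, contains the product of the two entries $a_{2,5}, a_{3,4}$, both of which carry the same weight, namely the common value $\pi$. Since $d_1 > d_2 > d_3 = d_4$, the weight $\pi$ can only be one of $d_1$ or $d_2$: I would first argue, as in the proof of Proposition \ref{skipped flip in >>>}, that $\pi \notin \{d_3, d_4\}$, because otherwise the forced weights of the remaining entries of $M$ (displayed in \ref{case B}) would fail to be positive or would make the pfaffians inhomogeneous. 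This dichotomy $\pi = d_1$ or $\pi = d_2$ is exactly what produces the two alternatives in the statement.

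Next, suppose $\pi = d_1$. Then by generality of $Z_1$ (Lemma \ref{quasilinearity of entries}) the variable $y_1$ occupies both entries $a_{2,5}$ and $a_{3,4}$ linearly, so $\Pf_1(M)$ contains the monomial $y_1^2$ (up to lower-order corrections that also contribute $y_1^2$ after the substitutions defining $h_1$). By Lemma \ref{saturation is ideal of Y}, the equation $h_1 = \alpha_1^*\Pf_1(M)/t^2$ is a defining equation of $Y_1$, hence of $Y_2 = \Psi_{1*}(Y_1)$ under the identification of Cox rings; tracking the bidegrees through the row operation localising at $P_{y_1}$ (as in the grading displayed in the proof of Theorem \ref{it is a flip}) shows the monomial $y_1^2$ survives in the equations of $Y_2$. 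Symmetrically, if $\pi = d_2$ then $y_2$ occupies $a_{2,5}$ and $a_{3,4}$, so $y_2^2$ appears in $\Pf_1(M)$, and the same argument — now localising at $P_{y_2}$, which is the base of the flip $\psi_3 \colon Y_3 \dashrightarrow Y_4$ — gives that $y_2^2$ appears in the equations of $Y_3$.

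The main obstacle I anticipate is bookkeeping the weight of $\Pf_1(M)$ against $d_1$ and $d_2$ precisely enough to rule out the degenerate possibilities and to be sure that the square monomial is genuinely present (and not cancelled) after passing to $h_1$ and after the push-forwards $\Psi_{i*}$. This is the same kind of check carried out in Proposition \ref{skipped flip in >>>} and Remark \ref{d1 = d2 case B lemma}: it is essential here that in case \ref{1>2>3=4} there is a \emph{unique} ideal generator of weight $d_1$ and a unique one of weight $d_2$, so no accidental cancellation among distinct ideal variables of the same weight can occur. Once this is in place, the conclusion is immediate, and (exactly as in Theorem \ref{d2 = d3 special case}) it forces the corresponding toric flip $\Psi_2$ or $\Psi_3$ to restrict to an isomorphism on $Y_2$ or $Y_3$ respectively, which is how this proposition feeds into the description of the link in \ref{1>2>3=4}.
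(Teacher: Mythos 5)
Your proposal is correct and follows essentially the same route as the paper: in configuration \ref{case B} the product $a_{2,5}a_{3,4}$ forces a quadratic term of weight $2\pi$ in $\Pf_1(M)$ with $\pi\in\{d_1,d_2\}$, and since in case \ref{1>2>3=4} there is a unique ideal variable of weight $d_1$ (resp.\ $d_2$), that term is exactly $y_1^2$ (resp.\ $y_2^2$), surviving in the equations of $Y_2$ (resp.\ $Y_3$) exactly as in Proposition \ref{skipped flip in >>>}. Your extra bookkeeping via Lemma \ref{saturation is ideal of Y} just makes explicit what the paper leaves implicit, so no further changes are needed.
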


\begin{proof}
	If the weights of $M$ are in \ref{case B}, $\Pf_1(M)$ involves the multiplication of the entries $a_{25}, a_{34}$ of same weight (either $d_1$ or $d_2$, depending on the specific Hilbert series considered). In contrast to the proof of Proposition \ref{quad form on Z2}, by hypothesis here only one variable has weight $d_1, d_2$, i.e. $y_1$ and $y_2$ respectively. Therefore, the quadratic form defined on $\mathbb{G}_2$ (or $\mathbb{G}_3$ respectively) is $y_1^2$ (or $y_2^2$ in turn).
\end{proof}

\begin{lemma}
	If $M$ has weights in configuration \ref{case B}, then either $\Psi_2$ or $\Psi_3$ is an isomorphism when restricted to $Y_2$ and $Y_3$ respectively.
\end{lemma}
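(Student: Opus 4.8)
The plan is to leverage the preceding proposition directly: once we know that, in configuration \ref{case B}, a pure square $y_1^2$ (resp.\ $y_2^2$) appears among the equations of $Y_2$ (resp.\ $Y_3$), we are in exactly the situation handled by Theorem \ref{skipped flip thm}. First I would fix, without loss of generality, that $M$ is in Tom$_1$ format and that the repeated weight $\pi$ in configuration \ref{case B} equals $d_1$, so that $y_1$ occupies both of the entries $a_{2,5}$ and $a_{3,4}$; the case $\pi = d_2$ is symmetric, and $\pi$ is never $d_3$ or $d_4$ since those are the strictly smaller ideal weights. By the preceding proposition, $\Pf_1(M)$ then contributes the monomial $y_1^2$ to $I_{Y_1}$, hence to $I_{Y_2}$ after the flop $\Psi_1$ (which, by Theorem \ref{first step is a flop}, leaves $Y_2$ isomorphic in codimension~1 to $Y_1$ and does not touch the chart we care about).

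The key step is then to run the argument of Theorem \ref{skipped flip thm} with the point $P_{y_1}$: the presence of the monomial $y_1^2$ in the ideal of $Y_2$ means that $Y_2$ evaluated at $P_{y_1}$ is non-empty only if that monomial vanishes there, which it does not — $y_1$ is precisely the nonzero coordinate at $P_{y_1}$. Hence $P_{y_1} \notin Z_2$. Since $\alpha_2$ contracts the toric locus $\mathbb{A}_2$ to $P_{y_1}$ while $\alpha_2(Y_2) = Z_2$, the exceptional locus of the toric flip $\Psi_2$ does not meet $Y_2$, i.e.\ $\mathbb{A}_2 \cap Y_2 = \emptyset$, so $\Psi_2$ restricts to an isomorphism $Y_2 \cong Y_3$. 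In the complementary subcase $\pi = d_2$, the same reasoning applied one wall later (at $P_{y_2} \in Z_3$, using that $y_2^2$ lies in the equations of $Y_3$) shows that $\Psi_3$ restricts to an isomorphism $Y_3 \cong Y_4$. This yields the dichotomy in the statement.

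I would close by noting the one genuinely load-bearing point: that the repeated-weight entries are occupied \emph{linearly} by the single ideal variable of that weight, so that their product in $\Pf_1(M)$ really is the monomial $y_1^2$ (resp.\ $y_2^2$) and not merely a quadratic form of lower rank or a term that picks up an extra $t$ factor under the Kawamata blow-up. This is exactly where configuration \ref{case B} differs from configuration \ref{case A}: in \ref{case A} two \emph{distinct} variables of equal weight fill $a_{2,5}$ and $a_{3,4}$, producing a rank~2 form and hence genuine simultaneous flips (Theorem \ref{theorem d1 = d2}), whereas here the uniqueness of the weight-$d_1$ (resp.\ weight-$d_2$) ideal variable forces the degenerate rank~1 form $y_1^2$ (resp.\ $y_2^2$). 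Lemma \ref{saturation is ideal of Y} guarantees that this monomial survives into $I_{Y_1}$ after saturation. The main obstacle is therefore purely bookkeeping: checking across the relevant Hilbert series that no top-row placement of $y_1$ or $y_2$ in $M$ spoils the argument by attaching a $t$ factor that would remove the pure square — but this is precisely the content already established in the proof of Proposition \ref{quad form on Z2} and in Lemma \ref{quasilinearity of entries}, so it can be invoked rather than redone.
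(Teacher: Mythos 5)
Your proposal is correct and follows essentially the same route as the paper: it deduces the pure square $y_1^2$ (resp.\ $y_2^2$) from the uniqueness of the ideal variable of the repeated weight in configuration \ref{case B}, concludes $P_{y_1}\notin Z_2$ (resp.\ $P_{y_2}\notin Z_3$), and then invokes the mechanism of Theorem \ref{skipped flip thm} exactly as the paper does via Proposition \ref{skipped flip in >>>}. You merely spell out the citation chain (saturation, flop leaving the relevant chart untouched, the contrast with configuration \ref{case A}) that the paper leaves implicit.
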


\begin{proof}
	Either $y_1^2$ appears in the equations of $Y_2$, or $y_2^2$ appears in the equations of $Y_3$.
	We conclude the proof using Proposition \ref{skipped flip in >>>}.
\end{proof}

\begin{remark}
	In case \ref{1>2>3=4}, only the Hilbert series \#20544 has a weight configuration of type \ref{case A}. Since the only variable with weight $d_2$ is $y_2$, it is possible to cancel out $y_2$ from the entries $a_{25}$ and $a_{34}$ via row/column operations. Therefore the equations of $X$ have the monomial $y_2^2$. Nonetheless, no flip is missed because, performing the blow-up of $X$ and then saturating over $t$, the term $y_2^2$ picks up a $t$ factor.
	
	The weights of $M$ relative to the Hilbert series \#5516, \#5867, \#11437 are neither in configuration \ref{case A} nor \ref{case B}. Thus, $\Psi_2, \Psi_3$ are flips for $Y_2, Y_3$ respectively.
\end{remark}

The last map $\Phi'$ of the link in \ref{1>2>3=4} is a del Pezzo fibration, as in Theorem \ref{thm dP fibr}.

\subsection{Proof of \ref{1>2=3=4}}

There are six Hilbert series falling in case $d_1>d_2=d_3=d_4$. 

\begin{proposition}
	The birational link starting from the Hilbert series \#6865 is such that the restriction to $Y_2$ of the birational map $\Psi_2$ is an isomorphism.
\end{proposition}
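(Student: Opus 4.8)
The plan is to show that the Hilbert series \#6865 falls into configuration \ref{case B}, and then to invoke the mechanism already established in Proposition \ref{skipped flip in >>>} and Theorem \ref{skipped flip thm}: if a pure monomial $y_1^2$ (or $y_2^2$) survives in the equations of $Y_2$, then the toric flip $\Psi_2$ cannot restrict to a nontrivial birational map on $Y_2$, because the coordinate point $P_{y_1}\in\mathbb{F}_2$ (the one blown down by $\alpha_2$) does not lie on $Z_2$.

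First I would examine the syzygy matrix $M$ for \#6865 recorded in \cite{TJBigTable}. Since $d_1 > d_2 = d_3 = d_4$, the three ideal variables $y_2,y_3,y_4$ all have the same weight, and I expect that two of the ideal entries of $M$ not in the distinguished Tom row — say $a_{2,5}$ and $a_{3,4}$ — share the same weight, which forces the weight table of $M$ into the shape \ref{case B} with $\pi$ equal to $d_2$ (equivalently $d_3$ or $d_4$). The key arithmetic check is that for \#6865 this common weight $\pi$ genuinely equals one of the ideal weights, so that a single ideal variable $y_j$ occupies both entries $a_{2,5}$ and $a_{3,4}$ simultaneously and appears linearly in each. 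Then, exactly as in the first paragraph of the proof of Proposition \ref{skipped flip in >>>}, the product of these two entries inside $\Pf_1(M)$ produces the monomial $y_j^2$; and by Lemma \ref{saturation is ideal of Y} this monomial is not killed by the saturation over $t$, so $y_j^2$ lies in the ideal of $Y_1$, hence of $Y_2$.

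Next I would identify which wall-crossing this obstructs. The presence of $y_j^2$ in the equations of $Y_2$ means that the coordinate point $P_{y_j}$ is not contained in $Z_2$ (the image of $Y_2$ under $\alpha_2$): evaluating the equation containing $y_j^2$ at $P_{y_j}$ gives a nonzero value. Applying Theorem \ref{skipped flip thm} verbatim, the exceptional locus $\mathbb{A}_2$ of $\alpha_2$, which maps to $P_{y_j}$, is disjoint from $Y_2$, so $\Psi_2|_{Y_2}\colon Y_2\dashrightarrow Y_3$ is an isomorphism. This is precisely the assertion that in \#6865 the toric flip $\Psi_2$ is missed.

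I expect the only real obstacle to be the bookkeeping: confirming from the data in \cite{TJBigTable} that \#6865 is indeed in configuration \ref{case B} rather than \ref{case A} or in neither, and that the repeated ideal entries carry weight exactly equal to one of the $d_j$ rather than a weight $\pi$ that happens to be a sum $d_j+a$ or similar (in which case the corresponding $\alpha_{k,l}^j$ would not be constant and no pure square would appear). Once the weight table is pinned down, everything else is an immediate citation of the already-proved statements, so the proof should be short: quote the weights of $M$ for \#6865, observe configuration \ref{case B} with $\pi = d_2$, deduce $y_2^2 \in I_{Y_1}$ via Lemma \ref{saturation is ideal of Y}, and conclude by Theorem \ref{skipped flip thm} that $\Psi_2$ restricts to an isomorphism on $Y_2$.
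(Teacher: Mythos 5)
Your overall strategy is exactly the paper's (the paper's proof is a one-liner: the weights of $M$ for \#6865 are in configuration \ref{case B}, hence a pure square of an ideal variable appears in the equations of $Y_2$, and one concludes via the mechanism of Theorem \ref{skipped flip thm}). However, your concrete identification of the repeated weight is wrong in a way that matters. For \#6865 we are in case \ref{1>2=3=4}, so the only flip wall is the ray $\rho_{y_1}$: the ray spanned by $y_2,y_3,y_4$ (all of equal bidegree) is the final wall giving the conic bundle $\Phi'$, not $\Psi_2$. Consequently the only monomial that can force $\mathbb{A}_2\cap Y_2=\emptyset$ in Theorem \ref{skipped flip thm} is $y_1^2$, i.e.\ the configuration \ref{case B} weight must be $\pi=d_1$, with $y_1$ (the \emph{unique} ideal variable of weight $d_1$) occupying both $a_{2,5}$ and $a_{3,4}$. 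Your proposal instead sets $\pi=d_2$ and aims to produce $y_2^2$; this fails twice over: first, since $d_2=d_3=d_4$ there are three ideal variables of that weight, so by generality the entries of weight $\pi$ contain general linear combinations of $y_2,y_3,y_4$ and no pure square of a single variable appears (the paper stresses after Proposition \ref{skipped flip in >>>} that the argument requires a single ideal generator of the repeated weight, and that $\pi$ is never $d_3$ or $d_4$); second, even if $y_2^2$ did appear, it would say something about the point $P_{y_2}$, which is not the base of $\Psi_2$, so the desired conclusion about $\Psi_2|_{Y_2}$ would not follow.

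With that correction the argument is the paper's: read off from \cite{TJBigTable} that for \#6865 the entries $a_{2,5},a_{3,4}$ have weight $d_1$, so $y_1$ sits linearly in both and $\Pf_1(M)$ contains $y_1^2$; by Lemma \ref{saturation is ideal of Y} this monomial survives saturation, so $y_1^2$ lies in the equations of $Y_1$ (hence $Y_2$), $P_{y_1}\notin Z_2$, and Theorem \ref{skipped flip thm} gives that $\Psi_2$ restricts to an isomorphism $Y_2\cong Y_3$. Your opening paragraph in fact states the mechanism correctly for $P_{y_1}$; the error only enters when you pin down $\pi$, so the fix is purely the bookkeeping you yourself flagged as the remaining check.
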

\begin{proof}
	The weights of $M$ are as in \ref{case B}, so $y_1^2$ appears in the equations of $Y_2$.
\end{proof}

The other five Hilbert series falling in this case behave as expected.
\begin{proposition}
	Consider the birational link starting from $X$ as in one of the five Hilbert series left in case \ref{1>2=3=4}. Then, the restriction to the variety $Y_2$ of $\Psi_2$ is a flip for $Y_2$. 
\end{proposition}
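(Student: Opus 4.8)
The plan is to reuse the structure of the proofs for the ``genuine flip'' cases (Theorem \ref{it is a flip} and Proposition \ref{quad form on Z2}), showing that for the five remaining Hilbert series in case \ref{1>2=3=4} nothing degenerates: the locus contracted by $\beta_2$ on $Y_2$ is a genuine curve (or positive-dimensional locus of the correct codimension), so $\Psi_2$ restricts to a genuine flip on $Y_2$. First I would localise at the point in $Z_2$ over which $\alpha_2$ contracts, perform the appropriate row operation on the grading of $\F_2$, and identify the exceptional locus $\mathbb{A}_2$ explicitly as a weighted projective space, exactly as in the proof of Theorem \ref{it is a flip}. Since $d_2=d_3=d_4$, three ideal variables $y_2,y_3,y_4$ now have equal weight, and $\alpha_2$ contracts $\mathbb{A}_2$ to a $\Proj^2_{y_2:y_3:y_4}\subset \mathbb{G}_2$; the key point will be to determine $\Proj^2_{y_2:y_3:y_4}\cap Z_2$.

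The heart of the argument is to check that the weight configuration of $M$ for each of these five Hilbert series is \emph{not} of type \ref{case B} — this is where the Proposition just proved isolates \#6865 as the only exception — and then to run the restriction-to-$\Proj^2$ computation. As in Proposition \ref{quad form on Z2}, any $y_j$ sitting in the top row of $M$ picks up a $t$ factor under the Kawamata blow-up (by Proposition \ref{shape of F tom}) and hence dies on restriction to $\{t=s=0\}$, and any $y_j$ appearing in a non-top entry is multiplied by an $x_i$ and so also dies; so the surviving equations on $\Proj^2_{y_2:y_3:y_4}$ come from $\Pf_1(M)$ together with whatever linear-in-$x_i$ monomials Lemma \ref{quasilinearity of entries} and Lemma \ref{existence of mon tom} supply. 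The codimension count proceeds exactly as in Theorem \ref{it is a flip}: the unprojection equation $sy_j=g_j$ eliminates $s$; Lemma \ref{quasilinearity of entries} gives a monomial $x_i y_j$ in a pfaffian, eliminating an orbinate; Lemma \ref{existence of mon tom} gives another unprojection equation with a pure $x$-term; and one shows the residual equations cut out a positive-dimensional locus (not empty, since the base point lies on $Z_2$; not a single point, by the $\Q$-factoriality/quasi-smoothness argument forcing $\beta_2$ to contract a curve as well). The sign check on $-K_{Y_2}\cdot \mathbb{A}_2$ and $-K_{Y_3}\cdot \mathbb{B}_2$ is identical to the one in Theorem \ref{it is a flip}, using that $\{x_1=0\}$ is $\alpha_2$-ample.

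The main obstacle I expect is the case-by-case verification for the five Hilbert series \#(those not \#6865), namely confirming that the quadratic form on $\Proj^2_{y_2:y_3:y_4}$ cut out from $\Pf_1(M)$ has the right rank so that the contracted locus does not collapse to a point or become non-reduced; equivalently, ruling out any hidden square $y_j^2$ in the equations of $Y_2$ of the kind exploited in Proposition \ref{skipped flip in >>>}. Since the weights are not in \ref{case B} for these five, $\Pf_1(M)$ cannot produce such a square from equal-weight ideal entries, and — as in the Remark on \#20544 — even if an apparent $y_j^2$ arose from a row/column normalisation it would acquire a $t$ factor under saturation and disappear from the equations of $Y_2$. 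Once that is pinned down, Theorem \ref{it is a flip}'s argument applies verbatim and gives that the restriction of $\Psi_2$ to $Y_2$ is a flip. In fact one can shortcut most of this: the proof is essentially the proof of Theorem \ref{it is a flip} together with the observation that the hypotheses of Theorem \ref{skipped flip thm} and Proposition \ref{skipped flip in >>>} fail for precisely these five series, so I would write the proof as a reduction to those two results.

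\begin{proof}
	For these five Hilbert series the weights of $M$ are not in configuration \ref{case B}, so $\Pf_1(M)$ does not acquire a pure square $y_1^2$ or $y_2^2$ and, a fortiori, no $y_j^2$ survives in the equations of $Y_2$ after saturation over $t$ (cf.\ the Remark on \#20544). Hence the hypothesis of Proposition \ref{skipped flip in >>>} fails, and moreover the base point of the toric flip $\Psi_2$ lies on $Z_2$, so Theorem \ref{skipped flip thm} does not apply either. We are therefore in the situation of Theorem \ref{it is a flip}: localising at that point of $Z_2$, after the row operation on the grading of $\F_2$, the exceptional locus $\mathbb{A}_2$ of $\alpha_2$ is a weighted projective space contracting to $\Proj^2_{y_2:y_3:y_4}\subset \mathbb{G}_2$. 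The unprojection equation $sy_j=g_j$ globally eliminates $s$; Lemma \ref{quasilinearity of entries} yields a monomial $x_iy_j$ in a pfaffian, eliminating an orbinate locally at the base point; Lemma \ref{existence of mon tom} provides a further unprojection equation containing a pure monomial in the $x_i$. As in the proof of Theorem \ref{it is a flip}, $Y_2\cap \mathbb{A}_2$ thus has codimension at least $3$ inside $\mathbb{A}_2$, it is non-empty because the base point lies on $Z_2$, and it is not a single point because $Y_2$ is quasi-smooth and $\Q$-factorial, forcing $\beta_2$ to contract a curve as well. Finally $\{x_1=0\}\in\left|\mathcal{O}(-aK_{Y_2})\right|$ is $\alpha_2$-ample, so $-K_{Y_2}$ meets the contracted curves positively while $-K_{Y_3}$ meets the $\beta_2$-exceptional curves negatively. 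Therefore $\Psi_2$ restricted to $Y_2$ is a flip.
\end{proof}
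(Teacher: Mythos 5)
Your core reduction is exactly the paper's own (very short) argument: for these five Hilbert series the weights of $M$ lie in neither configuration \ref{case A} nor \ref{case B}, hence no ideal variable appears as a pure power in the equations of $Y_2$, and the flip is genuine by the mechanism of Theorem \ref{it is a flip}. One small difference: the paper checks both configurations directly, whereas you only exclude \ref{case B} explicitly and dispose of the \ref{case A}-type possibility via the saturation observation made for \#20544; that fallback works, but the direct weight check is what the paper records.

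There is, however, a concrete geometric misstatement in your write-up. In case \ref{1>2=3=4} we have $d_1>d_2=d_3=d_4$, so the wall crossed by $\Psi_2$ is the ray $\rho_{y_1}$ generated by $y_1$ alone; the exceptional locus is $\mathbb{A}_2=\{y_2=y_3=y_4=0\}$ and $\alpha_2$ contracts it to the single point $P_{y_1}\in Z_2$, not to $\Proj^2_{y_2:y_3:y_4}$. That plane is the base of the \emph{last} map $\Phi'$, the conic bundle of Proposition \ref{last map is conic bundle}, one wall further along the Mori chamber structure. Consequently the planned analysis of a quadratic form on $\Proj^2_{y_2:y_3:y_4}$ modelled on Proposition \ref{quad form on Z2} is out of place: that argument is designed for $d_1=d_2$, where two distinct ideal variables share the top weight, while here the only thing to rule out is a pure $y_1^2$ in the equations of $Y_2$, which is precisely the \ref{case A}/\ref{case B} check. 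Once the base of $\Psi_2$ is correctly identified as $P_{y_1}$, the rest of your proof (elimination of $s$, of an orbinate via Lemma \ref{quasilinearity of entries}, the further unprojection equation from Lemma \ref{existence of mon tom}, the codimension and $-K$ intersection checks) is just Theorem \ref{it is a flip} applied verbatim, which is what the paper intends.
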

\begin{proof}
	The weights of $M$ are neither in case \ref{case A} nor \ref{case B}. Thus, none of the ideal variables appears as a pure power in the equations of $Y_2$. 
\end{proof}

The end of the link is a conic bundle over a plane $\Proj^2$ with coordinates $y_2, y_3, y_4$.

\begin{proposition} \label{last map is conic bundle}
	The map $\Phi'$ is a conic bundle over the projective plane $\Proj^2_{y_2, y_3, y_4}$.
\end{proposition}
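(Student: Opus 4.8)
The plan is to mirror the proofs of Theorem~\ref{thm dP fibr} and Lemma~\ref{general fibre of dP is smooth}, with the base $\Proj^1$ replaced by a plane. First I would rewrite the grading of the last rank $2$ toric variety $\F_3$ of the $2$-ray game, applying to \eqref{scroll for tom} the base change $\left(\begin{smallmatrix} 1 & d_2 \\ 0 & -1 \end{smallmatrix}\right)$ of the grading group, to get
\begin{equation*}
\left(
\begin{array}{c c c c c c | c c c}
t & s & x_1 & x_2 & x_3 & y_1 & y_2 & y_3 & y_4 \\
d_2 & r+d_2 & a & b & c & d_1-d_2 & 0 & 0 & 0 \\
-1 & -1 & 0 & 0 & 0 & 1 & 1 & 1 & 1
\end{array}
\right) \; .
\end{equation*}
Since $d_2 = d_3 = d_4$, the coordinates $y_2, y_3, y_4$ share the bidegree $\binom{0}{1}$ and the same $w\Proj^7$-weight, so they span the linear system $\left| \mathcal{O} \big( \substack{\scriptscriptstyle{0}\\\scriptscriptstyle{1}} \big) \right|$ and $\Phi'$ is the projection $\F_3 \to \Proj^2_{y_2,y_3,y_4}$ onto a genuine (unweighted) projective plane, with fibre coordinates $t, s, x_1, x_2, x_3, y_1$; this is a morphism because $\{ y_2 = y_3 = y_4 = 0\}$ is irrelevant in $\F_3$, and it restricts to a morphism on $Y_3$.

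Next I would describe the intersection of $Y_3$ with the general fibre $\Proj^5(d_2, r+d_2, a, b, c, d_1-d_2)$. Over a general point of the base the coordinates $y_2, y_3, y_4$ are nonzero constants, so the unprojection equation $s y_2 = g_2$ eliminates $s$ globally on the fibre, cutting it down to a weighted $\Proj^4$ in $t, x_1, x_2, x_3, y_1$. By Lemma~\ref{quasilinearity of entries} one pfaffian equation contains a monomial $x_i y_1$, which eliminates an $x_i$; by Lemma~\ref{existence of mon tom} a second unprojection equation contributes a monomial purely in $x_1, x_2, x_3$ (up to powers of $t$), giving one further linear elimination. What remains is the restriction of the quadratic pfaffian $\Pf_1(M)$ inside a plane, that is, a plane curve of degree $2$ --- a conic. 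That this intersection is genuinely a curve, neither empty nor a single point, follows exactly as in the proof of Theorem~\ref{it is a flip}, using that $P_s$ lies in the blow-up and that $Y_3$ is quasi-smooth and $\Q$-factorial. Relative $\Phi'$-ampleness of $-K_{Y_3}$ follows because $\{x_1 = 0\} \in \left| \mathcal{O}(-a K_{Y_3}) \right|$ meets every fibre positively, and smoothness of the general fibre follows as in Lemma~\ref{general fibre of dP is smooth}, since a singular or non-reduced generic conic would sweep out a singular curve inside the terminal $3$-fold $Y_3$. Hence $\Phi' \colon Y_3 \to \Proj^2_{y_2,y_3,y_4}$ is a conic bundle, of $(3,2)$-type.

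The step I expect to be the main obstacle is the bookkeeping at the end of the second paragraph: one must check that, after the eliminations forced by Lemmas~\ref{quasilinearity of entries} and~\ref{existence of mon tom}, the quadratic pfaffian genuinely restricts to an irreducible conic, and that no remaining equation either collapses the fibre to a point or leaves a curve of degree larger than $2$. This depends on the precise weights of $M$ --- the same data that determines whether the preceding flip $\Psi_2$ is an isomorphism (as for \#6865, where $M$ is of type~\ref{case B}, so $y_1^2$ enters the equations of $Y_2$) or a genuine flip --- and it is verified for each of the six Hilbert series in case~\ref{1>2=3=4} by the explicit computations recorded in \cite{BigTableLinks}.
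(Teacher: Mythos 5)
There is a genuine gap, and it sits exactly where the real content of the statement lies. Your overall strategy (fibre over $\Proj^2_{y_2,y_3,y_4}$, eliminate variables, exhibit a conic) matches the paper, but the eliminations you perform are not the right ones. Over a general point of the base the coordinates $y_2,y_3,y_4$ are units, while $t,s,x_1,x_2,x_3,y_1$ are fibre variables; in particular $y_1$ is \emph{not} invertible here. So the monomial $x_i y_1$ supplied by Lemma~\ref{quasilinearity of entries} is quadratic in the fibre variables and does not give a linear elimination of $x_i$ --- that elimination is legitimate only when one localises at the point $P_{y_1}$, as in the flip proofs (Theorem~\ref{it is a flip}), not over the two-dimensional base. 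Likewise, once $s$ has been eliminated via one unprojection equation, the remaining unprojection equations become consequences of the pfaffians (this is why the paper says to discard them), so your ``second unprojection equation'' cannot furnish a further elimination; and a monomial purely in $x_1,x_2,x_3$ of degree $>1$ would not be a linear elimination anyway. Finally, identifying the surviving equation with the quadratic pfaffian $\Pf_1(M)$ is wrong in general: in the paper's worked example \#24097 (Subsection~\ref{24097 T1}), $\Pf_1$ is itself used to eliminate $y_1$ over the patch $\{y_2\neq 0\}$, and the conic comes from $\Pf_4$ (with $\Pf_3$ a multiple of it and $\Pf_2$ vanishing).

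The paper's proof runs differently at precisely these points: eliminate $s$ globally and discard all unprojection equations, so that $\F_3$ becomes a weighted $\Proj^4$-bundle with fibre coordinates $t,x_1,x_2,x_3,y_1$; then, over each base point, two of the five pfaffians --- those linear in the fibre variables with coefficients in $y_2,y_3,y_4$ --- eliminate two fibre variables; and the remaining three pfaffians all lie in one principal ideal, generated by a conic in the three surviving fibre variables. You explicitly flag this last verification (that what is left is a single conic, neither a point nor something of higher degree) as ``the main obstacle'' and defer it to case-by-case computation, but that is exactly the assertion the proof must establish, so as written the proposal leaves the key step unproven. The closing remarks on relative ampleness and smoothness of the general fibre are harmless but not where the difficulty lies.
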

\begin{proof}
	Localise $\F_3$ at the plane $\Proj^2(d_2,d_2,d_2)_{y_2,y_3,y_4}$. Eliminate $s$ globally and discard the unprojection equations. We exclude $s$ from the grading of $\F_3$.
	\begin{equation*}
	\left(
	\begin{array}{c c c c c | c c c}
	t & x_1 & x_2 & x_3 & y_1 & y_2 & y_3 & y_4 \\
	d_2 & a & b & c & d_1-d_2 & 0 & 0 & 0 \\
	-1 & 0 & 0 & 0 & 1 & 1 & 1 & 1
	\end{array}
	\right) \; ,
	\end{equation*}
	so $\F_3$ is a weighted $\Proj^4$-bundle over $\Proj^2$. Above each point of $\Proj^2(d_2,d_2,d_2)_{y_2,y_3,y_4}$ we can locally eliminate two variables among $t, x_1, x_2, x_3, y_1$ via two of the pfaffian equations. The remaining three equations lie in the same principal ideal generated by one of them, which is a conic in the three surviving variables of the fibre with coefficients in the base $\Proj^2_{y_2, y_3, y_4}$.
\end{proof}

\subsection{Proof of \ref{1=2=3>4}}
In this case there are no flips occurring, and the links evolve as follows: $\psi_1$ is $n$ simultaneous flops by Theorem \ref{first step is a flop}, followed by a divisorial contraction $\Phi'$ to a Fano 3-fold $X'$ (by Lemma \ref{last map is div contr} and because  $d_4-d_1 < 0$).

\subsection{Proof of \ref{1=2=3=4}}
Here the first $n$ flops are followed by a conic bundle over the base $\Proj^3(d_1,d_1,d_1,d_1)_{y_1,y_2,y_3,y_4}$, and a similar statement to Proposition \ref{last map is conic bundle} holds.

\section{Examples} \label{examples}

In this section we present some explicit examples, highlighting the main phenomena described in Theorem \ref{T outputs of sarkisov links}. Recall that all the Fano 3-folds $X$ in this paper, and in particular in the examples of this section, can be explicitly constructed by means of Type I unprojections and are $\Q$-factorial following \cite{T&Jpart1}.

\subsection{Example for \ref{1>2>3>4}: \#10985, Tom$_1$} \label{10985 T1}

Consider $X \ni p$ where $X$ is the Tom type Fano 3-fold associated to the Hilbert series \#10985 and $p \in X$ is the Tom centre $\tfrac{1}{2}(1,1,1)$ in the basket of singularities $\mathcal{B}_X = \{ \tfrac{1}{2}(1,1,1), \tfrac{1}{6}(1,1,5)\}$. The ambient space of $X$ is $\Proj^7(1^3,2,3,4,5,6)$, with coordinates $x_1,x_2,x_3$, $s$, $y_4,y_3,y_2,y_1$ respectively. The divisor $D\cong \Proj_{x_1,x_2,x_3}(1,1,1)$ is defined by the ideal $I_D=\Span[y_1,y_2,y_3,y_4]$ and $D \subset Z_1$ for $M$ in Tom$_1$. There are 24 nodes on $D\subset Z_1$ (cf \cite{TJBigTable}).
To summarise, we are looking at the following varieties:
\begin{equation*}
\begin{array}{c c c c c}
\# 10985 & X & \subset \Proj^7(1^3,2,3,4,5,6) & \text{codimension } 4 & \{\tfrac{1}{2}(1,1,1), \tfrac{1}{6}(1,1,5)\} \\
\# 10962 & Z_1 & \subset \Proj^6(1^3,3,4,5,6) & \text{codimension } 3 & 24 \text{ nodes on $D$}
\end{array}
\end{equation*}

We aim to put ideal variables in an ideal entry having their same weight, and do analogously for the orbinates. The rest of the entries can be occupied by general polynomials in the given degrees, accordingly to the Tom$_1$ constraints. In this specific case, we end up with the following explicit matrix as in Section \ref{unprojection setup}
\begin{equation}
M=
\left(
\begin{array}{c c c c}
x_1 & -x_2 x_3 & -x_2^3 + y_4 & -x_3^4 + y_3 \\
& y_4 & y_3 & y_2 \\
& & x_2^2 y_4 - y_2 & y_1 \\
& & & x_1^4 y_4
\end{array}
\right) \; .
\end{equation}
The unprojection algorithm produces nine equations defining $X$. 
The blow-up $Y_1$ of $X$ at the Tom centre $p=P_s$ is contained in the rank 2 toric variety $\F_1$ with grading as in Proposition \ref{shape of F tom}, whose ray-chamber structure is described in Figure \ref{Mori cone 10985}.
\begin{figure}[H]
	\begin{tikzpicture}
	\draw[thick] (0,0) -- (0,1);
	\node [above left] at (0,1) {$t$};
	
	\draw[thick] (0,0) -- (2,1);
	\node [above left] at (2,1) {$s$};
	
	\draw[thick] (0,0) -- (1,0);
	\node [above right] at (1,0) {$x_1,x_2,x_3$};
	
	\draw[thick] (0,0) -- (6,-1);
	\node [above right] at (6,-1) {$y_1$};
	
	\draw[thick] (0,0) -- (5,-1);
	\node [below right] at (5,-1) {$y_2$};
	
	\draw[thick] (0,0) -- (4,-1);
	\node [below right] at (4,-1) {$y_3$};
	
	\draw[thick] (0,0) -- (3,-1);
	\node [below right] at (3,-1) {$y_4$};
	\end{tikzpicture}
	\caption{Mori cone of $\F_1$ for \#10985, Tom$_1$} \label{Mori cone 10985}
\end{figure}
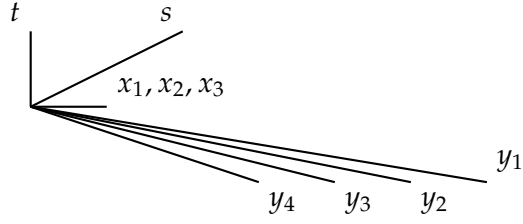

The Kawamata blow-up of the Tom centre $P_s$ is 
\begin{align}
\begin{split}
\Phi \colon \F_1 &\longrightarrow \Proj^7(1^3,2,3,4,5,6) \\
(t,s,x_1,x_2,x_3,y_1,y_2,y_3,y_4) &\longmapsto (x_1 t^{\frac{1}{2}}, x_2 t^{\frac{1}{2}}, x_3 t^{\frac{1}{2}}, y_4 t^{\frac{5}{2}}, y_3 t^{\frac{6}{2}}, y_2 t^{\frac{7}{2}}, y_1 t^{\frac{8}{2}}, s) \; .
\end{split} 
\end{align}

We record here only the pfaffian equations of $Y_1$. 
\begin{equation} \label{equations for Y >>>}
\begin{cases}	
t y_4^2 + x_1 x_2^2 y_4 - x_1 y_2 - x_2^3 y_4 + x_2 x_3 y_3=0 \\

-t y_4 y_3 - x_1 y_1 - x_2 x_3 y_2 + x_3^4 y_4 =0 \\

-t y_4 y_2 + t y_3^2 + x_1^5 y_4 + x_2^3 y_2 - t x_3^4 y_3 =0 \\

t y_4 y_1 + t y_3 y_2 + x_1^4 x_2 x_3 y_4 + x_1 x_2^2 y_1 + x_2^3 x_3 y_2 - x_2^3 y_1 - x_3^4 y_2 =0 \\

x_1^4 y_4^2 + x_2^2 y_4 y_2 - y_3 y_1 - y_2^2 =0
\end{cases}
\end{equation} 
From Theorem \ref{first step is a flop}, crossing the ray $\rho_{x_i}$ gives that $\Psi_1$ consists of 24 simultaneous flops based at the 24 nodes of $Z_1$. Since the weights of $M$ are in configuration \ref{case B}, then either $\psi_2$ or $\psi_3$ is an isomorphism by Proposition \ref{skipped flip in >>>}; $y_2$ appears as a pure power in \eqref{equations for Y >>>}, so $\psi_3$ is an isomorphism. To study $\psi_2$ we need to localise at $P_{y_1} \in Z_2$, so we look at the equations \ref{equations for Y >>>} locally analytically in a neighbourhood of the point $P_{y_1} \in Z_2$. Practically, $y_1$ is a local coordinate and we perform row operations on $\F_2$ in order to write the weight of $y_1$ as either $(\substack{\scriptscriptstyle{\pm1}\\\scriptscriptstyle{0}})$ or $(\substack{\scriptscriptstyle{0}\\\scriptscriptstyle{\pm1}})$. So, the grading of $\F_2$ becomes
\begin{equation*}
\left(
\begin{array}{c c c c c | c c c c}
t & s & x_1 & x_2 & x_3 & y_1 & y_2 & y_3 & y_4 \\
6 & 8 & 1 & 1 & 1 & 0 & -1 & -2 & -3 \\
1 & 1 & 0 & 0 & 0 & -1 & -1 & -1 & -1
\end{array}
\right) \; .
\end{equation*}
The flip $\Psi_2$ has weights $(6,8,1,1,1,-1,-2,-3)$; this stands for the contraction by $\alpha_2$ of $\Proj^4_{t,s,x_1,x_2,x_3}(6,8,1,1,1)$ to the point $P_{y_1} \in Z_2$, and the extraction by $\beta_2$ of $\Proj^2_{y_2,y_3,y_4}(1,2,3)$ from $P_{y_1}$. However, the intersection $\Proj^4_{t,s,x_2,x_3}(6,8,1,1,1) \cap Y_2$ can be a projective space smaller than $\Proj^4$. Analogously, this might hold for $\Proj^1_{y_2,y_4}(1,2,3)\cap Y_3$. The study of these intersections is done via the following argument.

Localising at the base of the isomorphism in codimension 1, $\Psi_i$, it is possible to write some of the variables as function of the others using the equations of $Y_i$. Examining the equations of $Y_2$ locally analytically at a neighbourhood of $P_{y_1} \in Z_2$ and considering $y_1$ as a local coordinate, we can set $y_1=1$ in the equations \eqref{equations for Y >>>}. Some linear monomials will emerge in the equations of $Y_2$ evaluated at $y_1=1$: those variables appearing linearly in $Y_2 {\big|}_{y_1=1}$ can be expressed in terms of the other variables locally analytically. Thus, we can locally eliminate them. In this specific case, the evaluation of \eqref{equations for Y >>>} at $y_1=1$ shows that $s, x_1, y_3$ appear linearly. Therefore, the weights of the flip for $Y_2$ are $(6,1,1,-1,-3)$, associated to the remaining variables $t, x_2, x_3, y_2,y_4$ respectively. Observe that it looks like that $\alpha_2$ contracts a 2-dimensional locus inside $Y_2$ to the point $P_{y_1}$, thus $\alpha_2$ does not seem like a small contraction, as required in flips. However, among the equations left after the local elimination process there is one involving $t$ and $y_4$: that is $\Pf_2=0$. This means that there is an equation cutting out the contracted locus by one dimension.

In conclusion, $\psi_2$ is a flip with weights $(6,1,1,-1,-3; 3)$, where the last 3 in this notation tracks the degree of the equation involving the monomial $t y_4$. In other words, a weighted projective space $\Proj_{t,x_2,x_3}(6,1,1)$ containing a hypersurface of degree 3 with coefficients in $\Proj_{y_2,y_4}(1,3)$ is flipped to $\Proj_{y_2,y_4}(1,3)$. In particular, a $\tfrac{1}{6}(1,1,5)$ singularity in $Y_2$ is contracted to $P_{y_1}$ via $\alpha_2$, and a $\tfrac{1}{3}(1,1,2)$ is extracted in $Y_3$ from $P_{y_1}$ via $\beta_2$. This is a hypersurface flip. Despite the fact that there are three surviving equations after the elimination process, the equation cutting out $\Proj_{t,x_2,x_3}(6,1,1)$ is only one: the other two are multiples of it, that is, $\Pf_2$ is the generator of the principal ideal of $Y_2$ on $\Proj_{t,x_2,x_3}(6,1,1)$. The map $\Psi_3$ based at $P_{y_2}$ defines a flip from $\F_3$ to $\F_4$, but its exceptional locus does not intersect $Y_3$, which is therefore not affected by $\Psi_3$. 
The last map of the link is $\Phi' \colon \F_4 \rightarrow \mathbb{G}_4$, defined by the linear system $\binom{4}{-1}$: it contracts the exceptional locus $\mathbb{E}' = \{ y_4 = 0\}$ to the point $P_{y_3} \in \mathbb{G}_4$. Explicitly, it is 
\begin{align}
\begin{split}
\Phi' \colon \F_4 &\longrightarrow \mathbb{G}_4 = \Proj^7(1,1,1,1,2,3,3,5) \\
(t,s,x_1,x_2,x_3,y_1,y_2,y_3,y_4) &\longmapsto (x_1 y_4, x_2 y_4, x_3 y_4, y_3, y_2 y_4, y_1 y_4^2, t y_4^3, s y_4^6) \; .
\end{split}
\end{align}

The exceptional locus $\mathbb{E}'$ is isomorphic to $\Proj^7(4,6,1,1,1,2,1)$ with coordinates $t,s,x_1,x_2,x_3,$ $y_1,y_2$ respectively: their weights are retrieved performing a localisation at $P_{y_3}$ as before. However, the intersection $\mathbb{E}' \cap Y_4$ is $\Proj^3(1,1,1,1)$, as we can eliminate the variables $t, s, y_1$ locally analytically in a neighbourhood of $P_{y_3}$.

We call $X'$ the push-forward $\Phi_*'(Y_4)$ of $Y_4$ via $\Phi'$. Practically, $y_4$ plays for $\Phi'$ the role that $t$ played for $\Phi$, being the extra variable needed to perform a blow-up: in this case, $\Phi'$ blows up the point $P_{y_3} \in X'$. The equations of $X'$ are therefore given by evaluating the equations of $Y_4$ at $y_4=1$. Observe that this shows that the variables $t$ and $s$ can be algebraically expressed as functions of the other variables: two equations of $Y_4{\big|}_{y_4 =1}$ are removed in order to perform this global elimination. 

Call $\varsigma_i$ for $i \in \{ 1, \dots, 8\}$ the coordinates of $\mathbb{G}_4$.
Since we globally eliminated $t,s$, then $X' \subset w\Proj' \subset \mathbb{G}_4$, where $w\Proj' := \Proj^5(1,1,1,1,2,3)$ with coordinates $\varsigma_1, \dots, \varsigma_6$. So, $\Phi'$ restricts to $\phi' \colon Y_4 \rightarrow X' \subset \Proj^5(1,1,1,1,2,3)$. The minimal basis of the ideal generated by the surviving equations of $Y_4{\big|}_{y_4 =1}$ give the explicit equations of $X'$, both of degree 4, are
\begin{equation}
\begin{cases}
\varsigma_1 \varsigma_2^2 \varsigma_4 - \varsigma_1 \varsigma_4 \varsigma_5 - \varsigma_1 \varsigma_6 - \varsigma_2^3 \varsigma_4 + \varsigma_2 \varsigma_3 \varsigma_4^2 - \varsigma_2 \varsigma_3 \varsigma_5 + \varsigma_3^4 = 0 \\
\varsigma_1^4 + \varsigma_2^2 \varsigma_5 - \varsigma_4 \varsigma_6 - \varsigma_5^2 =0
\end{cases} \; .
\end{equation}
In addition, it is possible to keep track of the singularities throughout the link. That is: $X$ has $\tfrac{1}{2}(1,1,1)$ and $\tfrac{1}{6}(1,1,5)$ singularities. After the blowup $\Phi$, $Y_1$ has only a singularity of type $\tfrac{1}{6}$: this holds for $Y_2$ too, as the basket does not change after the flops. The hypersurface flip $\Psi_2$ replaces $\tfrac{1}{6}(1,1,5)$ with $\tfrac{1}{3}(1,1,2)$, so $Y_3$ has one singularity of type $\tfrac{1}{3}$; same for $Y_4$, given that $Y_3$ and $Y_4$ are actually isomorphic. Lastly, $\phi'$ contracts a smooth locus, so the $\tfrac{1}{3}$ singularity of $Y_4$ is maintained in $X'$.

Now that we know the equations of $X'$ and their degrees, the basket of $X$ and its ambient space we deduce that $X'$ is a representative of the family \#16204 in \cite{grdb}, which is a Fano 3-fold in codimension 2.

\subsection{Example for \ref{1=2>3=4}: \#20652, Tom$_1$} \label{20652 T1}

Consider $X \ni p$ where $X$ is the Tom type Fano 3-fold associated to \#20652 and $p \in X$ is the Tom centre $\tfrac{1}{2}(1,1,1)$. Its ambient space is $\Proj^7(1^5,2^3)$, with coordinates $y_1,y_2,x_1$, $x_2$, $x_3,y_3,y_4,s$ respectively, $D\cong \Proj_{x_1,x_2,x_3}(1,1,1)\subset Z_1$ has 7 nodes, and $M$ is in Tom$_1$ format with entries 
\begin{equation}
M=
\left(
\begin{array}{c c c c}
x_1 & x_2 & x_3 & y_3 \\
& y_1 & y_2 & x_2 y_4 - x_3 y_3 + y_1 \\
& & x_1 y_3 - y_2 & y_4^2 - y_2 \\
& & & x_1 y_3 + y_1
\end{array}
\right) \; .
\end{equation}
\begin{equation*}
\begin{array}{c c c c c}
\# 20652 & X & \subset \Proj^7(1^5,2^3) & \text{codimension } 4 & \mathcal{B}_X = \{3 \times \tfrac{1}{2}(1,1,1)\} \\
\# 20543 & Z_1 & \subset \Proj^6(1^5,2^2) & \text{codimension } 3 & 7 \text{ nodes on $D$}
\end{array}
\end{equation*}
This time, the Mori cone of $\F_1$ is given by the fan in Figure \ref{Mori cone 20652}.
\begin{figure}[H]
	\begin{tikzpicture}
	\draw[thick] (0,0) -- (0,1);
	\node [above left] at (0,1) {$t$};
	
	\draw[thick] (0,0) -- (2,1);
	\node [above left] at (2,1) {$s$};
	
	\draw[thick] (0,0) -- (1,0);
	\node [above right] at (1,0) {$x_1,x_2,x_3$};
	
	\draw[thick] (0,0) -- (2,-1);
	\node [above right] at (2,-1) {$y_1,y_2$};
	
	\draw[thick] (0,0) -- (1,-1);
	\node [below right] at (1,-1) {$y_3,y_4$};
	\end{tikzpicture}
	\caption{Mori cone of $\F_1$ for \#20652, Tom$_1$} \label{Mori cone 20652}
\end{figure}
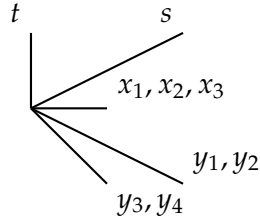

By Theorem \ref{first step is a flop}, $\Psi_1$ is given by 7 simultaneous flops.
The weights of $M$ are in configuration \ref{case A}, so there is a quadratic form determining two points $P_1, P_2$ in the intersection $Z_2 \cap \Proj^1_{y_1,y_2}$ (Proposition \ref{quad form on Z2}). Thus, Lemma \ref{d1 = d2 case A lemma} shows that the pencil of flips along the line $\Proj^1_{y_1,y_2} \subset \mathbb{G}_2$ restricts to two flips with base $P_1$ and $P_2$ respectively. So we look locally analytically in a neighbourhood of $P_1, P_2 \in Z_2$. Carrying out a similar calculation to the previous examples, we localise at $\Proj^1_{y_1,y_2} Z_2$. The weights of the flip of toric varieties based at $\Proj^1_{y_1,y_2}$ are $\left( 2,4,1,1,1,-1,-1 \right)$, where $\alpha_2$ contracts $\Proj^4_{t,s,x_1,x_2,x_3}(2,4,1,1,1)$ to $\Proj^1_{y_1,y_2}$, and $\beta_2$ extracts $\Proj^1_{y_3,y_4}$. We study the intersections $\Proj^4_{t,s,x_1,x_2,x_3}(2,4,1,1,1) \cap Y_2$ and $\Proj^1_{y_3,y_4} \cap Y_2$ locally analytically at a neighbourhood of $P_1$ and $P_2$ respectively. The first and second unprojection equations allow one to globally eliminate $s$ at $P_1$ and $P_2$. 
Similarly happens for $x_1$ using $\Pf_3(\alpha_1^*(M))$. On the other hand, we can use either $\Pf_4(\alpha_1^*(M))$ to eliminate $x_2$ at $P_1$, or $\Pf_2(\alpha_1^*(M))$ to eliminate $x_3$ at $P_2$. The intersection $\Proj^4_{t,s,x_1,x_2,x_3}(2,4,1,1,1) \cap Y_2$ is formed by two disjoint loci, generated by $t,x_2$ and $t,x_3$ at $P_1$ and $P_2$ respectively. Nonetheless, they determine two projective lines $\Proj^1(2,1)$. The fact that this elimination process has not excluded $y_3$ nor $y_4$ shows that $\Proj^1_{y_3,y_4} \subset Y_2$. The variable $t$ does not get eliminated because in $\Pf_1(\alpha_1^*(M))$ the polynomial $t \left( y_1^2 - y_1 y_2 + y_2^2 \right)$ appears: the variable $t$ could be eliminated only if $y_1^2 - y_1 y_2 + y_2^2 \not= 0$, but $P_1$ and $P_2$ are exactly the two solutions of $y_1^2 - y_1 y_2 + y_2^2 = 0$.

In conclusion, $\Psi_2$ restricts to two simultaneous Francia flips $( 2,1,-1,-1 )$ based at $P_1, P_2 \in Z_2$, as anticipated in Remark \ref{simultaneous flips}. In particular, two cyclic quotient singularities of type $\tfrac{1}{2}(1,1,1)$ in $Y_2$ are contracted to $P_1$ and $P_2$ respectively via $\alpha_2$, and $\beta_2$ extracts a smooth locus in $Y_3$. Therefore, $Y_3$ has Picard rank 2.

The last map of the link is the fibration $\Phi' \colon \F_4 \rightarrow \Proj^1_{y_3,y_4}$. Recall that  $-K_{Y_3} \sim \mathcal{O} \big( \substack{\scriptscriptstyle{1}\\\scriptscriptstyle{0}} \big)$. If $F$ is a general fibre of $\Phi'$, then $K_F = \left( K_{Y_3} + F \right) \big|_F = K_{Y_3} \big|_F$  by adjunction. Thus, $K_F$ is ample, $F$ a del Pezzo and, as a consequence, $\Phi'$ a del Pezzo fibration. The unprojection variable $s$ can be globally eliminated over each general point of $\Proj^1_{y_3,y_4}$. There is no other elimination that can be made. Therefore, the fibre $F$ of the del Pezzo fibration sits inside a projective space $\Proj^6$ with coordinates $t,x_1,x_2,x_3,y_1,y_2$. The matrix $M$ has become a matrix of linear forms in these variables. The equations of $F$ are the five (quadratic) maximal pfaffians of $M$. Therefore, the degree of $F$, and of the del Pezzo fibration, is 5.

\subsection{Example for \ref{1>2=3=4}: \#24097, Tom$_1$} \label{24097 T1}

Consider $X \ni p$ where $X \subset \Proj^7(1^6,2^2)$ is the Tom type Fano 3-fold \#24097, and $p \in X$ is the Tom centre $\tfrac{1}{2}(1,1,1)$. The coordinates of $\Proj^7(1^6,2^2)$ are $x_1,x_2,x_3$, $y_2, y_3,y_4,y_1,s$. The unprojection of $D\cong \Proj_{x_1,x_2,x_3}(1,1,1) \subset Z_1$ in Tom$_1$ format produces $X$, and there are 8 nodes on $D$. Here $Z_1$ is \#24077 defined by $M$: 
\begin{equation*}
M=
\left(
\begin{array}{c c c c}
x_1 & x_2 & x_3 & -y_2^2 - x_3 y_3 \\
& y_2 & y_3 & y_1 \\
& & y_4 & x_1 y_3 - y_4^2 \\
& & & -x_2 y_4 - x_3 y_4 + y_1
\end{array}
\right) \; .
\end{equation*}

After the 8 simultaneous flops of $\Psi_1$, the map $\Psi_2$ is a Francia flip $(2,1,-1,-1)$, and $\Phi' $ is a weighted $\Proj^5$-bundle over the projective space $\Proj^2_{y_2,y_3,y_4}(1,1,1)$. We show that $Y_3$ is a conic bundle over that base, and we compute its discriminant $\Delta$. Note that $Y_3$ is smooth. We record here the five pfaffian equations of $Y_3$.
\begin{equation*}
\begin{cases}
x_1 y_3^2 + x_2 y_2 y_4 + x_2 y_3 y_4 - x_1 y_4^2 - t y_3 y_4^2 - y_2 y_1 - y_4 y_1 =0 \\

x_1 x_3 y_3 + x_2^2 y_4 + x_2 x_3 y_4 + t^2 y_2^2 y_4 + t x_3 y_3 y_4 - t x_3 y_4^2 - x_2 y_1 =0 \\ 

t^2 y_2^2 y_3 + t x_3 y_3^2 + x_1 x_2 y_4 + x_1 x_3 y_4 - x_1 y_1 + x_3 y_1 =0 \\

t^2 y_2^3 - x_1^2 y_3 + t x_2 y_3^2 - t x_1 y_3 y_4 + t x_1 y_4^2 + x_2 y_1 =0 \\

x_3 y_2 - x_2 y_3 + x_1 y_4 =0 \\
\end{cases}
\end{equation*}

At a general point in $\Proj^2_{y_2,y_3,y_4}(1,1,1)$, it is possible to globally eliminate the variable $s$ thanks to the unprojection equations.

Now consider the line $\{ y_4 =0 \}$ in the base $\Proj^2_{y_2,y_3,y_4}(1,1,1)$, and look at its two affine patches $\{ y_2 \not=0 \}$ and $\{ y_3 \not=0 \}$. We want to study the conic equations above each of these patches: in fact, they both contribute to the discriminant $\Delta$.

Over the patch $\{ y_2 \not=0 \}$, $\Pf_5$ and $\Pf_1$ globally eliminate the variables $x_3$ and $y_1$ respectively: hence they are $x_3 = x_2 y_3$ and $y_1 = x_1 y_3^2$. Replace their expressions in the remaining three pfaffian equations, obtaining
\begin{equation*}
\begin{cases}
t^2 y_3 + t x_2 y_3^3 - x_1^2 y_3^2 + x_2 x_1 y_3^3 =0 \\  

x_1 x_2 y_3^2 - x_2 x_1 y_3^2 =0 \\  

t^2 - x_1^2 y_3 + t x_2 y_3^2 + x_2 x_1 y_3^2  =0 \\  
\end{cases} 
\end{equation*}
where $\Pf_2$ is identically zero, and $\Pf_3$ (above) is a multiple of $\Pf_4$ by a $y_3$ factor. Therefore, the conic that $\Pf_4$ describes is defined by the matrix 
\begin{equation*}
A_{y_2} = \left(
\begin{array}{c c c}
1 & 0 & \tfrac{1}{2}y_3^2 \\
0 & -y_3 & \tfrac{1}{2}y_3^2 \\
\tfrac{1}{2}y_3^2 & \tfrac{1}{2}y_3^2 & 0
\end{array}
\right)
\end{equation*}
as $(t,x_1,x_2) \cdot A_{y_2} \cdot (t,x_1,x_2)^T$.
Its determinant is $\det(A_{y_2}) = -\tfrac{1}{4}y_3^4 (1 + y_3)=0$. 

On the other hand, over the patch $\{ y_3 \not=0 \}$, $\Pf_1$ and $\Pf_5$ globally eliminate the variables $x_1$ and $x_2$ respectively: hence they are $x_1 = y_2 y_1$ and $x_2 = x_3 y_2$. Replace their expressions in the remaining three pfaffian equations: similarly to the other patch, the equation of the conic is $t^2 y_2^2 + t x_3 - y_2 y_1^2 + x_3 y_1=0$ given by $\Pf_3$. It is defined by the matrix $A_{y_3}$
\begin{equation*}
A_{y_3} = \left(
\begin{array}{c c c}
y_2^2 & \tfrac{1}{2} & 0 \\
\tfrac{1}{2} & 0 & \tfrac{1}{2} \\
0 & \tfrac{1}{2} & -y_2
\end{array}
\right)
\end{equation*}
determinant $\det(A_{y_3}) = -1/4 y_2 (1+y_2)$ and by the equation $(t,x_3,y_1) \cdot A_{y_3} \cdot (t,x_3,y_1)^T=0$.
Even though the contribution of $\det(A_{y_2})$ and $\det(A_{y_3})$ to the discriminant might look like $5+2=7$, the solutions to $\det(A_{y_2})=0$ and $\det(A_{y_3})=0$ overlap at the point $(-1,-1,0)$ which is counted twice. Therefore, $\Delta = 5+7-1=6$.

\subsection{\textbf{Comparison with Takagi}} \label{comparison with Takagi}

In \cite{Takagi}, the author classifies all possible extremal contractions $\Phi'$ appearing in sequences of flops and flips on $\Q$-factorial terminal Fano 3-folds $Y$ of Picard rank $\rho_Y =2$. These are Sarkisov links from certain $\Q$-Fano 3-folds $X$ with Picard rank 1 enjoying some additional properties (cf. \cite["Main Assumption 0.1"]{Takagi}). In particular, these varieties must have a singularity of type $\tfrac{1}{2}(1,1,1)$, that is blown up to initiate the link. Six of the varieties falling in Takagi's assumption are in codimension 4 and have a Type I centre. In particular, three of them are of Tom-type, and follow the description of Theorem \ref{T outputs of sarkisov links}. They are: \#24097 Tom$_1$ (above in Subsection \ref{24097 T1}, number 4.4 in Takagi's paper) falling in case $d_1=d_2=d_3<d_4$, \#20652 Tom$_1$ (above in \ref{20652 T1}, number 5.4) in case $d_1=d_2<d_3=d_4$, and \#16645 Tom$_1$ (number 2.2) in case $d_1<d_2=d_3=d_4$.

We have examined them here with our method, and we have showed that the outcomes predicted by Theorem \ref{T outputs of sarkisov links} match his results. The remaining three Hilbert series indicated by Takagi are of Jerry type. We omit their study from this paper.

\section{The Picard rank of certain codimension 4 terminal Fano 3-folds}
\label{section on Pic}

The Picard rank of quasi-smooth terminal Fano 3-folds in codimension 4 is unknown, except for some computational results in \cite{BrownFatighenti}. The construction carried out so far in this paper provides a tool to compute $\rho_X$ for certain Families of Fano 3-folds $X$ of Tom type in codimension 4. 

Theorem \ref{T outputs of sarkisov links} produces a birational link from each Fano 3-fold of Tom type.
Among 161 families of such Fano 3-folds, 96 have a link to another Fano 3-fold $X'$, and moreover $X$ and $X'$ have the same Picard rank. In 12 of these cases, $X'$ is quasi-smooth and so we may compute the rank directly. 
\begin{theorem} \label{Picard rank is 1}
	Let $X$ be a general Fano 3-fold of first Tom type and $p \in X$ its Tom centre. Suppose that the birational link for $X\ni p$ terminates with a quasi-smooth Mori fibre space $X' \rightarrow S$ with $\dim S =0$, that is, $X'$ is a Fano 3-fold. Then, the Picard rank of $X$ is $\rho_X = 1$. 
\end{theorem}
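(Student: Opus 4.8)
The plan is to push the Picard rank through the birational link of Theorem \ref{T outputs of sarkisov links}\ref{all links for Tom} and then to compute $\rho_{X'}$ directly from the quasi-smoothness of $X'$. By hypothesis the link for $X\ni p$ has the form
\[
X\;\xleftarrow{\;\Phi\;}\;Y_1\;\overset{\Psi_1}{\dashrightarrow}\;Y_2\;\overset{\Psi_2}{\dashrightarrow}\;Y_3\;\overset{\Psi_3}{\dashrightarrow}\;Y_4\;\xrightarrow{\;\Phi'\;}\;X',
\]
where $\Phi$ is the Kawamata blow-up of $p$, the $\Psi_i$ are the isomorphisms in codimension $1$ of Definition \ref{def Sarkisov link} (one flop and, possibly, flips), and $\Phi'$ is a divisorial contraction onto the quasi-smooth Fano $3$-fold $X'$; all of $Y_1,\dots,Y_4$ lie in the Mori category, so in particular they are $\Q$-factorial and terminal.

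First I would carry out the Picard rank bookkeeping. By Theorem \ref{kawamata thm} the exceptional divisor of $\Phi$ is the irreducible variety $E\cong\Proj(a,b,c)$, contracted to the single point $p$, so $\rho_{Y_1}=\rho_X+1$. Each $\Psi_i$ is an isomorphism in codimension $1$ between $\Q$-factorial varieties, hence induces an isomorphism of divisor class groups, so $\rho_{Y_1}=\rho_{Y_2}=\rho_{Y_3}=\rho_{Y_4}$. Finally, in each of the relevant cases of Theorem \ref{T outputs of sarkisov links}\ref{all links for Tom} the map $\Phi'$ contracts a \emph{single} irreducible divisor of $Y_4$ --- a weighted $\Proj^2$ in the $(2,0)$-cases, a weighted $\Proj^1$-bundle over a curve in the $(2,1)$-cases --- to a point or a curve of $X'$, so $\rho_{X'}=\rho_{Y_4}-1$. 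Chaining these equalities gives $\rho_X=\rho_{X'}$.

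Next I would show $\rho_{X'}=1$. By Lemma \ref{X' has smaller codim} the Fano $3$-fold $X'$ sits in its anticanonical embedding in codimension strictly smaller than $4$; in the twelve families at issue it is moreover a \emph{quasi-smooth} member of a family of codimension at most $2$, that is, a quasi-smooth weighted hypersurface or codimension-$2$ complete intersection $3$-fold in a weighted projective space. For such a variety a Lefschetz-type argument shows that $\Cl(X')\cong\Z$, generated by the restriction of the hyperplane class; since $X'$ is $\Q$-factorial and Fano its Picard rank satisfies $1\le\rho_{X'}\le\rk\Cl(X')=1$, so $\rho_{X'}=1$. (Equivalently, $X'\to S$ with $\dim S=0$ is by hypothesis a Mori fibre space, which forces $\rho_{X'}=1$.) Combining with $\rho_X=\rho_{X'}$ yields $\rho_X=1$.

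The delicate point is not the class-group computation for $X'$, which is classical, but the bookkeeping step: one must know that every intermediate $Y_i$ is genuinely $\Q$-factorial and terminal, so that the $\Psi_i$ really preserve $\rho$, and that $\Phi'$ is a genuine extremal divisorial contraction with irreducible exceptional locus, so that $\rho$ drops by exactly one. Both facts are built into the construction of the link in the Mori category in Sections \ref{Setting}--\ref{proof of main theorems}, so once that construction is granted the argument is short. A residual, case-by-case check is that in the twelve families concerned $X'$ is indeed quasi-smooth rather than merely Fano, which is precisely what allows its Picard rank to be read off as above.
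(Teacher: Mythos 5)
Your argument is essentially the paper's own proof: first establish $\rho_{X'}=1$ from quasi-smoothness of $X'$ together with its low codimension (Lemma \ref{X' has smaller codim}), then transfer the rank across the link, which extracts exactly one irreducible divisor (the Kawamata blow-up), contracts exactly one (the extremal contraction $\Phi'$), and is an isomorphism in codimension $1$ in between. Two small caveats. The paper's lemma only gives $\codim(X')\le 3$, not $\le 2$, so your Lefschetz-type computation of $\Cl(X')$ for quasi-smooth hypersurfaces and codimension-$2$ complete intersections does not by itself cover a possible codimension-$3$ (pfaffian) endpoint; the paper closes this by citing \cite[Proposition 2.3]{PizzatoSanoTasin} and \cite[Tables 1, 2, 3]{BrownFatighenti}, and your unproved claim that the twelve relevant families all end in codimension at most $2$ would need a case check. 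Also, the parenthetical shortcut ``$X'\to S$ is a Mori fibre space, hence $\rho_{X'}=1$'' is best avoided: that the endpoint really is a Mori fibre space (equivalently, that its Picard rank is $1$) is exactly what the construction establishes a posteriori, so invoking it here is circular in spirit, even if the theorem's wording formally permits it.
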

\begin{proof}
	Recall that $\Phi' \colon Y \rightarrow X'$ is an extremal divisorial contraction and $Y = Y_3, Y_4$ is a $\Q$-factorial Fano 3-fold. Hence, $X'$ is $\Q$-factorial.
	
	The Fano 3-fold $X'$ is quasi-smooth if the birational link for $X\ni p$ only involves toric flips and terminates with a divisorial contraction $\Phi'$ contracting the singular locus $\mathbb{E}'$ to a quasi-smooth point $p' \in X'$. 
	Since $\codim (X') \leq 3$ by Lemma~\ref{X' has smaller codim} and $X'$ is quasi-smooth, we apply \cite[Proposition 2.3]{PizzatoSanoTasin}, \cite[Tables 1, 2, 3]{BrownFatighenti} to conclude that $\rho_{X'}=1$. The birational link extracts and contracts exactly one irreducible divisor, and is otherwise an isomorphism in codimension 1. Therefore $\rho_X = \rho_{X'}=1$.
\end{proof}
In particular, in the hypotheses of Theorem \ref{Picard rank is 1} the link is a Sarkisov link. We expect that the hypotheses of quasi-smoothness of $X'$ can be lifted, and that Theorem \ref{Picard rank is 1} can be generalised to the rest of the 96 Tom families.
The Fano 3-fold in codimension 4 having Picard rank 1 are summarised in Table \ref{table of pic 1} together with their formats, Euler characteristic $e(X)$, and Hodge number $h^{2,1}(X)$, calculated using \cite[Theorem 4 and Table 3]{BrownFatighenti}. Note that \#11125 has two different birational links with ending with a quasi-smooth $X'$, as reported in Table \ref{table of pic 1}. Moreover, some of the Fano 3-folds in Table \ref{table of pic 1} admit other links to non-quasi-smooth Fano 3-folds, which therefore have Picard rank 1. This constitutes a further evidence that Theorem \ref{Picard rank is 1} could still hold for non-quasi-smooth $X'$.

\begin{table}[htb] 	
	\begin{tabular}{| c | c | c | c | c | c |}
		\hline
		GRDB ID & Embedding & Format & $e(X)$ & $h^{2,1}(X)$ & $\rho_X$ \\
		\hline
		\hline
		1169 & $\Proj^7(1,2,3,4,5,7^2,9)$ & T$_1$ & -38 & 21 & 1 \\
		\hline
		1253 & $\Proj^7(1,2,3,4^2,5^2,7)$ & $T_1$ & -24 & 14 & 1 \\
		\hline
		4925 & $\Proj^7(1^2,3,4,5,6,7^2)$ & T$_1$ & -56 & 30 & 1 \\
		\hline
		5177 & $\Proj^7(1^2,2,3,4,5^2,6)$ & T$_1$ & -48 & 26 & 1 \\
		\hline
		5279 & $\Proj^7(1^2,2,3^2,4,5^2)$ & T$_1$, $\frac{1}{5}$ & -38 & 21 & 1 \\
		\hline
		5305 & $\Proj^7(1^2,2,3^2,4^2,5)$ & T$_1$, $\frac{1}{5}$ &-36 & 20 & 1 \\
		\hline
		5963 & $\Proj^7(1^2,2^2,3^3,5)$ & T$_1$, $\frac{1}{3}$ &-28 & 16 & 1 \\
		\hline
		11005 & $\Proj^7(1^3,2,3^2,4,5)$ & T$_1$ & -46 & 25 & 1 \\
		\hline
		11125 & $\Proj^7(1^3,2^2,3^2,4)$ & T$_1$, $\frac{1}{2}$ & -32 & 18 & 1 \\
		\hline
		11125 & $\Proj^7(1^3,2^2,3^2,4)$ & T$_2$, $\frac{1}{4}$ & -32 & 18 & 1 \\
		\hline
		11455 & $\Proj^7(1^3,2^3,3^2)$ & T$_1$, $\frac{1}{3}$ & -22 & 13 & 1 \\
		\hline
		16339 & $\Proj^7(1^4,2^3,3)$ & T$_1$, $\frac{1}{2}$ & -22 & 13 & 1 \\
		\hline
	\end{tabular}
	\caption{Fano 3-folds in codimension 4 with Picard rank 1}\label{table of pic 1}
\end{table}

\section{Appendix} \label{appendix}

\subsection{Papadakis' algorithm for unprojection} \label{Papadakis' algorithm}

In \cite{PapadakisComplexes} the author explicitly builds the Type I unprojection equations from a codimension 3 Fano 3-fold $Z$ in Tom format. Here we briefly retrace the steps of Papadakis' construction, combining the two notations. Suppose for simplicity that the matrix $M$ is in format Tom$_1$. For $D \cong \Proj_{x_1,x_2,x_3}(a,b,c)$ the divisor in $Z$, and $I_D :=\Span[y_1,y_2,y_3,y_4]$, the graded matrix $M$ is 
\begin{equation} \label{general form T1} M =
\left(
\begin{array}{c c c c}
p_1 & p_2 & p_3 & p_4 \\
& a_{23} & a_{24} & a_{25} \\
& & a_{34} & a_{35} \\
& & & a_{45}
\end{array}
\right) \; .
\end{equation} 
Here the $a_{ij}$ are polynomials of the form $ a_{ij} := \sum_{k=1}^{4} \alpha_{ij}^k y_k $ for some polynomial coefficients~$\alpha_{ij}^k$. The $a_{ij}$ are in the ideal $I_D$. Instead, $p_j$ are not in $I_D$, in accordance to Definition \ref{Tom definition}. Only in this Appendix, we calculate $\Pf_i$ by excluding the $(i+1)$-th row and the $(i+1)$-th column for $i \in \{0,1,2,3,4\}$. Only $\Pf_1, \dots, \Pf_4$ are linear in the $y_i$; hence, there exists a unique matrix $Q$ such that $ \left( \Pf_1(M), \dots, \Pf_4(M) \right)^T = Q \left( y_1, \dots, y_4 \right)^T $. Explicitly, $ Q = \left( \Pf_i(N_j) \right)_{i,j=1 \dots 4}$
\begin{comment} 
\end{comment}
where 
\begin{equation*} %\label{Ni}
N_i =
\left(
\begin{array}{c c c c}
p_1 & p_2 & p_3 &  p_4 \\
& \alpha^i_{23} & \alpha^i_{24} & \alpha^i_{25} \\
& & \alpha^i_{34} & \alpha^i_{35} \\
& & & \alpha^i_{45}
\end{array}
\right) \; .
\end{equation*}
and $\alpha^i_{kl}$ is the coefficient of $y_i$ in $a_{kl}$. Define $H_i$ as the vector of length 4 whose $i$-th entry is $(-1)^{i+1}$ times the determinant of the submatrix of $Q$ obtained by removing the $i$-th column and the $i$-th row. F or all $i,j \in \{ 1, \dots, 4\}$, the vectors $H_i$ satisfy
\begin{equation} \label{H_i independent on i}
p_i H_j = p_j H_i
\end{equation}
(cf Lemma 5.3 of \cite{PapadakisComplexes}). Thus, the quotient $H_i / p_i$ is independent of $i$. The polynomials $g_1, \dots, g_4$ are defined via the following equality of vectors of length 4 $(g_1, g_2,g_3,g_4) = H_i /p_i$.
For instance, $g_1$ is the determinant of the matrix obtained deleting the first column and the first row of $Q$ divided by $p_1$, i.e.
\begin{equation} \label{def of unproj in Pap}
g_1 = \frac{1}{p_1} \det \left(
\begin{array}{c c c}
\Pf_2(N_2) & \Pf_2(N_3) & \Pf_2(N_4) \\
\Pf_3(N_2) & \Pf_3(N_3) & \Pf_3(N_4) \\
\Pf_4(N_2) & \Pf_4(N_3) & \Pf_4(N_4)
\end{array}
\right) \; .
\end{equation}
The $g_j$ are the right hand sides of the unprojection equations, that is, the unprojection equations defining $X$ are $s y_j = g_j$ for $j=1, \dots, 4$.

\bibliography{Tom_Sarkisov_links_new}

\newcommand{\etalchar}[1]{$^{#1}$}
\begin{thebibliography}{BCDP21}

\bibitem[ABR02]{AltinokBrownReidK3}
S.~Alt{\i}nok, G.~Brown, and M.~Reid.
\newblock Fano 3-folds, {$K3$} surfaces and graded rings.
\newblock In {\em Topology and geometry: commemorating {SISTAG}}, volume 314 of
  {\em Contemp. Math.}, pages 25--53. Amer. Math. Soc., Providence, RI, 2002.

\bibitem[Ahm17]{HamidPliabilityCox}
H.~Ahmadinezhad.
\newblock On pliability of del {P}ezzo fibrations and {C}ox rings.
\newblock {\em J. Reine Angew. Math.}, 723:101--125, 2017.

\bibitem[AO18]{AhmadinezhadOkadaPfaff}
H.~Ahmadinezhad and T.~Okada.
\newblock Birationally rigid {P}faffian {F}ano 3-folds.
\newblock {\em Algebr. Geom.}, 5(2):160--199, 2018.

\bibitem[AZ16]{AhmadinezhadZucconiCI}
H.~Ahmadinezhad and F.~Zucconi.
\newblock Mori dream spaces and birational rigidity of {F}ano 3-folds.
\newblock {\em Adv. Math.}, 292:410--445, 2016.

\bibitem[AZ17]{AhmadinezhadZucconiCircles}
H.~Ahmadinezhad and F.~Zucconi.
\newblock Circle of {S}arkisov links on a {F}ano 3-fold.
\newblock {\em Proc. Edinb. Math. Soc. (2)}, 60(1):1--16, 2017.

\bibitem[BCDP21]{BlancCheltsovDuncanProkhorov}
J.~Blanc, I.~Cheltsov, A.~Duncan, and Y.~Prokhorov.
\newblock Birational self-maps of threefolds of (un)-bounded genus or gonality.
\newblock {\em arXiv preprint arXiv:1905.00940, to appear in Amer. J. Math.},
  2021.

\bibitem[BCZ04]{BCZ}
G.~Brown, A.~Corti, and F.~Zucconi.
\newblock Birational geometry of 3-fold {M}ori fibre spaces.
\newblock In {\em The {F}ano {C}onference}, pages 235--275. Univ. Torino,
  Turin, 2004.

\bibitem[BF20]{BrownFatighenti}
G.~Brown and E.~Fatighenti.
\newblock Hodge numbers and deformations of {F}ano 3-folds.
\newblock {\em Doc. Math.}, 5:267--307, 2020.

\bibitem[BK{\etalchar{+}}]{grdb}
G.~Brown, A.~M. Kasprzyk, et~al.
\newblock Graded {R}ing {D}atabase.
\newblock {\em Online. Access via \url{http://www. grdb. co. uk}}.

\bibitem[BKQ18]{brownP2xP2}
G.~Brown, A.~M. Kasprzyk, and M.~I. Qureshi.
\newblock Fano 3-folds in {$\Bbb P^2\times \Bbb P^2$} format, {T}om and
  {J}erry.
\newblock {\em Eur. J. Math.}, 4(1):51--72, 2018.

\bibitem[BKR12a]{T&Jpart1}
G.~Brown, M.~Kerber, and M.~Reid.
\newblock Fano 3-folds in codimension 4, {T}om and {J}erry. {P}art {I}.
\newblock {\em Compos. Math.}, 148(4):1171--1194, 2012.

\bibitem[BKR12b]{TJBigTable}
G.~Brown, M.~Kerber, and M.~Reid.
\newblock Tom and {J}erry table, part of "{F}ano 3-folds in codimension 4,
  {T}om and {T}erry. {P}art {I}".
\newblock {\em Compositio Mathematica}, 148(4):1171--1194, 2012.

\bibitem[BZ10]{BrownZucconi}
G.~Brown and F.~Zucconi.
\newblock Graded rings of rank 2 {S}arkisov links.
\newblock {\em Nagoya Math. J.}, 197:1--44, 2010.

\bibitem[Cam20]{BigTableLinks}
L.~Campo.
\newblock Big {T}able {L}inks.
\newblock {\em Online. Access via
  \url{http://www.grdb.co.uk/files/fanolinks/BigTableLinks.pdf}}, 2020.

\bibitem[CLO15]{CoxLittleOSheaIdVarAlgorithms}
D.~A. Cox, J.~Little, and D.~O'Shea.
\newblock {\em Ideals, varieties, and algorithms}.
\newblock Undergraduate Texts in Mathematics. Springer, fourth edition, 2015.
\newblock An introduction to computational algebraic geometry and commutative
  algebra.

\bibitem[CLS11]{CoxToricVarieties}
D.~A. Cox, J.~B. Little, and H.~K. Schenck.
\newblock {\em Toric varieties}, volume 124 of {\em Graduate Studies in
  Mathematics}.
\newblock American Mathematical Society, Providence, RI, 2011.

\bibitem[CM04]{CortiMella}
A.~Corti and M.~Mella.
\newblock Birational geometry of terminal quartic 3-folds. {I}.
\newblock {\em Amer. J. Math.}, 126(4):739--761, 2004.

\bibitem[Cor95]{Corti95}
A.~Corti.
\newblock Factoring birational maps of threefolds after {S}arkisov.
\newblock {\em J. Algebraic Geom.}, 4(2):223--254, 1995.

\bibitem[CP17]{CheltsovParkRigidHypersurfaces}
I.~Cheltsov and J.~Park.
\newblock Birationally rigid {F}ano threefold hypersurfaces.
\newblock {\em Mem. Amer. Math. Soc.}, 246(1167):v+117, 2017.

\bibitem[CPR00]{CPR}
A.~Corti, A.~Pukhlikov, and M.~Reid.
\newblock Fano {$3$}-fold hypersurfaces.
\newblock In {\em Explicit birational geometry of 3-folds}, volume 281 of {\em
  London Math. Soc. Lecture Note Ser.}, pages 175--258. Cambridge Univ. Press,
  Cambridge, 2000.

\bibitem[HM13]{HaconMcKernan}
C.~D. Hacon and J.~McKernan.
\newblock The {S}arkisov program.
\newblock {\em J. Algebraic Geom.}, 22(2):389--405, 2013.

\bibitem[IF00]{IanoFletcher}
A.~R. Iano-Fletcher.
\newblock Working with weighted complete intersections.
\newblock In {\em Explicit birational geometry of 3-folds}, volume 281 of {\em
  London Math. Soc. Lecture Note Ser.}, pages 101--173. Cambridge Univ. Press,
  Cambridge, 2000.

\bibitem[IP96]{IskovskikhPukhlikov}
V.~A. Iskovskikh and A.~V. Pukhlikov.
\newblock Birational automorphisms of multidimensional algebraic manifolds.
\newblock volume~82, pages 3528--3613. 1996.
\newblock Algebraic geometry, 1.

\bibitem[Kaw96]{kawamata1996divisorial}
Y.~Kawamata.
\newblock Divisorial contractions to {$3$}-dimensional terminal quotient
  singularities.
\newblock In {\em Higher-dimensional complex varieties ({T}rento, 1994)}, pages
  241--246. de Gruyter, Berlin, 1996.

\bibitem[Oka14]{OkadaCI}
T.~Okada.
\newblock Birational {M}ori fiber structures of {$\Bbb Q$}-{F}ano 3-fold
  weighted complete intersections.
\newblock {\em Proc. Lond. Math. Soc. (3)}, 109(6):1549--1600, 2014.

\bibitem[Pap04]{PapadakisComplexes}
S.~A. Papadakis.
\newblock Kustin-{M}iller unprojection with complexes.
\newblock {\em J. Algebraic Geom.}, 13(2):249--268, 2004.

\bibitem[PR04]{PapadakisReidKM}
S.~A. Papadakis and M.~Reid.
\newblock Kustin-{M}iller unprojection without complexes.
\newblock {\em J. Algebraic Geom.}, 13(3):563--577, 2004.

\bibitem[PST17]{PizzatoSanoTasin}
M.~Pizzato, T.~Sano, and L.~Tasin.
\newblock Effective nonvanishing for {F}ano weighted complete intersections.
\newblock {\em Algebra Number Theory}, 11(10):2369--2395, 2017.

\bibitem[Rei80]{ReidCanonical3folds}
M.~Reid.
\newblock Canonical {$3$}-folds.
\newblock In {\em Journ\'{e}es de {G}\'{e}ometrie {A}lg\'{e}brique d'{A}ngers,
  {J}uillet 1979/{A}lgebraic {G}eometry, {A}ngers, 1979}, pages 273--310.
  Sijthoff \& Noordhoff, Alphen aan den Rijn---Germantown, Md., 1980.

\bibitem[Tak02]{Takagi}
H.~Takagi.
\newblock On classification of {$\Bbb Q$}-{F}ano 3-folds of {G}orenstein index
  2. {I}, {II}.
\newblock {\em Nagoya Math. J.}, 167:117--155, 157--216, 2002.

\end{thebibliography}
\bibliographystyle{alpha}

\end{document}